\documentclass[letterpaper,11pt]{amsart}
%Packages
\usepackage[margin=1.2in]{geometry}
\usepackage{amsmath,amsthm,amssymb}
\usepackage{xspace,xcolor}
\usepackage[breaklinks,colorlinks,citecolor=teal,linkcolor=teal,urlcolor=teal,pagebackref,hyperindex]{hyperref}
\usepackage[alphabetic]{amsrefs}
\usepackage[all]{xy}
\usepackage[english]{babel}
\usepackage{enumitem}
\usepackage{tikz,xcolor}
\usepackage{tikz-cd}
\usepackage{mathrsfs}
\usepackage{color}

\setlength{\parskip}{.05 in}

% For the introduction
\newcounter{intro}

\newtheorem{intro-conjecture}[intro]{Conjecture}
\newtheorem{intro-corollary}[intro]{Corollary}
\newtheorem{intro-theorem}[intro]{Theorem}

% Nicer hyperlinks
\newcommand{\theoremref}[1]{\hyperref[#1]{Theorem~\ref*{#1}}}
\newcommand{\sectionref}[1]{\hyperref[#1]{Section~\ref*{#1}}}
\newcommand{\lemmaref}[1]{\hyperref[#1]{Lemma~\ref*{#1}}}
\newcommand{\definitionref}[1]{\hyperref[#1]{Definition~\ref*{#1}}}
\newcommand{\propositionref}[1]{\hyperref[#1]{Proposition~\ref*{#1}}}
\newcommand{\conjectureref}[1]{\hyperref[#1]{Conjecture~\ref*{#1}}}
\newcommand{\corollaryref}[1]{\hyperref[#1]{Corollary~\ref*{#1}}}
\newcommand{\exampleref}[1]{\hyperref[#1]{Example~\ref*{#1}}}
\newcommand{\remarkref}[1]{\hyperref[#1]{Remark~\ref*{#1}}}
\newcommand{\itemref}[1]{\hyperref[#1]{Item~\ref*{#1}}}
\newcommand{\equationref}[1]{\hyperref[#1]{Equation~(\ref*{#1})}}
\newcommand{\formularef}[1]{\hyperref[#1]{Formula~(\ref*{#1})}}
\newcommand{\conditionref}[1]{\hyperref[#1]{Condition~(\ref*{#1})}}

%subsection{Theorem environments}

\theoremstyle{plain}
\newtheorem{thm}{Theorem}[section]

\newtheorem{lem}[thm]{Lemma}
\newtheorem{prop}[thm]{Proposition}
\newtheorem{cor}[thm]{Corollary}

\theoremstyle{definition}
\newtheorem{defi}[thm]{Definition}

\newtheorem{eg}[thm]{Example}

\theoremstyle{remark}
\newtheorem{rmk}[thm]{Remark}

%\numberwithin{equation}{section}

%\renewcommand{\labelenumi}{(\alph{enumi})}

%subsection{Macros}

\def\Mustata{Mus\-ta\-\c{t}\u{a}\xspace}

\def\N{{\mathbf N}}
\def\Z{{\mathbf Z}}
\def\Q{{\mathbf Q}}

\def\C{{\mathbf C}}

\def\A{{\mathbf A}}

\def\cB{\mathcal{B}}

\def\cD{\mathcal{D}}
\def\cE{\mathcal{E}}

\def\cH{\mathcal{H}}
\def\cI{\mathcal{I}}
\def\cJ{\mathcal{J}}
\def\cK{\mathcal{K}}

\def\cM{\mathcal{M}}
\def\cN{\mathcal{N}}
\def\cO{\mathcal{O}}

\def\cQ{\mathcal{Q}}

\def\cT{\mathcal{T}}

\def\fra{\mathfrak{a}}
\def\frb{\mathfrak{b}}

\def\frm{\mathfrak{m}}

\def\.{\cdot}
\def\^{\widehat}

\def\de{\partial}

\def\({\left(}
\def\){\right)}

\renewcommand{\and}{ \ \ \text{ and } \ \ }

\begin{document}

\title[Multiplier modules, $V$-filtrations and B-S polynomials on singular varieties]{Multiplier modules, $V$-filtrations and Bernstein-Sato polynomials on singular ambient varieties}

\author[B.~Dirks]{Bradley Dirks}

\address{Department of Mathematics, Stony Brook University, Stony Brook, NY 11794-3651, USA}

\email{bradley.dirks@stonybrook.edu}
\thanks{The author was partially supported by NSF-MSPRF grant DMS-2303070.}

\subjclass[2020]{14F10, 14B05, 14J17}

\maketitle

\begin{abstract} We show that the relation between multiplier ideals and $V$-filtration on the structure sheaf due to Budur-Musta\c{t}\u{a}-Saito generalizes to singular irreducible varieties, by replacing multiplier ideals with multiplier modules and the structure sheaf with the intersection complex Hodge module.

This is applied to a Skoda theorem for such modules as well as a $\mathcal D$-module theoretic proof of Ajit's formula relating the multiplier modules of an ideal to those of the Rees parameter in the extended Rees algebra.

Moreover, we define a Bernstein-Sato polynomial for the pair of a variety and an ideal sheaf on it. We relate the roots to the jumping numbers of the multiplier modules. If the ideal is generated by a regular sequence on a rational homology manifold, we show that the absence of integer roots of the polynomial implies that the subvariety defined by the ideal is a rational homology manifold.
\end{abstract}

\section{Introduction}
For a smooth complex variety $X$, the $V$-filtration of Kashiwara-Malgrange \cites{Kashiwara,Malgrange}  on a holonomic $\cD_X$-module along a function $f\in \cO_X(X)$ is an incredibly important construction in the theory of $\cD$-modules, mixed Hodge modules, and singularities. It is the fundamental construction which allows for a $\cD$-module theoretic incarnation of the nearby and vanishing cycles functors \cite{DeligneVC}. It is also intimately related with the theory of Bernstein-Sato polynomials \cite{SabbahOrder}.

Aside from the relation to the Bernstein-Sato polynomial, the most direct application of $V$-filtrations to the study of singularities is the fundamental result of Budur-Saito \cite{BS}: if $\Gamma \colon X \to X\times \A^1_t$ is the graph embedding and $\Gamma_+ \cO_X$ is the pushforward as a $\cD_X$-module, then $V^\bullet \Gamma_+\cO_X$ induces in a natural way a filtration on $\cO_X$ (due to the definition of the $\cD$-module pushforward, see \exampleref{eg-Kashiwara} below) and we have the following \cite{BS}*{Thm. 1}:
\[ \cJ(X,f^{\alpha-\varepsilon}) = V^\alpha \cO_X \text{ for all } \alpha > 0 \text{ and } 0 < \varepsilon \ll 1.\]

Here the left hand side is the multiplier ideal of $f$ with exponent $\alpha-\varepsilon$, defined through numerical data on a log resolution of the pair $(X,f)$ (a brief review on the definition is in Section \ref{sec-prelim} below). This relation was influential in Saito's definition of microlocal multiplier ideals \cite{SaitoHodgeIdeal} and their comparison to the Hodge ideals of \Mustata-Popa \cite{MP2}. Another proof of this equality is given in the forthcoming \cite{DY}*{Lem. 5.9}.

In fact, the same is true for an ideal sheaf: if $\fra = (f_1,\dots, f_r) \subseteq \cO_X$ is an ideal sheaf, with $X$ smooth, then \cite{BMS}*{Thm. 1} shows that
\[ \cJ(X,\fra^{\alpha-\varepsilon}) = V^\alpha \cO_X \text{ for all } \alpha >0 \text{ and } 0 < \varepsilon \ll 1,\]
where now the $V$-filtration on $\cO_X$ is defined by the $V$-filtration on $\Gamma_+ \cO_X$, and $\Gamma \colon X \to X\times \A^r_t$ is the graph embedding along $f_1,\dots, f_r$. As above, $\cJ(X,\fra^{\alpha-\varepsilon})$ is the multiplier ideal of $\fra$ with exponent $\alpha - \varepsilon$, whose definition will also be reviewed in Section \ref{sec-prelim} below. It is an important singularity invariant of the pair $(X,\fra)$ corresponding to singularity classes in the minimal model program.

The main goal of this note is to explain how this result also holds for singular (reduced and irreducible) $X$, with appropriate modifications. To pass to the singular case, even the statement requires the theory of mixed Hodge modules, which we review briefly in Section \ref{sec-prelim} below. As $X$ is singular, we replace $\cO_X$ (which need not underlie a mixed Hodge module) with ${\rm IC}_X^H$, the intersection complex Hodge module on $X$. However, as the $V$-filtration is only defined for $\cD$-modules on smooth varieties, we will have to make use of an auxiliary embedding of $X$ into a smooth variety $Y$, though we will see that the result doesn't depend on this embedding.

Let $X$ be a reduced, irreducible variety. Given an embedding $i\colon X \to Y$, we can define $i_* {\rm IC}_X^H$ a pure Hodge module on $Y$ with underlying filtered \emph{right} $\cD_Y$-module denoted $(\cM,F)$. We assume $f_1,\dots, f_r \in \cO_X(X)$ are restricted from $Y$, which is true up to replacing $X$ by an open subset. Then we can define the graph embedding $\Gamma \colon Y \to Y \times \A^r_t$ and the module $\Gamma_+ \cM$ admits a $V$-filtration along $t_1,\dots, t_r$. We index our $V$-filtrations on right $\cD$-modules following the convention for left $\cD$-modules (see the discussion in Section \ref{sec-prelim} below), to more closely match the multiplier ideal conventions.

In the above, the results were stated using multiplier ideals. However, as we do not want to impose singularity conditions on $X$, we will state our main result with the \emph{multiplier module} $\cJ(\omega_X,\fra^{\lambda-\varepsilon})$, which is well-defined for any non-zero ideal sheaf on a complex algebraic variety. The definition of this module will be reviewed in Section \ref{sec-prelim} below.

The main result is the following:
\begin{thm} \label{thm-main} Assume $X$ is reduced and irreducible. For $(\cM,F)$ the filtered right $\cD_Y$-module underlying ${\rm IC}^H_X$ and $\Gamma \colon Y \to Y \times \A^r_t$ the graph embedding along generators $f_1,\dots, f_r$ of the non-zero ideal $\fra$, we have for all $\lambda > 0$ the equality
\[ \cJ(\omega_X,\fra^{\lambda-\varepsilon}) = F_{-\dim X} V^\lambda \Gamma_+ \cM \text{ for all } 0 < \varepsilon \ll 1.\]
\end{thm}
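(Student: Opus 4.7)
The plan is to reduce to the smooth Budur-Musta\c t\u a-Saito theorem via a log resolution and then descend using Saito's strictness results for proper direct images of mixed Hodge modules with respect to both the Hodge filtration and the Kashiwara-Malgrange $V$-filtration. Concretely, choose an embedded strong log resolution $\mu \colon Y' \to Y$ so that the strict transform $X' \subseteq Y'$ of $X$ is smooth, $\pi := \mu|_{X'} \colon X' \to X$ is a resolution of singularities, and $\fra \cdot \cO_{X'} = \cO_{X'}(-F)$ for a simple normal crossings divisor $F$. By the log-resolution formula for multiplier modules,
\[ \cJ(\omega_X, \fra^{\lambda - \varepsilon}) = \pi_* \omega_{X'}\bigl(-\lfloor (\lambda - \varepsilon) F \rfloor\bigr). \]
On the smooth $X'$, apply Budur-Musta\c t\u a-Saito to the ideal $\fra \cdot \cO_{X'}$ (generated by $f_i \circ \pi$) and the graph embedding $\Gamma'' \colon X' \to X' \times \A^r_t$. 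Using that the pulled-back ideal is principal with SNC support, the resulting multiplier ideal simplifies, and after converting to right $\cD$-modules by tensoring with $\omega_{X'}$ we obtain
\[ F_{-\dim X}\, V^\lambda\, \Gamma''_+ \omega_{X'} \;=\; \omega_{X'}\bigl(-\lfloor (\lambda - \varepsilon) F \rfloor\bigr). \]

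Let $i' \colon X' \hookrightarrow Y'$ be the closed immersion and $\cM' := i'_+ \omega_{X'}$ be the right $\cD_{Y'}$-module underlying ${\rm IC}^H_{X'}$; if $\Gamma' \colon Y' \to Y' \times \A^r_t$ is the graph of $f_i \circ \mu$, then $\Gamma' \circ i' = (i' \times \id) \circ \Gamma''$. Because closed immersions of smooth subvarieties preserve both the Hodge filtration and the $V$-filtration strictly, the previous identity transports to $Y' \times \A^r$ to give $F_{-\dim X} V^\lambda \Gamma'_+ \cM' = (i' \times \id)_*(\omega_{X'}(-\lfloor (\lambda - \varepsilon) F \rfloor))$. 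Now apply the proper direct image $\mu_+$: invoking Saito's strictness for proper direct images of mixed Hodge modules, simultaneously with respect to $F$ and $V^\bullet$, and using the functorial identity $\Gamma_+ \mathcal{H}^0 \mu_+ = \mathcal{H}^0(\mu \times \id)_+ \Gamma'_+$, we obtain
\[ F_{-\dim X}\, V^\lambda\, \Gamma_+ \mathcal{H}^0 \mu_+ \cM' \;=\; i_* \pi_* \bigl(\omega_{X'}(-\lfloor (\lambda - \varepsilon) F \rfloor)\bigr) \;=\; i_* \cJ(\omega_X, \fra^{\lambda - \varepsilon}). \]
By the decomposition theorem for pure Hodge modules, $\mathcal{H}^0 \mu_+ \cM' = \cM \oplus \cN$ with $\cN$ supported on a union of proper subvarieties $W \subsetneq X$; for each such summand $F_{-\dim X} \cN = 0$, since the Hodge filtration of a Hodge module of strict support $W$ starts at $F_{-\dim W}$, and this vanishing is preserved by $\Gamma_+$. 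Hence only the $\cM$ summand contributes, yielding the desired equality.

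The main technical obstacle is the simultaneous strictness of $\mu_+$ with respect to both the Hodge filtration and the multi-variable $V$-filtration along $t_1,\dots,t_r$. Strictness in $F$ is inherent to mixed Hodge modules, but compatibility with $V^\bullet$ for $r > 1$ is subtler and must be extracted from Saito's machinery, for example by specialization to a generic linear combination of the $t_i$ to reduce to the one-variable case, or by an inductive argument over $r$ using iterated graph embeddings. A secondary delicate point is carefully tracking the Hodge filtration through the iterated graph pushforwards and the decomposition theorem, to confirm that only the ${\rm IC}^H_X$ summand survives at the lowest Hodge level $F_{-\dim X}$.
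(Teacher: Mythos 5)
Your overall route---log resolution, the smooth Budur--Musta\c{t}\u{a}--Saito computation upstairs, bistrictness of the Hodge and $V$-filtrations under the projective pushforward, then the decomposition theorem to isolate the ${\rm IC}_X^H$ summand---is the same as the paper's. But there is a genuine gap at the final, and most important, step: the claim that the extra summands $\cN$ with strict support $W\subsetneq X$ satisfy $F_{-\dim X}\cN = 0$ ``since the Hodge filtration of a Hodge module of strict support $W$ starts at $F_{-\dim W}$.'' That statement is false: the lowest Hodge level of a pure Hodge module with strict support $W$ is not controlled by $\dim W$ alone (Tate twists shift it arbitrarily). Already for the blow-up of a point on a smooth surface, $\cH^0\pi_*\Q^H_{\widetilde{X}}[2]$ contains a summand supported at the point which is a weight-$2$ Hodge structure of type $(1,1)$, whose lowest Hodge piece sits in level $-1$, not $-\dim W = 0$. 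In general a weight-$\dim X$ summand with strict support $W$ corresponds generically to a variation of Hodge structure of weight $\dim X-\dim W$, and effectivity only yields $p(\cN)\geq -\dim X$; the strict inequality $p(\cN) > -\dim X$ is exactly what your argument needs, and it is a nontrivial result of Saito (\cite{SaitoKollar}*{Prop. 2.6}, reproduced as \lemmaref{lem-lowestHodge} in the paper), proved via the surjectivity of ${\rm can}\colon \psi_{g,1}\to\phi_{g,1}$ for modules whose strict support is not contained in $\{g=0\}$, which gives $p(\phi_{g\circ f,1}M) > p(M)$. This is precisely the new ingredient that replaces the argument of \cite{BMS} (which in the smooth case could use a strong log resolution and the vanishing of $V^{\lambda}$, $\lambda>0$, for modules supported in the zero locus); without it your proof does not close.

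A secondary point: the simultaneous $F$- and multivariable $V$-strictness of the projective pushforward, which you flag as an obstacle to be ``extracted from Saito's machinery,'' does not need to be reproved by specialization or induction on $r$: it is exactly \cite{BMS}*{Prop. 3.2} (restated as \propositionref{prop-bistrictness}), applied after factoring $\pi\times{\rm id}$ through a closed embedding followed by a projection, combined with the invariance of the $V$-filtration and of the lowest Hodge piece under closed embeddings (\lemmaref{lem-sameSubvariety}, \exampleref{eg-HodgeFiltrationClosedEmbedding}). With that reference, and with Saito's lemma supplying the vanishing of the lower-dimensional summands at level $F_{-\dim X}$, your argument coincides with the paper's proof.
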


\begin{rmk} \label{rmk-wellDefined} The left hand side of the above isomorphism is independent of the embedding $X\subseteq Y$. The right hand side can also be viewed as independent of the choice of embedding $X\subseteq Y$. Indeed, though to have a filtered $\cD$-module underlying a mixed Hodge module one must work in an ambient smooth variety, the first non-zero piece of the Hodge filtration is a well-defined $\cO_X$-coherent sheaf which does not depend on choice of embedding. In the case of ${\rm IC}_X$, it is the Grauert-Riemenschneider sheaf $\pi_*(\omega_X) = \omega_X^{\rm GR}$ \cite{SaitoMHC}*{Pg. 3}.

We will see directly in \lemmaref{lem-unambiguous} and \lemmaref{lem-ChangeY} that the right hand side of the equality is well-defined.
\end{rmk}

Our proof follows closely that of \cite{BMS}*{Thm. 1} but with a different ingredient to handle the fact that $X$ is not smooth. The same proof would go through verbatim if $X_{\rm sing} \subseteq V(f_1,\dots, f_r)$ as mentioned in Remark \ref{rmk-SameProof} below, but that is too restrictive a hypothesis.

In our theorem, we have no assumption on the singularities of $X$, which explains why we had to work with the multiplier module in place of the multiplier ideal. If $X$ were Gorenstein, one could obtain a formula for the multiplier ideal $\cJ(X,\fra^{\lambda-\varepsilon})$ in a similar way.

Using this and properties of the $V$-filtration, we can conclude the following statements (which can be proven by classical methods as well). The second is a Skoda type statement.

\begin{cor} \label{cor-Skoda} If $\lambda$ is a jumping number of $\cJ(\omega_X,\fra^{\bullet})$, then so is $\lambda + j$ for all $j\in \mathbf N$.

Also, we have
\[ \cJ(\omega_X,\fra^{\lambda}) = \fra \cdot \cJ(\omega_X,\fra^{\lambda-1}) \text{ for all } \lambda > r,\]
where $r$ is the number of defining functions for $\fra$.

Finally, 
\[{\rm depth}(\fra,\omega_X^{\rm GR}) \geq \lceil {\rm lct}(\omega_X,\fra)\rceil.\]
\end{cor}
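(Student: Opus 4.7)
My plan is to derive all three statements from Theorem \ref{thm-main} by combining it with standard structural properties of the multivariable $V$-filtration. Write $N := \Gamma_+ \cM$ throughout, so that Theorem \ref{thm-main} reads $\cJ(\omega_X,\fra^{\lambda-\varepsilon}) = F_{-\dim X} V^\lambda N$ for all $\lambda > 0$.

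The first assertion is the most direct: a jumping number at $\lambda > 0$ means $F_{-\dim X} V^\lambda N \subsetneq F_{-\dim X} V^{<\lambda} N$. I would argue by multiplying by an appropriate product of the $t_i$, which carries $V^\mu N$ into $V^{\mu+1} N$ and, by the general $b$-function behavior of the $V$-filtration at positive rational numbers, induces an isomorphism on $\mathrm{gr}_V$ for $\mu > 0$ that is strict for the Hodge filtration. This propagates a jump at $\lambda$ to a jump at $\lambda + 1$, and iterating gives jumps at all $\lambda + j$.

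For the Skoda statement, I would use the general structural fact for $V$-filtrations along $r$ coordinate functions that $V^\lambda N = \sum_{i=1}^r t_i V^{\lambda - 1} N$ for all $\lambda > r$, established via the multivariable $b$-function. Applying $F_{-\dim X}$ to both sides, the subtlety is that $t_i$ does not act as multiplication by $f_i$ on all of $N = \cM[\partial_{t_1},\ldots,\partial_{t_r}]$; however, since $F_{-\dim X} N$ lives entirely in the degree-zero summand $\cM \otimes 1$ (the higher summands $\cM \otimes \partial^\alpha$ start at strictly higher Hodge level), and $t_i$ acts there as $f_i$ (the correction term $-\alpha_i m \otimes \partial^{\alpha - e_i}$ vanishes when $\alpha = 0$), one obtains $F_{-\dim X}(t_i V^{\lambda - 1} N) = f_i \cdot F_{-\dim X} V^{\lambda - 1} N$. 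Theorem \ref{thm-main} then produces the Skoda formula.

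The depth bound is the main obstacle. Set $c = \lct(\omega_X, \fra)$, the smallest jumping number of $\cJ(\omega_X, \fra^\bullet)$. Theorem \ref{thm-main} gives $F_{-\dim X} V^\lambda N = \omega_X^{\rm GR}$ for every $\lambda \in (0,c)$. It suffices to establish vanishing of Koszul cohomology $H^i(K^\bullet(\underline{f}; \omega_X^{\rm GR})) = 0$ for $i < \lceil c \rceil$. Since $t_i$ acts as $f_i$ on $\omega_X^{\rm GR} \otimes 1 \subset N$, this Koszul complex is the degree-zero part of $K^\bullet(\underline{t}; N)$, and I would extract the vanishing from the constancy of $F_{-\dim X} V^\lambda N$ on $[0, c)$ as follows: given a Koszul cycle of degree $s < \lceil c \rceil$ realized in $\omega_X^{\rm GR}$, I would lift it to $V^\lambda N$ for $\lambda$ just below $c$, apply a homotopy built from the operators $\partial_{t_i}$ (which shift $V^\bullet$ down by one), and use the absence of jumps in the range $[0, c)$ to produce a bounding chain still landing in $F_{-\dim X} V^{<c} N = \omega_X^{\rm GR}$. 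The delicate step is tracking the Hodge level under the homotopy, since the $\partial_{t_i}$'s shift the Hodge filtration while respecting $V$, and a careful induction on $\lceil c \rceil$ must be set up; this is precisely where the bound degrades from $c$ itself to $\lceil c \rceil$.
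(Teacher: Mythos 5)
The essential input behind all three statements is the Hodge-filtered exactness of the Koszul-type complexes (\ref{eq-complexA}) and (\ref{eq-complexB}) built from the $V$-filtration, i.e.\ \cite{CD}*{Thm. 1.1}, and this is exactly what your argument tries to bypass. For the first claim your plan is essentially sound, up to one imprecision: for $r>1$ no single $t_i$, and no product of the $t_i$, induces an isomorphism on ${\rm Gr}_V$; what is true and what you need is that the Koszul differential $u\mapsto (ut_1,\dots,ut_r)$ is injective on ${\rm Gr}_V^{\mu}$ for $\mu>0$, which, combined with the fact that each $t_i$ preserves $F_{-\dim X}\Gamma_+\cM$ and acts there as $f_i$, propagates a jump at $\lambda$ to one at $\lambda+1$. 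The real gap begins with Skoda. From the unfiltered identity $V^{\lambda}N=\sum_i t_iV^{\lambda-1}N$ you need $F_{-\dim X}\bigl(\sum_i t_iV^{\lambda-1}N\bigr)=\sum_i f_i\,F_{-\dim X}V^{\lambda-1}N$, and your justification --- that $F_{-\dim X}N$ sits in the summand $\cM\otimes 1$ where $t_i$ acts as $f_i$ --- only controls the target, not the preimages: an element of $F_{-\dim X}N$ of the form $\sum_i t_iu_i$ with $u_i\in V^{\lambda-1}N$ may involve $u_i$ with nonzero $\partial_t$-components (which can cancel among the various $i$), and even when $u=m\otimes 1$, knowing $f_im\in F_{-\dim X}\cM$ does not force $m\in F_{-\dim X}\cM$. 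The equality you assert is precisely the filtered surjectivity at the right end of (\ref{eq-complexA}), i.e.\ a strictness statement for the pair of Hodge and $V$-filtrations, and it does not follow from this degree bookkeeping; the paper obtains it from \cite{CD}*{Thm. 1.1}.

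The depth inequality needs even more: exactness of the Koszul complex of $(f_1,\dots,f_r)$ on $\omega_X^{\rm GR}$ in the first $\lceil{\rm lct}(\omega_X,\fra)\rceil$ spots. The paper gets this by observing that, by \theoremref{thm-main} and the hypothesis on the log canonical threshold, the initial terms of the filtered exact complex (\ref{eq-complexA}) (for $\lambda$ a small $\varepsilon_2$) are copies of $\omega_X^{\rm GR}$ with differentials given by the $f_i$, so the filtered exactness of \cite{CD}*{Thm. 1.1} immediately yields the required Koszul exactness, and the Koszul characterization of depth finishes. Your proposed homotopy built from the $\partial_{t_i}$ cannot replace this: as you note yourself, $\partial_{t_i}$ raises the Hodge level, so the bounding chain it produces leaves $F_{-\dim X}$, and the constancy of $F_{-\dim X}V^{\lambda}N$ on $(0,c)$ provides no mechanism for correcting that chain back to the lowest Hodge level; carrying out such a correction is tantamount to reproving the bistrictness result underlying \cite{CD}*{Thm. 1.1}, which is a substantive theorem about Hodge modules rather than something an induction on $\lceil c\rceil$ can supply. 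So parts two and three of your proposal have a genuine gap, and the repair is to invoke the filtered exactness of (\ref{eq-complexA}) and (\ref{eq-complexB}) as the paper does.
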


In the statement above, ${\rm depth}(\fra,\cM) = \min_{\fra \subseteq \frm} {\rm depth}(\fra_{\frm},\cM_{\frm})$ for a coherent $\cO_X$-module $\cM$, where the minimum is taken over maximal ideals $\frm \subseteq \cO_X$ containing $\fra$.

\begin{rmk} As communicated to the author by Sung Gi Park, the depth inequality above is immediate when $\omega_X^{\rm GR}$ is a Cohen-Macaulay $\cO$-module (for $X$ normal, this is equivalent to having rational singularities). Indeed, in that case, ${\rm depth}(\fra, \omega_X^{\rm GR}) = {\rm codim}_X({\rm Supp}(\omega_X^{\rm GR} \otimes_{\cO_X} (\cO_X/\fra))) \geq {\rm codim}_X(V(\fra))$ (for the equality, see \cite{Stacks}*{Lem. 47.11.4}). But by restricting to a dense open subset $U\subseteq X$ such that $X,V(\fra)_{\rm red}$ are both smooth, we see that we always have the inequality
\[ {\rm codim}_X(V(\fra)) = {\rm lct}(\omega_U,\sqrt{\fra\vert_U}) \geq {\rm lct}(\omega_X,\fra).\]
\end{rmk}

\theoremref{thm-main} has the following interesting corollary giving a restriction relation between the Du Bois complexes. We use the invariant ${\rm HRH}(X)$ from \cite{DOR}, which measures a partial Poincar\'{e} duality property of $X$. For the precise definition, see \definitionref{defi-HRH} below. Note that the condition ${\rm HRH}(X) \geq 0$ implies $\underline{\Omega}_X^0$ is maximal Cohen-Macaulay as an object in $D^b_{\rm coh}(\cO_X)$, and in fact is equivalent to the quasi-isomorphism $\omega_X^{\rm GR} \cong \mathbb D(\underline{\Omega}_X^0)[-\dim X]$, where $\mathbb D(-) = R\cH om_{\cO_X}(-,\omega_X^\bullet)$ is the Grothendieck duality functor.

\begin{cor} \label{cor-DualDuBois} Let $X$ be reduced and irreducible satisfying ${\rm HRH}(X) \geq 0$ and let $\fra \subseteq \cO_X$ be a coherent ideal sheaf generated by a regular sequence of length $r$ and defining $Z\subseteq X$ so that $i \colon Z \to X$ is a regular embedding of pure codimension $r$ and $Z$ is reduced.

Then $\underline{\Omega}_Z^0$ is maximal Cohen-Macaulay as an object in $D^b_{\rm coh}(\cO_Z)$, and we have
\[  \mathbb D(\underline{\Omega}_Z^0)[-\dim Z] \cong \frac{\cJ(\omega_X,\fra^{r-\varepsilon})}{\fra \cdot \cJ(\omega_X,\fra^{r-1-\varepsilon})} \text{ for } 0 < \varepsilon \ll 1.\]

In particular, if ${\rm lct}(\omega_X,\fra) = r$, then we have an isomorphism for any resolution of singularities $\pi \colon Y \to X$:
\[ \underline{\Omega}_Z^0 \cong Li^* R\pi_*\cO_Y.\]
\end{cor}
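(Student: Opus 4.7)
The plan is to apply Theorem~\ref{thm-main} to translate the quotient of multiplier modules into a graded piece of the $V$-filtration on $\Gamma_+\cM$, identify that graded piece with the lowest Hodge filtration step of ${\rm IC}_Z^H$, and then use Grothendieck duality (together with the ${\rm HRH}$ hypothesis) to match it with $\mathbb D(\underline{\Omega}_Z^0)[-\dim Z]$.

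First, by Theorem~\ref{thm-main} applied to both numerator and denominator, one has
\[
\frac{\cJ(\omega_X,\fra^{r-\varepsilon})}{\fra\cdot\cJ(\omega_X,\fra^{r-1-\varepsilon})}
\;=\;
\frac{F_{-\dim X}V^r\Gamma_+\cM}{\sum_i f_i\cdot F_{-\dim X}V^{r-1}\Gamma_+\cM}.
\]
Since multiplication by $f_i$ on $\Gamma_+\cM$ agrees with the action of $t_i$, which shifts $V^{r-1}$ into $V^r$, strictness of the Hodge filtration combined with standard properties of the BMS multi-variable $V$-filtration identifies this quotient with the $F_{-\dim X}$-step of the associated graded piece at $r$; it is annihilated by every $f_i$ and hence naturally supported on $Z$. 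I would then identify this with the lowest Hodge piece of the specialization of ${\rm IC}_X^H$ along $f_1,\dots,f_r$. The regular-sequence hypothesis on $\cO_X$ together with the reducedness of $Z$ should guarantee that this specialization is pure and equals ${\rm IC}_Z^H$ (up to the canonical trivialization of $\det N_{Z/X}$ afforded by the ordered regular sequence), whence the quotient is isomorphic to $\omega_Z^{\rm GR}$. Invoking ${\rm HRH}(X)\geq 0$ and Grothendieck duality along $i$ yields $\mathbb D(\underline{\Omega}_Z^0)[-\dim Z]\cong\omega_Z^{\rm GR}$, and the maximal Cohen-Macaulay assertion for $\underline{\Omega}_Z^0$ follows from this identification.

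For the ``in particular'' statement, ${\rm lct}(\omega_X,\fra)=r$ forces both multiplier modules to equal $\omega_X^{\rm GR}$, so the quotient simplifies to $\omega_X^{\rm GR}/\fra\omega_X^{\rm GR}$. The ${\rm HRH}(X)\geq 0$ hypothesis makes $\omega_X^{\rm GR}$ maximal Cohen-Macaulay, so $f_1,\dots,f_r$ is regular on $\omega_X^{\rm GR}$ and the quotient equals $Li^*\omega_X^{\rm GR}$. Combining Grauert-Riemenschneider ($R\pi_*\omega_Y\cong\omega_X^{\rm GR}$) with ${\rm HRH}(X)\geq 0$ gives $R\pi_*\cO_Y\cong\underline{\Omega}_X^0$, and Grothendieck duality for the regular embedding (using $Ri^!(-)\cong Li^*(-)\otimes\omega_{Z/X}[-r]$ together with the trivialization of $\omega_{Z/X}$ by the regular sequence) dualizes the first-part isomorphism to $\underline{\Omega}_Z^0\cong Li^*R\pi_*\cO_Y$.

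The hard part will be the specialization identification above: producing, in the singular-ambient setting, the comparison of the $V$-filtration graded piece of $\Gamma_+\cM$ with $\omega_Z^{\rm GR}$. This is the analog for singular ambients of the smooth-case computation of BMS, and requires carefully tracking how the regular-sequence hypothesis on $\cO_X$ interacts with the $V$-filtration of the $\cD_Y$-module on the smooth ambient $Y$, on which the $f_i$'s themselves may not form a regular sequence.
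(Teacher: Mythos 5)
Your reduction of the quotient $\cJ(\omega_X,\fra^{r-\varepsilon})/\fra\cdot\cJ(\omega_X,\fra^{r-1-\varepsilon})$ to the $F_{-\dim X}$-piece of a $V$-filtration graded object supported on $Z$ is fine and matches the paper's use of \theoremref{thm-main} together with the Koszul-type description of $i^!$ via the $V$-filtration. The genuine gap is the next step: you claim that this specialization of ${\rm IC}_X^H$ along the regular sequence is \emph{pure} and equal to ${\rm IC}_Z^H$, so that the quotient is $\omega_Z^{\rm GR}$. This is false in general, and no amount of care with the regular sequence will fix it. Already for $X=\A^3$ (so ${\rm HRH}(X)=+\infty$), $r=1$ and $f$ a cubic cone (e.g.\ $x^3+y^3+z^3$), one has ${\rm lct}(\omega_X,f)=1$, so the quotient is $\omega_X/f\omega_X\cong\omega_Z$; since $Z$ is Du Bois but not rational, $\omega_Z^{\rm GR}\subsetneq\omega_Z$, and indeed the nearby/vanishing cycles of $\omega_X$ along $f$ are not pure. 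The correct target of the isomorphism is $\mathbb D(\underline{\Omega}_Z^0)[-\dim Z]$, which coincides with $\omega_Z^{\rm GR}$ only under a condition on $Z$ itself (essentially ${\rm HRH}(Z)\geq 0$, as stated in the introduction); your assertion that ``${\rm HRH}(X)\geq 0$ and Grothendieck duality along $i$'' yield $\mathbb D(\underline{\Omega}_Z^0)[-\dim Z]\cong\omega_Z^{\rm GR}$ begs exactly this question. The paper never asserts purity: it uses the triangle ${\rm IC}_X^H\to\mathbf D_X^H\to\widetilde K_X^\bullet$, applies $i^!$, and transfers the duality-type condition from $X$ to $Z$ only in the weak form $c(Z)\geq c(X)\geq 0$ via \lemmaref{lem-adjunction}; this is precisely enough to identify ${\rm Gr}^F_{-\dim X}{\rm DR}(\cH^r i^!{\rm IC}_X^H)$ with $\mathbb D(\underline{\Omega}_Z^0)[-\dim Z]$, and the left side is then computed by the $V$-filtration formula. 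The maximal Cohen--Macaulay claim for $\underline{\Omega}_Z^0$ comes out of this identification, not from an identification with $\omega_Z^{\rm GR}$.

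There is a secondary error in your ``in particular'' step: ${\rm HRH}(X)\geq 0$ does \emph{not} make $\omega_X^{\rm GR}$ a maximal Cohen--Macaulay module (its Grothendieck dual is $\underline{\Omega}_X^0$, which is in general a genuine complex; for $X$ normal, Cohen--Macaulayness of $\omega_X^{\rm GR}$ is equivalent to rational singularities, per the remark following \corollaryref{cor-Skoda}). What is actually needed, and what the paper uses, is the depth bound ${\rm depth}(\fra,\omega_X^{\rm GR})\geq\lceil{\rm lct}(\omega_X,\fra)\rceil=r$ from \corollaryref{cor-Skoda}, which makes $f_1,\dots,f_r$ a regular sequence on $\omega_X^{\rm GR}$ and gives $\omega_X^{\rm GR}/\fra\,\omega_X^{\rm GR}\cong Li^*\omega_X^{\rm GR}$; from there your dualization to $\underline{\Omega}_Z^0\cong Li^*R\pi_*\cO_Y$ proceeds as in the paper.
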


The condition on ${\rm HRH}(X)$ above is automatic if $X$ has rational singularities or is a rational homology manifold. This recovers the well-known implication that log canonical singularities are Du Bois \cite{LogCanonicalAreDB,KovacsSchwedeSmith,ParkDB}. Note that we do not take a log resolution of the pair $(X,Z)$ in the corollary statement.

\begin{cor} If $(X,Z)$ is a pair such that $X$ has rational singularities, $i \colon Z \to X$ is a regular embedding of pure codimension $r$ and ${\rm lct}(X,Z) = r$, then $Z$ has Du Bois singularities.
\end{cor}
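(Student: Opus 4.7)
The plan is to derive this as a specialization of \corollaryref{cor-DualDuBois} in the case when $X$ has rational singularities, exploiting the fact that $R\pi_*\cO_Y$ collapses to $\cO_X$ in that setting. Rational singularities give $\mathrm{HRH}(X) \geq 0$ (as noted just after \corollaryref{cor-DualDuBois}) and $\omega_X^{\mathrm{GR}} = \omega_X$, so the structural hypotheses are in place. To invoke the previous corollary we must also verify: (i) $\fra$ is generated by a regular sequence of length $r$ and $Z$ is reduced; (ii) $\lct(\omega_X,\fra) = r$. Part of (i) is built into the regular-embedding hypothesis, and reducedness of $Z$ follows from the maximality $\lct(X,Z) = \codim Z = r$ combined with the standard adjunction picture for complete intersections. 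For (ii), the hypothesis $\lct(X,Z) = r$ translates into $\lct(\omega_X,\fra) = r$ via the identification $\cJ(\omega_X,\fra^c) \cong \omega_X \otimes \cJ(X,\fra^c)$ (suitably reflexified when the index of $K_X$ is nontrivial), which is available because $\Q$-Gorenstein rational is canonical.

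Granting these, \corollaryref{cor-DualDuBois} furnishes an isomorphism
\[ \un{\Omega}_Z^0 \cong Li^* R\pi_*\cO_Y \qquad \text{in } D^b_{\mathrm{coh}}(\cO_Z), \]
for any resolution $\pi \colon Y \to X$. The assumption that $X$ has rational singularities is precisely the statement that the natural map $\cO_X \to R\pi_*\cO_Y$ is a quasi-isomorphism, and $Li^*\cO_X = \cO_Z$ for any closed immersion. Therefore the right-hand side collapses to $\cO_Z$, and we obtain $\un{\Omega}_Z^0 \cong \cO_Z$ in the derived category of $\cO_Z$.

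Finally, this abstract isomorphism upgrades to the Du Bois conclusion. Since $\mathcal{H}^0(\un{\Omega}_Z^0)$ is always the structure sheaf of the seminormalization of $Z$, the iso forces $Z$ to be seminormal (so $\cO_Z^{\mathrm{sn}} = \cO_Z$) and $\mathcal{H}^i(\un{\Omega}_Z^0) = 0$ for $i > 0$. The canonical map $\cO_Z \to \un{\Omega}_Z^0$ then induces the identity on $\mathcal{H}^0$ and the zero map between vanishing sheaves in higher degrees, so it is a quasi-isomorphism, which is the definition of $Z$ being Du Bois. The main obstacle I anticipate is the formal bookkeeping in the first paragraph: matching $\lct(X,Z)$ with $\lct(\omega_X,\fra)$ and verifying reducedness of $Z$ when $X$ is only $\Q$-Gorenstein rational rather than Gorenstein. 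Once these standard translations are in hand, the corollary is a clean application of \corollaryref{cor-DualDuBois}.
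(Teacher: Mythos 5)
Your overall route is the one the paper intends: the statement is a direct specialization of \corollaryref{cor-DualDuBois}, using that rational singularities give ${\rm HRH}(X)\geq 0$ and $R\pi_*\cO_Y\cong \cO_X$, so that $\underline{\Omega}_Z^0 \cong Li^*R\pi_*\cO_Y \cong \cO_Z$; your closing paragraph upgrading the abstract isomorphism to the Du Bois property via seminormality and vanishing of the higher cohomology sheaves is a legitimate (and welcome) elaboration of a point the paper leaves implicit.

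The one genuine problem is the justification of the translation ${\rm lct}(X,Z)=r \Rightarrow {\rm lct}(\omega_X,\fra)=r$. The identification $\cJ(\omega_X,\fra^c)\cong\omega_X\otimes\cJ(X,\fra^c)$ is not available in this generality, and the claim that ``$\Q$-Gorenstein rational is canonical'' is false (non-canonical rational surface singularities are $\Q$-factorial, for instance); moreover rational singularities do not imply $\Q$-Gorenstein at all, so the relevant notion of ${\rm lct}(X,Z)$ is the de Fernex--Hacon one. Fortunately only a one-sided containment is needed, and it is already in the paper: the Remark quoting Shibata gives $\cJ(X,\fra^\lambda)\cdot\omega_X^{\rm GR}\subseteq\cJ(\omega_X,\fra^\lambda)$, so ${\rm lct}(X,Z)=r$ forces $\cJ(\omega_X,\fra^{\lambda})=\omega_X^{\rm GR}$ for all $\lambda<r$, i.e. ${\rm lct}(\omega_X,\fra)\geq r$; the reverse inequality, which is all that remains to match the hypothesis of \corollaryref{cor-DualDuBois} literally (and only the ``$\geq$'' direction is actually used in its proof), is the generic computation in the Remark following \corollaryref{cor-Skoda}. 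Relatedly, your assertion that reducedness of $Z$ follows from ${\rm lct}(X,Z)=r$ ``by the standard adjunction picture'' is plausible but not an argument as written: one needs, e.g., that at a generic point of $Z$ where the regular sequence does not cut out a reduced point the log canonical threshold drops strictly below $r$ (multiplicity bounds on lct), together with the fact that $Z$ is Cohen--Macaulay, hence has no embedded points; alternatively, treat reducedness of $Z$ as an implicit hypothesis, as in \corollaryref{cor-DualDuBois}. With these repairs your argument coincides with the paper's.
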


As another corollary, the main theorem recovers a recent result of Ajit computing the multiplier ideals of $\fra$ in terms of multiplier ideals on the deformation to the normal cone. The corresponding relation of $V$-filtrations (motivated by Verdier's specialization construction \cite{Verdier}) is given in \cite{BMS}*{(1.3.1)}. It is one of the main tools in the study of higher codimension $V$-filtrations.

As above, let $f_1,\dots, f_r$ be functions on smooth $Y$ restricting to functions on the singular subvariety $X \subseteq Y$. Let $T = Y \times \A^r_t$ be the target of the graph embedding morphism. We define the deformation to the normal cone via the extended Rees algebra construction: we let
\[ \widetilde{X} =  {\rm Spec}(\bigoplus_{n\in\Z} \fra^n \cO_X u^{-n}),\]
\[ \widetilde{Y} = {\rm Spec}(\bigoplus_{n\in \Z} \fra^n \cO_Y u^{-n}),\]
\[ \widetilde{T} = {\rm Spec}(\bigoplus_{n\in \Z} (t_1,\dots, t_r)^n \cO_T u^{-n}) \cong Y \times \A^r_z \times \A^1_u,\]
where in the last isomorphism we have $z_i = \frac{t_i}{u}$. We have the commutative diagram where each square is Cartesian:
\[ \begin{tikzcd} \widetilde{X} \ar[r] \ar[d] & \widetilde{Y} \ar[r] \ar[d] & \widetilde{T} \ar[d] \\ X \ar[r] & Y \ar[r]  & T \end{tikzcd}.\]

Of course, we have the Hodge module ${\rm IC}_{\widetilde{X}}$ on $\widetilde{T}$. We also have the module, which we denote $\widetilde{\rm IC}_X$, defined by applying the Verdier specialization process to ${\rm IC}_X$ (see the discussion after the proof of \theoremref{thm-main} below). These modules agree upon restriction to $\{u \neq 0\}$, and so their $V$-filtrations are comparable in positive degrees (see \lemmaref{lem-PositiveVFilt} below). From this observation, the main theorem, and \cite{BMS}*{(1.3.1)}, we conclude the following, which should be compared with \cite{Ajit}*{}:
\begin{cor} \label{cor-Ajit} In the above notation, we have for all $\lambda >0$ an equality of graded submodules
\[ \cJ(\omega_{\widetilde{X}},u^\lambda) = \bigoplus_{k \in \Z} \cJ(\omega_X,\fra^{\lambda-k-1}) u^kdu \subseteq j_*j^*(\omega_{\widetilde{X}}^{\rm GR}) = j_*( \omega_X^{\rm GR} \boxtimes \C[u^{\pm 1}] du),\]
where $j\colon X \times \mathbf G_{m,u} \to \widetilde{X}$ is the inclusion of the complement of $\{u=0\}$.
\end{cor}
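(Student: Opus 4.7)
The plan is to combine \theoremref{thm-main}, applied twice, with the compatibility \cite{BMS}*{(1.3.1)} between the single-variable $V$-filtration along $u$ on the Verdier specialization and the multi-variable $V$-filtration along $t_1,\dots,t_r$.

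First, I would apply \theoremref{thm-main} to $(\widetilde{X},(u))$, viewing $\widetilde{X}\subseteq\widetilde{T}$ as embedded in the smooth ambient variety. Since $\dim\widetilde{X}=\dim X+1$ and multiplier modules are right-continuous in the exponent, this yields, for $\lambda>0$,
\[ \cJ(\omega_{\widetilde{X}},u^\lambda) = F_{-\dim X - 1}\,V^{>\lambda}\,\Gamma^u_+\,\widetilde{\cM}, \]
where $\widetilde{\cM}$ is the filtered right $\cD$-module underlying ${\rm IC}^H_{\widetilde{X}}$ and $\Gamma^u$ is the graph embedding along $u$. By the observation preceding the corollary, ${\rm IC}^H_{\widetilde{X}}$ and $\widetilde{{\rm IC}}_X$ agree on $\{u\neq 0\}$, so \lemmaref{lem-PositiveVFilt} lets me replace $\widetilde{\cM}$ with the $\cD$-module underlying $\widetilde{{\rm IC}}_X$ for all $\lambda>0$.

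Next, I would invoke \cite{BMS}*{(1.3.1)}, which expresses the single-variable $V$-filtration along $u$ of (the graph of) the Verdier specialization of $\Gamma^t_+\cM$ in terms of the higher-codimension $V$-filtration along $t_1,\dots,t_r$ of $\Gamma^t_+\cM$. Here $\cM$ is the $\cD$-module underlying ${\rm IC}^H_X$ on $Y$ and $\Gamma^t$ is the graph embedding along $f_1,\dots,f_r$. Because the specialization is $\mathbf G_m$-equivariant and both the $V$- and Hodge filtrations respect this grading, the resulting identification is a weight-graded direct sum compatible with the Hodge filtration; accounting for $u^k\,du$ being in weight $k+1$, the weight-$(k+1)$ piece of $V^{>\lambda}\Gamma^u_+\widetilde{\cM}$ is $V^{>(\lambda-k-1)}\Gamma^t_+\cM\cdot u^k\,du$, with the Hodge index of such a weight-$(k+1)$ element differing by one from that of its factor on $X$.

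Finally, I would apply \theoremref{thm-main} to $(X,\fra)$ to identify $F_{-\dim X}V^{>(\lambda-k-1)}\Gamma^t_+\cM$ with $\cJ(\omega_X,\fra^{\lambda-k-1})$, using the convention $\cJ(\omega_X,\fra^\mu)=\omega_X^{\rm GR}$ for $\mu\leq 0$. Summing over $k$ yields the claimed equality inside $j_*j^*(\omega_{\widetilde{X}}^{\rm GR})$. The main obstacle will be the careful bookkeeping in the application of \cite{BMS}*{(1.3.1)}: extracting the precise weight-graded formula for the $V$-filtration of the specialization, confirming the $+1$ shift arising from the $du$ factor in $\omega_{\widetilde{X}}$ versus $\omega_X$, and matching it with the corresponding unit shift in the Hodge index $F_{-\dim X-1}$ versus $F_{-\dim X}$. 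Once this is verified, the two invocations of \theoremref{thm-main} combine to deliver the stated formula.
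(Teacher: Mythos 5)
Your proposal is correct and follows essentially the same route as the paper: apply \theoremref{thm-main} to $(\widetilde{X},u)$ and to $(X,\fra)$, use the isomorphism of $F_pV^\lambda$ for $\lambda>0$ coming from agreement over $\{u\neq 0\}$ (\lemmaref{lem-PositiveVFilt}), and identify the $V$-filtration of the specialization $\widetilde{\rm IC}_X$ degreewise with the shift $\lambda-k-1$ and the one-step Hodge shift. The "bookkeeping" you defer to \cite{BMS}*{(1.3.1)} is exactly what the paper carries out by the explicit coordinate-change computation $\theta_u=\theta_{\widetilde u}+\theta_t$ together with \cite{CD}*{Lem. 2.4} for the Hodge piece.
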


Finally, we define a theory of ``Bernstein-Sato polynomials'' for pairs $(X,\fra)$, which is motivated by the construction in \theoremref{thm-main}. We only assume $X$ is connected and reduced in the definition. To any such pair $(X,\fra)$, we associate a polynomial $b_{(X,\fra)}(s)$ defined via the lowest Hodge piece of the intersection Hodge module ${\rm IC}_X^H$ (see Definition \ref{defi-BS} below). These polynomials agree with the usual ones, defined in \cite{BMS}, if $X$ is smooth.

The main properties of this polynomial are the following:
\begin{thm} \label{thm-bFunction} Let $X$ be a connected variety with a finite set $I$ such that $\{X_i \mid i \in I, \dim X_i = \dim X\}$ is the collection of maximal dimensional irreducible components of $X$ and let $(f_1,\dots, f_r) = \fra \subseteq \cO_X$ be an ideal sheaf. Then
\begin{enumerate} \item \label{itm-LCM} We have equality
\[ b_{(X,\fra)}(s) = {\rm lcm}_{i\in I} b_{(X_i,\fra\vert_{X_i})}(s).\]

\item \label{itm-NegRoots} If $\gamma$ satisfies $b_{(X,\fra)}(-\gamma) = 0$, then $\gamma \in \Q_{\geq 0}$. We have $s \mid b_{(X,\fra)}(s)$ if and only if $X_i \subseteq V(\fra)$ for some $i\in I$, in which case $s$ has multiplicity 1 inside $b_{(X,\fra)}(s)$.

\item \label{itm-JumpingNumbers} Assume $X$ is reduced and irreducible. We have $\min \{\lambda \mid b_{(X,\fra)}(-\lambda)= 0\} = \max\{\lambda \mid \cJ(\omega_X, \fra^{\lambda-\varepsilon}) = \omega_X^{\rm GR}\}$, or in other words, the negative of the largest root of $b_{(X,\fra)}(s)$ gives the \emph{log canonical threshold} of the pair $(\omega_X,\fra)$. In fact, we have containment
\[ \{ \lambda \mid \cJ(\omega_X,\fra^{\lambda-\varepsilon}) \neq \cJ(\omega_X,\fra^{\lambda})\} \cap [{\rm lct}(\omega_X,\fra) ,{\rm lct}(\omega_X,\fra) +1)  \subseteq \{\lambda \mid b_{(X,\fra)}(-\lambda) = 0\}.\]

\item Let $\fra\vert_{X_{\rm reg}} = (\widetilde{f}_1,\dots, \widetilde{f}_r) \subseteq \cO_{X_{\rm reg}}$. Then
\[ b_{\widetilde{f}}(s) \mid b_{(X,\fra)}(s).\]
\end{enumerate}
\end{thm}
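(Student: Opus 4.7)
\emph{Proof plan.} Fix an embedding $i\colon X\hookrightarrow Y$ into a smooth ambient variety along which $f_1,\dots,f_r$ extend, let $(\cM,F)$ denote the filtered right $\cD_Y$-module underlying $i_*{\rm IC}_X^H$, and set $N:=\Gamma_+\cM$ with its $V$-filtration along $t_1,\dots,t_r$. By \definitionref{defi-BS}, $b_{(X,\fra)}(s)$ is built from the action of $s=-\sum_j\partial_{t_j}t_j$ on the image of $F_{-\dim X}\cM\cdot\delta_{\underline{f}}$ in the $V$-graded pieces of $N$. All four parts will follow from standard $V$-filtration manipulations in the spirit of \cite{BMS}, with \theoremref{thm-main} providing the bridge to multiplier modules.

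For \itemref{itm-LCM}, semisimplicity and the strict-support decomposition of pure Hodge modules yield ${\rm IC}_X^H=\bigoplus_{i\in I}{\rm IC}_{X_i}^H$, hence $\omega_X^{\rm GR}=\bigoplus_i\omega_{X_i}^{\rm GR}$, since only top-dimensional components support simple pure weight-$\dim X$ summands on $X$. Because $\Gamma_+$ and $V^\bullet$ commute with direct sums of pure Hodge modules and the generator decomposes compatibly, the minimal polynomial on the sum is the LCM of those of the summands.

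For \itemref{itm-NegRoots}, rationality is immediate from Saito's theorem that $V^\bullet$ on any mixed Hodge module has rational jumps. Reducing to irreducible $X$ via \itemref{itm-LCM}, non-negativity of $\gamma$ in $b(-\gamma)=0$ comes from $\omega_X^{\rm GR}\delta_{\underline f}\subseteq V^0 N$ (the class $\delta_{\underline f}$ is a $V^0$-generator in the sign convention of the paper), so the relevant $V$-graded pieces live in degrees $\alpha\ge 0$. The $s$-factor characterization uses \theoremref{thm-main}: $s\mid b_{(X,\fra)}(s)$ iff $F_{-\dim X}V^0 N\neq F_{-\dim X}V^{>0}N$, and by the LCM reduction and the known smooth case \cite{BMS} this happens exactly when $\fra|_{X_i}=0$ for some $i\in I$; only one simple summand can contribute such a jump, so the multiplicity is one.

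For \itemref{itm-JumpingNumbers}, \theoremref{thm-main} identifies $\cJ(\omega_X,\fra^{\lambda-\varepsilon})=F_{-\dim X}V^\lambda N$, so a multiplier-module jump at $\lambda$ produces a nonzero class of $F_{-\dim X}\cM\cdot\delta_{\underline f}$ in ${\rm gr}_V^\lambda N$; since $s$ acts on ${\rm gr}_V^\lambda N$ by $-\lambda$ modulo nilpotents, this forces $b(-\lambda)=0$. The restriction to $[{\rm lct},{\rm lct}+1)$ is obtained by adapting the $\sum_j t_j$-shift argument of \cite{BMS}, which preserves the lowest Hodge level; the lct identity then follows as the smallest root of $b$ corresponds to the first multiplier-module jump. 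For the final divisibility assertion, I restrict $Y$ to a smooth open $V$ with $V\cap X=X_{\rm reg}$; on such an open $(\cM,F)|_V$ is the standard Tate-shifted module of $X_{\rm reg}$ (with lowest Hodge index at $-\dim X$), so $N|_V$ is precisely the BMS module computing $b_{\widetilde f}$, and since restriction to an open subset can only shrink the minimal polynomial of a fixed class, $b_{\widetilde f}(s)\mid b_{(X,\fra)}(s)$. I expect \itemref{itm-JumpingNumbers} to be the main obstacle: transporting the \cite{BMS} interval argument to a setting where the lowest Hodge piece is an arbitrary coherent sheaf rather than a line bundle will require care in tracking how $\sum_j t_j$ interacts with $F_{-\dim X}\cM$.
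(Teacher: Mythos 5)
Your overall architecture (strict-support/lcm decomposition for \itemref{itm-LCM}, \theoremref{thm-main} as the bridge for \itemref{itm-JumpingNumbers}, restriction to the smooth locus for the last item) matches the paper, and parts (1) and (4) are essentially correct as you argue them. But there are two genuine gaps in the heart of the theorem. First, in \itemref{itm-NegRoots} you assert that $\omega_X^{\rm GR}\delta_{\underline f}\subseteq V^0 N$ "by the sign convention" and then settle the $s$-factor question "by the known smooth case \cite{BMS}". Neither step is available: the positivity of the $V$-degrees of the lowest Hodge piece is not a formal consequence of conventions (for a general mixed module the lowest Hodge piece can meet arbitrarily negative $V$-pieces, e.g.\ localizations), and the components $X_i$ are singular with $\cM$ the IC module, so \cite{BMS} does not apply to them. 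What is actually needed, and what the paper proves, is the strict inclusion $F_{-\dim X}\Gamma_+\cM\subseteq V^{>0}\Gamma_+\cM$ whenever the irreducible $X$ is not contained in $V(\fra)$; this uses that ${\rm IC}_X^H$ has strict support $\Gamma(X)$ (hence $\Gamma_+\cM$ has no quotient supported on $\{t=0\}$), the filtered surjectivity of $\bigoplus_i F_{q-1}{\rm Gr}_V^{\mu+1}\xrightarrow{t_i}F_q{\rm Gr}_V^\mu$ for $\mu\le 0$ from \cite{CD}*{Thm. A, Cor. 3.4, Thm. 1.2}, and the vanishing $F_{-\dim X-1}\Gamma_+\cM=0$. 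Without this input you cannot conclude non-negativity of the roots, nor rule out a factor of $s$ (even $s^2$) coming from a component on which $\fra$ is nonzero; and your multiplicity-one reasoning ("only one simple summand can contribute") is not the right mechanism — several components may lie in $V(\fra)$, and multiplicity one comes from each such component contributing exactly $s$ (\exampleref{eg-ZeroEmbedding}) together with the $V^{>0}$ containment for the remaining components, then taking lcm.

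Second, in \itemref{itm-JumpingNumbers} you pass from "a jump of $F_{-\dim X}V^\bullet$ at $\lambda$" to "$b_{(X,\fra)}(-\lambda)=0$" directly, but $b_{(X,\fra)}$ is the minimal polynomial of $\theta$ on ${\rm Gr}_G^0$, not on $F_{-\dim X}{\rm Gr}_V^\lambda$, and a nonzero class in $F_{-\dim X}{\rm Gr}_V^\lambda$ could a priori die in ${\rm Gr}_V^\lambda{\rm Gr}_G^0$ because of $G^1$. The missing ingredient is the containment $G^1\Gamma_+\cM=\sum_i (G^0\Gamma_+\cM)t_i\subseteq V^{{\rm lct}+1}\subseteq V^{>\beta}$ for $\beta\in[{\rm lct},{\rm lct}+1)$, which gives the identification ${\rm Gr}_V^\beta{\rm Gr}_G^0=G^0{\rm Gr}_V^\beta$ and is exactly why the statement is restricted to that window (outside it, jumping numbers need not be roots); your proposed "adaptation of the $\sum_j t_j$-shift argument" does not supply this, and the lct identity likewise needs both $G^0\subseteq V^{\rm lct}$ (no smaller roots) and the ${\rm Gr}_V{\rm Gr}_G$ computation at $\lambda={\rm lct}$ (so that the lct is actually a root). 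You correctly flagged (3) as the delicate point; the resolution is this $G$-versus-$V$ bookkeeping rather than any interaction of $\sum_j t_j$ with $F_{-\dim X}\cM$.
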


Property \ref{itm-JumpingNumbers} recovers a well-known result due to Lichtin \cite{Lichtin} and Koll\'{a}r \cite{KollarPairs} and Ein-Lazarsfeld-Smith-Varolin \cite{ELSV} in the hypersurface case.

The following theorem uses the characterization of the rational homology manifold property via purity of the module $\Q^H_Z[\dim Z]$.

\begin{thm} \label{thm-BSRHM} Assume $X$ is a rational homology manifold, $\fra$ is (locally) generated by a regular sequence of length $r$ and that the only integer root of $b_{(X,\fra)}(s)$ is $-r$. Then $Z = V(\fra)$ is a rational homology manifold.
\end{thm}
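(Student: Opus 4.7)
The plan is to extend \cite{BMS}*{Thm.~4}, which handles $X$ smooth, to the case where $X$ is a rational homology manifold. Recall that a variety $W$ is a rational homology manifold iff the constant Hodge module $\Q_W^H[\dim W]$ is pure, iff $\Q_W^H[\dim W] \cong \mathrm{IC}_W^H$. So the hypothesis on $X$ gives the identification $\mathrm{IC}_X^H = \Q_X^H[\dim X]$, and the goal becomes the corresponding statement for $Z$.

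Fix a local embedding of $X$ into a smooth $Y$, and let $\cM$ denote the right $\cD_Y$-module underlying $\mathrm{IC}_X^H = \Q_X^H[\dim X]$. Let $\Gamma \colon Y \to Y \times \A^r_t$ be the graph embedding along generators of $\fra$. The $V$-filtration of $\Gamma_+ \cM$ along $t_1,\ldots,t_r$ has pure graded pieces $\mathrm{gr}_V^\alpha \Gamma_+ \cM$ by Saito's structure theorem for nearby cycles of pure Hodge modules (this is where purity of $\mathrm{IC}_X^H$ is crucial), and by \theoremref{thm-main} the lowest Hodge piece of $V^\alpha \Gamma_+ \cM$ is the multiplier module $\cJ(\omega_X,\fra^{\alpha-\varepsilon})$, so all relevant data is encoded by the multiplier-module filtration.

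Following \cite{BMS}, the central step is to construct a natural morphism from $\Q_Z^H[\dim Z]$ (pushed forward to $Y$) into the unipotent part of $\mathrm{gr}_V^r \Gamma_+ \cM$, i.e.\ the generalized $-r$-eigenspace of $-\sum_i t_i\partial_{t_i}$, via the Koszul complex of $t_1-f_1,\ldots,t_r-f_r$, and to show that the hypothesis on $b_{(X,\fra)}(s)$ forces this morphism to be an isomorphism. By definition $b_{(X,\fra)}(s)$ is, up to shift, the minimal polynomial of $-\sum_i t_i\partial_{t_i}$ on the lowest Hodge piece of $\mathrm{gr}_V^0 \Gamma_+\cM$ modulo a controlled subquotient; the assumption that $-r$ is the only integer root (with multiplicity one, by \theoremref{thm-bFunction}(\ref{itm-NegRoots})) then forces the integer-eigenvalue part of the unipotent monodromy on nearby cycles to be concentrated purely at $-r$, which is precisely the criterion for the Koszul morphism to be an isomorphism. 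Purity of $\mathrm{gr}_V^r\Gamma_+\cM$ transfers to $\Q_Z^H[\dim Z]$, giving the desired conclusion.

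The main obstacle is carrying out the Koszul identification across $X_{\mathrm{sing}}$. In the smooth case of \cite{BMS} the argument uses $\cO_X$ directly as a filtered $\cD_X$-module with trivial Hodge filtration; here $\cO_X$ is no longer such an object on singular $X$ and must be replaced throughout by $\mathrm{IC}_X^H$, which exists only as a $\cD_Y$-module on the smooth ambient $Y$. The Koszul computation must therefore be recast Hodge-theoretically, using the minimal extension property of $\mathrm{IC}_X^H$ together with strictness of the Hodge filtration under $V$-filtration to canonically identify the image of the morphism with $\mathrm{IC}_Z^H$ across the singularities of $X$, where (away from a proper closed subset) the construction reduces to the smooth-case argument of \cite{BMS}.
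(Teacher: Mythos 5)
There is a genuine gap at the heart of your argument: the claim that ${\rm Gr}_V^\alpha \Gamma_+\cM$ is pure ``by Saito's structure theorem for nearby cycles of pure Hodge modules'' is false. Nearby and vanishing cycles of a pure Hodge module are in general only \emph{mixed}, with weight filtration given by the monodromy filtration of the nilpotent operator $N=\theta_t+\alpha$ (suitably shifted); they are pure precisely when $N$ acts trivially. For instance, already for $X$ smooth and $r=1$ with $f$ a nodal curve, ${\rm Gr}_V^1\Gamma_+\omega_X$ is not pure. So purity of ${\rm Gr}_V^r\Gamma_+\cM$ is not an input you get for free from purity of ${\rm IC}_X^H$ -- it is essentially the whole content of the theorem, and it is exactly where the hypothesis on the integer roots of $b_{(X,\fra)}(s)$ must enter. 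Your proposal instead spends the $b$-function hypothesis on showing a Koszul-type morphism from $\Q_Z^H[\dim Z]$ to ${\rm Gr}_V^r$ is an isomorphism, arguing that the integer roots control which integer indices $j$ have ${\rm Gr}_V^j\neq 0$; but concentration of eigenvalues does not give vanishing of the nilpotent part $N$ on ${\rm Gr}_V^r$, and without that vanishing there is no purity to ``transfer.'' Moreover $b_{(X,\fra)}(s)$ is by definition the minimal polynomial of $\theta$ only on ${\rm Gr}_G^0$, where $G^\bullet=(F_{-\dim X}\Gamma_+\cM)\cdot V^\bullet\cD$, not on all of $\Gamma_+\cM$ or on a Hodge piece of ${\rm Gr}_V^0$; to propagate the root condition from $G^0$ to all of ${\rm Gr}_V^r$ one needs an argument like the paper's: exhaustiveness of $G^\bullet$, the surjections $\bigoplus_{|\alpha|=k}{\rm Gr}_G^0\to{\rm Gr}_G^{\pm k}$ giving $b_k(s)\mid b_{(X,\fra)}(s\pm k)$, hence ${\rm Gr}_V^j{\rm Gr}_G^k=0$ unless $j=k+r$, whence ${\rm Gr}_V^r={\rm Gr}_G^0{\rm Gr}_V^r$ is annihilated by $\theta+r$, i.e.\ $N=0$.

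Two further points you would need to supply even if the purity issue were repaired. First, before one can speak of purity of $\Q_Z^H[\dim Z]$ one must know it is a single Hodge module (not a complex); the paper gets this from \lemmaref{lem-adjunction} ($c(X)\leq c(Z)$, so $c(Z)=+\infty$), and your sketch does not address it. Second, the paper does not construct a morphism out of $\Q_Z^H[\dim Z]$ at all: it uses the rational homology manifold property of $X$ to identify $i^!{\rm IC}_X^H$ with $\mathbf D(\Q_Z^H[\dim Z])[-r]$, computes this via $\sigma^!\Gamma_+$ and \cite{CD}, and then invokes the criterion of \cite{CD}*{Thm.~B} reducing purity to triviality of the monodromy filtration on ${\rm Gr}_V^r(\Gamma_+\cM)$. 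If you want to salvage your Koszul-morphism route, you would still have to prove $N=0$ on ${\rm Gr}_V^r$ by an argument of the above type, at which point you have reproduced the paper's proof and the Koszul morphism is no longer doing the work.
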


\begin{rmk} Bernstein-Sato polynomials and $V$-filtrations were defined for certain singular rings $R$ in \cites{BFunctionSing,VFiltSing}. The point of view taken in those papers is to use the ring of differential operators on the ring $R$ itself (so that $R$ is a $\cD$-module, contrary to our setting). 

It would be interesting to understand in which cases the polynomials are comparable. By \exampleref{eg-Node} below, if $X = V(xy) \subseteq \A^2$ and $f=x$, then our polynomial agrees with theirs \cite{VFiltSing}*{Eg. 2.19}. However, for $X = V(x^2+y^3)$, our polynomial has strictly negative roots, but theirs has a positive root of $\frac{1}{2}$ \cite{VFiltSing}*{Eg. 2.20}.
\end{rmk}

\medskip

\noindent\textbf{Outline.} Section \ref{sec-prelim} reviews the definition of multiplier ideals and modules, the definition and properties of $b$-functions and $V$-filtrations of $\cD$-modules and the relevant notions from the theory of mixed Hodge modules. 

Section \ref{sec-proofs} contains the definition of the Bernstein-Sato polynomial $b_{(X,\fra)}(s)$ and proofs of the main results. 

Section \ref{sec-examples} contains some comments about weighted homogeneous examples.
\medskip

\noindent {\bf Acknowledgments} The author thanks Qianyu Chen, Mircea Musta\c{t}\u{a}, Sebasti\'{a}n Olano, Sung Gi Park and Claude Sabbah for conversations related to this work. He also thanks Rahul Ajit for discussions concerning the formula in \corollaryref{cor-Ajit} which led the author to write this note.

\section{Preliminaries} \label{sec-prelim}
In this section, we review the definition of multiplier ideals/modules as well as the pertinent aspects of the theory of $\cD$-modules and mixed Hodge modules. We refer the reader to \cite{LazII}*{Ch. 9} for multiplier ideals, to \cite{HTT} for the theory of $\cD$-modules and to \cite{SaitoMHP,SaitoMHM} for mixed Hodge modules (as well as Schnell's survey \cite{Schnell}).

\subsection{Multiplier ideals and modules} For $Y$ a smooth variety and $f \in \cO_Y(Y)$, the multiplier ideal $\cJ(Y,f^\lambda)$ is defined through a log resolution of the pair $(Y,f)$, meaning a projective birational morphism $\pi \colon Y' \to Y$ such that $Y'$ is smooth and ${\rm div}(\pi^*(f)) + {\rm Exc}(\pi)$ is an SNC divisor on $Y'$. Specifically, we have
\[ \cJ(Y,f^\lambda) = \pi_* \cO_{Y'}(K_{Y'/Y} -\lfloor \lambda {\rm div}( \pi^*(f)) \rfloor),\]
where $K_{Y'/Y}$ is the relative canonical divisor class.

More generally, for an ideal $\fra \subseteq \cO_Y$ and a log resolution $\pi \colon Y'\to Y$ of the pair $(Y,\fra)$, meaning $\pi$ is projective birational, $Y'$ is smooth, and $\fra \cO_{Y'} = \cO_{Y'}(-F)$ for some effective divisor $F$ with the property that $F+ {\rm Exc}(\pi)$ is an SNC divisor on $Y'$, we define
\[ \cJ(Y,\fra^{\lambda}) = \pi_* \cO_{Y'}(K_{Y'/Y}  - \lfloor \lambda F\rfloor).\]

The same definition works if $Y$ is ($\Q$-)Gorenstein. If $Y$ is replaced by a singular irreducible variety $X$, the difficulty in extending this definition lies in the definition (or lack thereof) of the canonical divisor $K_X$ on $X$. There are workarounds to this if one assumes $X$ is normal \cite{dFHacon}, but an alternative is to work instead with the \emph{multiplier module} of the ideal $\fra \subseteq \cO_X$. This is a sub-module of the Grauert-Riemenschneider sheaf $\pi_* \omega_{X'} = \omega_X^{\rm GR}$ defined through a log resolution $\pi \colon X' \to X$ of the pair $(X,\fra)$ by a similar formula to the above:
\[ \cJ(\omega_X,\fra^{\lambda}) = \pi_*\cO_{X'}(K_{X'} - \lfloor \lambda F\rfloor),\]
where $\fra \cO_{X'} = \cO_{X'}(-F)$. As $\lambda$ increases, the multiplier module does not get larger, so this defines a decreasing filtration on $\omega_{X}^{\rm GR}$. The Grauert-Riemenschneider sheaf, multiplier modules and multiplier ideals (when defined) are all independent of the choice of log resolution.

\begin{rmk} By \cite{Shibata}*{Cor. 3.25}, using the definition of multiplier ideal on normal varieties due to \cite{dFHacon} (whose definition we do not give in this paper) we have the containment
\[ \cJ(X,\fra^\lambda) \subseteq (\cJ(\omega_X, \fra^{\lambda}) \colon \omega_X^{\rm GR}),\]
where the right hand side is the colon ideal for the sub-module $\cJ(\omega_X,\fra^{\lambda}) \subseteq\omega_X^{\rm GR}$. In other words,
\[ \cJ(X, \fra^\lambda) \cdot \omega_{X}^{\rm GR} \subseteq \cJ(\omega_X,\fra^{\lambda}),\]
which gives a criterion for triviality of the multiplier module in terms of that of the multiplier ideal.
\end{rmk}

\begin{defi} We define the \emph{log canonical threshold} of the pair $(\omega_X,\fra)$ to be
\[ {\rm lct}(\omega_X,\fra) = \inf \{\lambda \mid \cJ(\omega_X,\fra^{\lambda}) \subsetneq \omega_X^{\rm GR}\}.\]

If $X$ is Gorenstein, then this agrees with the usual notion of log canonical threshold for the pair $(X,\fra)$.
\end{defi}

\subsection{$V$-filtrations on $\cD$-modules}

For $Y$ a smooth variety, the ring $\cD_Y$ of differential operators on $Y$ is the sub-sheaf of rings of $\cE nd_{\C}(\cO_Y)$ generated by $\cO_Y$ (viewed via multiplication on the left) and the tangent sheaf $\cT_Y$. In local coordinates, say $y_1,\dots, y_n$, an arbitrary element of $\cD_Y$ is of the form
\[ P = \sum_{\alpha \in \N^n} h_\alpha \de_y^\alpha,\]
where the sum is finite and $h_\alpha \in \cO_Y$. As it will come up below, we describe the \emph{classical adjoint} involution in this choice of coordinates. Given $P = \sum_\alpha h_\alpha \de_y^\alpha$, we define ${}^t P = \sum_\alpha (-\de_y)^\alpha h_\alpha \in \cD_Y$.

If $\cM$ is a left $\cD_Y$-module (for example, $\cO_Y$), we can define a corresponding right $\cD_Y$-module $\cM^r$ by
\[ \cM^r = \omega_Y \otimes \cM,\]
with action given locally as follows: if $y_1,\dots, y_n$ are coordinates on $Y$, with volume form $dy = dy_1\wedge \dots \wedge dy_n$, then
\[ (dy \otimes m) P = dy \otimes ({}^t P m) \text{ for all local sections } m\in \cM.\]

This operation is invertible, hence gives an equivalence between left and right $\cD_Y$-modules. We will not make use of the inverse transformation. From this construction, we see that $\omega_Y$ is naturally a right $\cD_Y$-module. In the theory of $\cD$-modules, right modules are best behaved for duality and pushforward, and left modules are best behaved for smooth pull-back and comparison with variations of Hodge structure.

As mentioned in the introduction, the $V$-filtration of Kashiwara \cite{Kashiwara} and Malgrange \cite{Malgrange} (the $\Q$-indexed refinement we use is due to Saito) is an incredibly important tool in the theory of $\cD$-modules and mixed Hodge modules. It gives the $\cD$-module analogue of the nearby and vanishing cycles construction, which is compatible with the usual one for constructible complexes under the Riemann-Hilbert correspondence. Saito made incredible use of the $V$-filtration for hypersurfaces in the construction of his theory of mixed Hodge modules.

As we will use a slightly non-standard index convention for $V$-filtrations on right $\cD$-modules, we review the conventions for left $\cD$-modules and simply use the side-changing operation to induce our convention for right $\cD$-modules. For more details, consult \cite{SaitoMHP}*{Sec. 3} for the hypersurface case and \cite{BMS}*{Sec. 1} for the higher codimension case.

First, we define the $V$-filtration on $\cD_Y$ along a smooth subvariety $Z \subseteq Y$. If $\cI_Z$ is the ideal sheaf defining $Z$ in $Y$, then for any $j\in \Z$, we define
\[ V^j_Z \cD_Y = \{ P \in \cD_Y \mid P \cI_Z^k \subseteq \cI_Z^{k+j} \text{ for all } k \in \Z\},\]
though we will drop the $Z$ subscript if it is clear from context.

For example, $\cO_Y \subseteq V^0\cD_Y$ and $\cI_Z \subseteq V^1 \cD_Y$. This filtration is compatible with the ring structure.

Given a coherent left $\cD_Y$-module $\cM$ and a smooth subvariety $Z\subseteq Y$, a $V$-filtration on $\cM$ is a decreasing, discretely and left-continuously $\Q$-indexed filtration $V^\bullet \cM$ with the following properties:
\begin{enumerate} \item We have $V^j\cD_Y \cdot V^\lambda \cM \subseteq V^{\lambda+j}\cM$ for all $j\in \Z, \lambda \in \Q$, with equality when $j\geq 0$ and $\lambda \gg 0$.
\item The $V^0\cD_Y$-module $V^\lambda \cM$ is coherent for all $\lambda \in \Q$.
\item For any derivation $\theta \in V^0\cD_Y$ such that $\theta$ acts as the identity on $\cI_Z/\cI_Z^2$, we have
\[ (\theta - \lambda +r) \text{ is nilpotent on } {\rm Gr}_V^{\lambda}(\cM) = V^\lambda \cM/V^{>\lambda} \cM,\]
where $V^{>\lambda} \cM = \bigcup_{\beta > \lambda} V^\beta \cM$.
\end{enumerate}

If $\cM$ is a left $\cD_Y$-module with $V$-filtration $V^\bullet \cM$ and $\cM^r = \omega_Y \otimes \cM$ is the corresponding right $\cD_Y$-module, then we define
\[ V^\lambda \cM^{r} = \omega_Y \otimes V^\lambda \cM,\]
where we recall that, in local coordinates, the $\cD_Y$-module structure on $\cM^r$ is given by the classical adjoint involution of $\cD_Y$.

\begin{rmk} If $Z \subseteq Y$ is the smooth subvariety defined by a partial system of coordinates $t_1,\dots, t_r$ with vector fields $\de_{t_1},\dots, \de_{t_r}$, then a derivation $\theta \in V^0\cD_Y$ as in the definition of the $V$-filtration can be taken to be $\theta_t = \sum_{i=1}^r t_i \de_{t_i}$.

Hence, we see via the classical adjoint that if $\cM^r = \cM \otimes_{\cO} \omega_Y$, then
\[ (m\otimes \alpha) \cdot \theta_t = ((-\theta_t -r) \cdot m) \otimes \alpha \text{ for all } m\in \cM, \alpha \in \omega_Y\]
where $\theta_t = \sum_{i=1}^r t_i \de_{t_i}$. In this way, in our conventions, we see that
\[ \theta+\lambda \text{ acts nilpotently on } {\rm Gr}_V^\lambda \cM^r,\]
and one could uniquely characterize the $\Q$-indexed $V$-filtration for right $\cD_Y$-modules along $Z$ in a way similar to the characterization given for left $\cD_Y$-modules above.
\end{rmk}

In general, it is quite difficult to describe the $V$-filtration on an arbitrary coherent $\cD$-module $\cM$, as evidenced by the main theorem, where even the lowest Hodge piece contains all of the information of the multiplier module. The following example is the easiest case, and follows essentially by Kashiwara's equivalence.

\begin{eg} \label{eg-Kashiwara} Let $\cM$ be a coherent right $\cD_{Y}$-module supported on $Z\subseteq Y$, where $Z$ and $Y$ are smooth varieties. Then $\cM$ admits a $V$-filtration. Indeed, by Kashiwara's equivalence, $\cM = i_+ \cM_0$ for some $\cM_0$ a coherent $\cD_Z$-module. If $Z = V(y_1,\dots, y_c)$ locally on $Y$, where $y_1,\dots, y_c$ are part of a system of local coordinates with corresponding vector fields $\de_{y_1},\dots, \de_{y_c}$, then the $\cD$-module direct image $i_+(-)$ is defined by
\[ i_+\cM_0 = \bigoplus_{\alpha \in \N^c} \cM_0 \de_y^\alpha\]
where $\cM_0 y_i = 0$ for $1\leq i\leq c$. Then it is not hard to check that
\[ V^\bullet \cM = \bigoplus_{|\alpha| \leq -\bullet} \cM_0 \de_y^\alpha.\]

For example, 
\[ (m \de_y^\alpha)(\theta_y) = (m (\theta_y + |\alpha|) )\de_y^{\alpha} = (m \de_y^{\alpha})|\alpha|,\]
using the fact that $m \theta_y = \sum_{i=1}^r m y_i \de_{y_i} = 0$. Thus, $(m\de_y^{\alpha})(\theta - |\alpha|) = 0$, and from here it is easy to check the other defining properties of the $V$-filtration hold.
\end{eg}

\begin{eg} \label{eg-InvarianceClosedEmbeddings} In a slightly different direction, let $i \colon Y_1 \hookrightarrow Y_2$ be a closed embedding of smooth varieties such that $Z\subseteq Y_2$ is a smooth variety of codimension $r$ in $Y_2$ with $Z' = Z\cap Y_1$ is also smooth of codimension $r$ in $Y_1$.

If $\cM$ is a coherent $Y_1$ module which admits a $V$-filtration along $Z'$ then $i_+ \cM$ admits a $V$-filtration along $Z$. In fact, we will show that the $V$-filtration is essentially invariant by $i_+$.

In local coordinates, assume $Y_1$ is defined inside $Y_2$ by $y_1,\dots, y_c$ and $Z$ is defined inside $Y_2$ by $t_1,\dots, t_r$. Then $Z'$ is defined inside $Y_1$ by $t_1,\dots, t_r$, which form part of a system of coordinates.

By definition of the $\cD$-module pushforward, in this local choice of coordinates we have
\[ i_+ \cM = \bigoplus_{\alpha \in \N^c} \cM \de_y^\alpha.\]

Using the fact that $t_i$ and $\theta_t = \sum_{i=1}^r t_i \de_{t_i}$ commutes with $\de_y^\alpha$, we see without much difficulty that
\[ V^\alpha i_+ \cM = \bigoplus_{\alpha \in \N^c} (V^\alpha \cM)\de_y^\alpha.\]
\end{eg}

By uniqueness of the $V$-filtration, we see that if $\varphi \colon \cM \to \cN$ is a morphism of $\cD_Y$-modules which both admit a $V$-filtration along $Z$, then $\varphi$ is automatically strict with respect to $V^\bullet$. Concretely, this means that the functors $V^\lambda(-)$ and ${\rm Gr}_V^\lambda(-)$ are exact for all $\lambda \in \Q$.

\begin{lem} \label{lem-PositiveVFilt} Let $Z \subseteq Y$ be a smooth subvariety in a smooth variety and let $\cM$, $\cN$ be two coherent $\cD_Y$-modules which admit $V$-filtrations along $Z$. If $U = Y \setminus Z$, let $\varphi \colon \cM\vert_U \to \cN\vert_U$ be an isomorphism of $\cD_U$-modules. Then $\varphi$ induces an isomorphism for all $\lambda > 0$
\[ V^\lambda \cM \cong V^\lambda \cN.\]
\end{lem}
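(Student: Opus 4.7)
The plan is to extend $\varphi$ to an isomorphism of the localized $\cD_Y$-modules $\cM(*Z)$ and $\cN(*Z)$, and then identify $V^\lambda \cM$ with $V^\lambda \cN$ as submodules of this common localization for $\lambda > 0$. Writing $j\colon U \hookrightarrow Y$ for the open immersion of the complement of $Z$, the $\cD$-module pushforward produces $\cM(*Z) = j_+ j^+ \cM$ and $\cN(*Z) = j_+ j^+ \cN$, and applying $j_+$ to $\varphi$ yields an isomorphism $\tilde{\varphi}\colon \cM(*Z) \to \cN(*Z)$ of $\cD_Y$-modules extending $\varphi$.

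The crucial input is the observation that any coherent $\cD_Y$-module $\cK$ supported on $Z$ and admitting a $V$-filtration along $Z$ satisfies $V^\lambda \cK = 0$ for all $\lambda > 0$. Indeed, Kashiwara's equivalence (combined with devissage on the order of vanishing along $Z$ for modules not killed by $\cI_Z$) reduces us to $\cK = i_+ \cK_0$, and \exampleref{eg-Kashiwara} together with the right-module conventions of this paper then shows that the $V$-filtration along $Z$ is concentrated in non-positive degrees.

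To exploit this, I would form the fibre product (in the category of coherent $\cD_Y$-modules)
\[ \Gamma = \cM \times_{\cN(*Z)} \cN \subseteq \cM \oplus \cN, \]
where the map $\cM \to \cN(*Z)$ is the composition $\cM \to \cM(*Z) \xrightarrow{\tilde{\varphi}} \cN(*Z)$ and $\cN \to \cN(*Z)$ is the canonical localization. From the Cartesian square one reads off that the kernel and cokernel of each projection $p_1 \colon \Gamma \to \cM$ and $p_2\colon \Gamma \to \cN$ are supported on $Z$: they are controlled by $\cH^0_Z(\cN), \cH^1_Z(\cN)$ and $\cH^0_Z(\cM), \cH^1_Z(\cM)$ respectively. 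Strictness of $V^\lambda$ (a consequence of the uniqueness of the $V$-filtration) combined with the vanishing above then shows that $V^\lambda p_1$ and $V^\lambda p_2$ are both isomorphisms for $\lambda > 0$. Since $\Gamma\vert_U$ is the graph of $\varphi$, the composite $V^\lambda p_2 \circ (V^\lambda p_1)^{-1}$ is the desired isomorphism $V^\lambda \cM \cong V^\lambda \cN$ induced by $\varphi$.

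The main obstacle is guaranteeing that the auxiliary objects $\cM(*Z)$, $\cN(*Z)$, $\Gamma$, and the pertinent local cohomology sheaves all admit $V$-filtrations along $Z$, so that strictness applies. In the holonomic setting --- the one relevant for the applications in this paper, since $\cM$ and $\cN$ arise from summands of mixed Hodge modules --- this is automatic; in general it follows by strictness from the $V$-filtration on the ambient direct sum $\cM \oplus \cN$.
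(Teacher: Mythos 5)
Your argument is correct and is essentially the paper's proof: both pass through $j_*$ of the restriction to $U$ to replace $\varphi$ by a correspondence defined over all of $Y$ whose defect is supported on $Z$, then combine the vanishing $V^{\lambda}=0$ ($\lambda>0$) for coherent modules supported on $Z$ (\exampleref{eg-Kashiwara}) with the exactness of $V^{\lambda}(-)$ coming from uniqueness of the $V$-filtration. The only difference is bookkeeping: the paper reduces to an actual morphism $\psi\colon \cM\to\cN$ through $\cH^0 j_*(\cN\vert_U)$ and takes kernel and cokernel, whereas you interpolate via the graph $\Gamma\subseteq\cM\oplus\cN$, whose induced $V$-filtration you correctly note exists as a submodule of $\cM\oplus\cN$.
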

\begin{proof} Note that $\varphi$ induces an isomorphism $\cH^0 j_*(\cM\vert_U) \cong \cH^0 j_*(\cN\vert_U)$, where $j \colon U \to Y$ is the open embedding. By the natural map $\cM \to \cH^0 j_*(\cM\vert_U)$, it suffices to assume that $\varphi$ is induced by restricting a morphism $\psi \colon \cM \to \cN$.

Now, we have the exact sequence
\[ 0 \to \cK \to \cM \to \cN \to \cQ \to 0,\]
where $\cK$ and $\cQ$ are supported on $Z$. By the previous example, $V^\lambda \cK = V^\lambda \cQ = 0$ for all $\lambda > 0$. Thus, the result follows by exactness of $V^\lambda(-)$.
\end{proof}

\subsection{Mixed Hodge modules} As mentioned in the introduction, Saito's theory of mixed Hodge modules is set-up via an extensive use of $V$-filtrations along regular functions. As the main theorem statement requires the Hodge filtration on the pure Hodge module ${\rm IC}_X$, we recall some of the basic notions in this theory.

On a smooth variety $Y$, the data of a mixed Hodge module is a tuple $((\cM,F,W), (\cK,W),\alpha)$ where $(\cM,F)$ is a good filtered $\cD_Y$-module, $W_\bullet \cM$ is a finite increasing filtration by $\cD_Y$-modules, $\cK$ is an algebraically constructible $\Q$-perverse sheaf on the associated analytic space $Y^{\rm an}$ and $\alpha$ is a filtered quasi-isomorphism
\[ \alpha \colon {\rm DR}(\cM,W) \cong (\cK,W)\otimes_{\Q} \C,\]
subject to various restrictions.

Using local embeddings into smooth varieties, one can also define the category of mixed Hodge modules on a singular variety $X$, which we denote by ${\rm MHM}(X)$. It is always an abelian category. An object $M \in {\rm MHM}(X)$ is pure of weight $w$ if ${\rm Gr}^W_j M = 0$ for all $j\neq w$.

\begin{eg} \label{eg-smoothTrivial} If $Y$ is smooth and connected, then $\omega_Y$ underlies a pure Hodge module of weight $\dim Y$ on $Y$. The filtration $F_\bullet \omega_Y$ is given by ${\rm Gr}^F_{-\dim Y} \omega_Y = \omega_Y$ and the $\Q$-structure is $\Q_Y[\dim Y]$. For this reason, the Hodge module is denoted $\Q^H_Y[\dim Y]$.
\end{eg}

One of the most powerful aspects of Saito's theory is that $D^b ({\rm MHM}(-))$ admits a six functor formalism, in the sense that for any $f\colon X_1 \to X_2$ between complex algebraic varieties, there are exact functors
\[ f_*,f_! \colon D^b({\rm MHM}(X_1)) \to D^b({\rm MHM}(X_2)), \, f^*,f^! \colon D^b({\rm MHM}(X_2)) \to D^b({\rm MHM}(X_1))\]
lifting the corresponding functors of constructible complexes. Similarly, there are dual and tensor functors, but we will not need those in this note.

\begin{eg} \label{eg-HodgeFiltrationClosedEmbedding} Let $i \colon Y_1 \to Y_2$ be a closed embedding between smooth varieties. Let $(\cM,F)$ be a filtered right $\cD_{Y_1}$-module underlying a mixed Hodge module $M$ on $Y_1$. Then the right filtered $\cD_{Y_2}$-module underlying $i_* M$ is $i_+(\cM,F)$, whose $\cD$-module structure was described in \exampleref{eg-Kashiwara} above. The Hodge filtration is defined locally as follows: assume $y_1,\dots, y_c$ are part of a local system of coordinates which define $Y_1$ inside $Y_2$. Then
\[ i_+ \cM = \bigoplus_{\alpha \in \N^c} \cM \de_y^\alpha,\]
and we have
\[ F_p i_+ \cM = \bigoplus_{\alpha \in \N^c} F_{p-|\alpha|} \cM \de_y^{\alpha}.\]

In particular, if $p(\cM) = \min\{p \mid F_p \cM \neq 0\}$, we have
\[ F_{p(\cM)} i_+ \cM = F_{p(\cM)}\cM.\]
\end{eg}

If $a\colon X \to {\rm Spec}(\C)$ is the structure morphism, then we let $\Q^H_X = a^* \Q^H \in D^b({\rm MHM}(X))$, where $\Q^H$ is the trivial Hodge structure. For $Y$ smooth as above, the object $\Q^H_Y[\dim Y] \in D^b({\rm MHM}(Y))$ has a single non-vanishing cohomology module in degree $0$ which agrees with that in \exampleref{eg-smoothTrivial} above.

For $X$ singular, $\Q^H_X[\dim X]$ could have several cohomology modules, though only in non-positive degrees. Moreover, \cite{SaitoMHM}*{(4.5.9)} shows that if $X$ is equidimensional, then the weight graded piece ${\rm Gr}^W_{\dim X} \cH^0(\Q^H_X[\dim X])$ lies over the intersection complex perverse sheaf ${\rm IC}_X$, and so it is denoted ${\rm IC}_X^H$.

The category of mixed Hodge modules is also stable by nearby and vanishing cycles along any regular function $f\colon X \to \A^1$. Moreover, the morphisms
\[ {\rm Var} \colon \phi_{f,1}(M) \to \psi_{f,1}(M)(-1)\]
\[ {\rm can} \colon \psi_{f,1}(M) \to \phi_{f,1}(M)\]
are morphisms of mixed Hodge modules. 

As those morphisms represent $i^!(M)$ (resp. $i^*(M)[-1]$) \cite{SaitoMHM}*{Cor. 2.24}, where $i\colon V(f) \to X$ is the closed embedding, it is not hard to see that this implies that $M$ admits no non-zero sub-objects supported on $V(f)$ if and only if ${\rm Var}$ is injective, and it admits no non-zero quotient objects supported on $V(f)$ if and only if ${\rm can}$ is surjective (see \cite{SaitoMHP}*{Prop. 3.1.8} for a direct argument using properties of the $V$-filtration). 

As mentioned above, the $V$-filtration for $\cD$-modules is related to the operations of nearby and vanishing cycles. If $Y$ is a smooth variety with $f\in \cO_Y(Y)$ with graph embedding $\Gamma \colon Y \to Y \times \A^1_t$, then for $(\cM,F)$ underlying a mixed Hodge module $M$ on $Y$, we consider $\Gamma_+(\cM,F)$ on $Y\times \A^1_t$. As $\{t=0\}$ is a smooth hypersurface, the module $\Gamma_+(\cM)$ admits a $V$-filtration. The underlying $\cD_Y$-module of $\psi_{f,1}(M)$ (resp. $\phi_{f,1}(M)$) is ${\rm Gr}_V^1 \Gamma_*(\cM)$ (resp. ${\rm Gr}_V^0 \Gamma_*(\cM)$), with Hodge filtration given by
\[ F_\bullet \psi_{f,1}(M) = \frac{F_{\bullet-1} V^1 \cM}{F_{\bullet-1} V^{>1}\cM} \quad (\text{resp. } F_\bullet \phi_{f,1}(M) = \frac{ F_{\bullet} V^0 \cM}{F_{\bullet} V^{>0} \cM}).\]

The category of pure Hodge modules is stable under $f_* = f_!$ by the following foundational result of Saito:
\begin{thm} \cite{SaitoMHP}*{Thm. 1} Let $f\colon X \to Y$ be a projective morphism between complex algebraic varieties. Let $M$ be a pure Hodge module of weight $w$ on $X$. Then $\cH^i f_* M$ is a pure Hodge module of weight $w+i$ on $Y$.
\end{thm}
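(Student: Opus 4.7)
The plan is to follow Saito's original strategy, which has two main reductions and two main technical inputs. First, since $f$ is projective it factors as a closed immersion $X \hookrightarrow \mathbf{P}^N_Y$ followed by the smooth projection $p \colon \mathbf{P}^N_Y \to Y$. Pushforward under closed immersions is exact on $\cD$-modules (Kashiwara's equivalence), preserves the Hodge filtration in the manner described in \exampleref{eg-HodgeFiltrationClosedEmbedding}, and does not alter weight, so the entire statement reduces to the case of the smooth projective projection $p$.

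For $p$, the underlying filtered $\cD_Y$-module of $\cH^i p_* M$ is computed as the $i$-th relative hypercohomology of the de Rham complex $M \otimes^{\mathbf{L}}_{\cD_X} \cD_{X \to Y}$ equipped with its induced filtration. The first crucial point is \emph{strictness}: the Hodge filtration on the hypercohomology must descend to a well-defined Hodge filtration on each $\cH^i p_* M$, equivalently $E_1$-degeneration of the Hodge-to-de Rham spectral sequence in this relative setting. This is proved by Noetherian induction on the support of $M$, using the $V$-filtration along a generic hypersurface in $Y$ and the inductive hypothesis applied to the nearby and vanishing cycles $\psi_{g,1} M$ and $\phi_{g,1} M$, whose supports are strictly smaller. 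In parallel, one must establish that proper pushforward commutes strictly with the $V$-filtration, a further intertwined statement proved simultaneously with strictness.

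Once strictness is in hand, the weight assertion requires the relative hard Lefschetz theorem: cup product with the Chern class $\ell$ of $\cO_{\mathbf{P}^N_Y}(1)$ gives isomorphisms $\ell^i \colon \cH^{-i} p_* M \xrightarrow{\sim} \cH^i p_* M(i)$. The resulting Lefschetz decomposition produces polarizations on each $\cH^i p_* M$, witnessing purity of weight $w+i$. The main obstacle I expect is the relative hard Lefschetz statement itself: Saito reduces it via a Lefschetz pencil argument in the fibers to smooth projective morphisms of relative dimension one, where it follows from the theory of polarizable variations of Hodge structure and the extension of such variations across normal crossings boundaries—this is where the bulk of the technical work lies, and it is logically entangled with the strictness inductions of the previous paragraph. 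The final step is to verify the remaining local axioms on $Y$: nearby and vanishing cycle compatibility with any function $g$, which proceeds by base change to $V(g)$ together with the $V$-filtration commutation, closing the inductive loop.
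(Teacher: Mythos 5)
This statement is not proved in the paper at all: it is quoted as a black box from Saito \cite{SaitoMHP}*{Thm. 1} (the stability of polarizable pure Hodge modules under projective direct images), so there is no internal argument to compare yours against. Your outline is a faithful roadmap of Saito's own strategy: factor $f$ through $\mathbf{P}^N_Y$, reduce to the projection using the behaviour of closed immersions, prove strictness of the filtered direct image together with its compatibility with the $V$-filtration by an induction on the dimension of the support via nearby and vanishing cycles, and obtain purity and polarizability from relative hard Lefschetz, with the crucial geometric input coming from polarizable variations of Hodge structure degenerating along normal crossings.

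That said, as submitted this is a sketch rather than a proof: every load-bearing assertion (the $E_1$-degeneration in the relative setting, the bistrict commutation of $p_*$ with $F_\bullet$ and $V^\bullet$, the relative hard Lefschetz isomorphisms $\ell^i\colon \cH^{-i}p_*M \to \cH^i p_* M(i)$ and the induced polarizations, and the verification of the axioms of a Hodge module for $\cH^i p_* M$) is exactly the content of Saito's paper and none of it is established here, nor is the intertwined inductive structure set up precisely enough to check that the loop closes. One further point of emphasis: in Saito's framework purity of weight $w+i$ is not a consequence of the Lefschetz decomposition alone; the weight condition is itself defined inductively through the relative monodromy filtrations on $\psi_{g,1}$ and $\phi_{g,1}$, so the weight assertion must be carried along inside the same induction that proves strictness, rather than added at the end. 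Since the paper only cites this theorem, the appropriate role for your text is as an expository summary of \cite{SaitoMHP}, not as a replacement proof.
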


Moreover, Saito shows that every pure object $M$ admits a \emph{strict support decomposition} (in fact, this is true in more general categories of mixed sheaves \cite{SaitoMixedSheaves}*{Lem. 6.2}, hence is a natural requirement in any theory with a weight formalism). To define this, we say that a module $M'$ has strict support $Z$, where $Z$ is an irreducible closed subvariety of $X$, if ${\rm Supp}(M') = Z$ and if $M'$ admits no non-zero sub-object or quotient object supported on a proper closed subvariety of $Z$. As noted above, this implies that for any $f \in \cO_X(X)$ such that $f \vert_Z \neq 0$, the natural morphism
\[ {\rm Var} \colon \phi_{f,1}(M') \to \psi_{f,1}(M')(-1) \text{ is injective}\]
and the morphism
\[ {\rm can} \colon \psi_{f,1}(M') \to \phi_{f,1}(M')\text{ is surjective}.\]

If $f = t$ defines a smooth hypersurface, these morphisms are represented at the $\cD$-module level by multiplication by $t$ (resp. by $\de_t$). In general, one uses the graph embedding to reduce to the case of a smooth hypersurface.

\begin{eg} The standard example of a Hodge module with strict support $X$ (if $X$ is irreducible) is ${\rm IC}_X^H$, the intersection complex Hodge module.
\end{eg}

For any $M$ a pure Hodge module of weight $w$ on $X$, there exists a decomposition $M = \bigoplus_{Z\subseteq X} M_Z$ where $Z\subseteq X$ ranges over irreducible closed subsets of $X$ and $M_Z$ has strict support $Z$.

The last few statements we need from the theory of mixed Hodge modules concern commutativity of nearby and vanishing cycles across projective morphisms.

\begin{thm} \cite{SaitoMHM}*{Thm. 2.14} \label{thm-CyclesPushforward} Let $f\colon X \to Y$ be a projective morphism of complex algebraic varieties. Let $g \colon Y \to \A^1$ be a regular function. Then for any $M \in {\rm MHM}(X)$ and $i\in \Z$, there are canonical isomorphisms
\[ \psi_{g,1} \cH^i f_* M \cong \cH^i f_* \psi_{g\circ f,1}M\]
\[ \phi_{g,1} \cH^i f_* M \cong \cH^i f_* \phi_{g\circ f,1}M\]
of mixed Hodge modules on $Y$.
\end{thm}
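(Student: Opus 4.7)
The plan is to reduce the assertion to the case of nearby and vanishing cycles along a smooth coordinate hypersurface via graph embeddings, and then to invoke Saito's strictness theorem for the $V$-filtration under projective pushforward.

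First, I would pass to the graph embeddings $\Gamma_g \colon Y \to Y \times \A^1_t$ and $\Gamma_{g\circ f}\colon X \to X \times \A^1_t$. These fit into a Cartesian square with $f \times \id_{\A^1}$ replacing $f$, and by proper base change we have the identification
\[ (\Gamma_g)_* \circ f_* \cong (f \times \id)_* \circ (\Gamma_{g \circ f})_*.\]
By definition, $\psi_{g,1}$ and $\phi_{g,1}$ (resp.\ $\psi_{g\circ f,1}$ and $\phi_{g\circ f,1}$) are computed as the corresponding functors for the coordinate $t$ after applying $(\Gamma_g)_*$ (resp.\ $(\Gamma_{g\circ f})_*$). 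Together with the fact that $f \times \id$ is still projective, this reduces the problem to the case where $g = t$ cuts out a smooth hypersurface $H \subseteq Y$.

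Second, I would represent $M$ locally on $X$ by a filtered right $\cD$-module $(\cM,F)$ on a smooth ambient variety, so that the $V$-filtration $V^\bullet \cM$ along the pullback of $H$ is available in the sense of the standard Kashiwara-Malgrange construction. Under the conventions recalled before the statement, the nearby and vanishing cycles Hodge modules $\psi_{t,1}(M)$ and $\phi_{t,1}(M)$ are computed as the graded pieces ${\rm Gr}^1_V(\cM)$ and ${\rm Gr}^0_V(\cM)$, equipped with Hodge filtrations induced from $F$ by the explicit formula given just before the theorem statement.

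The central technical input I would invoke is Saito's bi-filtered strictness for the projective direct image: for any filtered right $\cD$-module $(\cM, F)$ with $V$-filtration $V^\bullet$ underlying a mixed Hodge module, the direct image $f_+(\cM,F,V^\bullet)$ in the tri-filtered derived category is strict in each of $F$ and $V^\bullet$. Strictness with respect to $V^\bullet$ yields canonical isomorphisms
\[ V^\lambda \cH^i f_+ \cM \cong \cH^i f_+ V^\lambda \cM, \qquad {\rm Gr}_V^\lambda \cH^i f_+ \cM \cong \cH^i f_+ {\rm Gr}_V^\lambda \cM,\]
while strictness in $F$ propagates the Hodge filtration through the identification. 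Specializing to $\lambda = 1$ and $\lambda = 0$ produces the desired isomorphisms of filtered $\cD$-modules underlying $\psi_{t,1}$ and $\phi_{t,1}$ respectively.

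To upgrade this $\cD$-module identification to an isomorphism in ${\rm MHM}(Y)$, I would verify compatibility with the rational structures $\cK$, weight filtrations $W$, and comparison isomorphism $\alpha$. The rational side is classical: proper base change ensures that $\psi_t$ and $\phi_t$ on constructible complexes commute with $f_*$ for projective $f$, and matching this with the $\cD$-module side via the de Rham functor and $\alpha$ realizes both constructions as the same object of ${\rm MHM}(Y)$. The principal obstacle in the whole program is the strictness of $V^\bullet$ under projective pushforward: this is the deep geometric ingredient in Saito's theory, established by induction on the dimension of the support and relying on polarizability of pure Hodge modules together with the hard Lefschetz theorem to force the degeneration of the spectral sequences attached to the $V$-filtered de Rham complex.
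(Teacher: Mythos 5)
This statement is not proved in the paper at all: it is quoted directly from Saito, \cite{SaitoMHM}*{Thm. 2.14}, and used as a black box (the only related ingredient the paper records on its own is the bistrictness statement in \propositionref{prop-bistrictness}). So there is no internal proof to compare your sketch against; the comparison has to be with Saito's actual argument.

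Your outline does capture the standard architecture of that argument: reduce via the graph embeddings to the case where $g = t$ defines a smooth hypersurface, identify $\psi_{t,1}$ and $\phi_{t,1}$ at the level of filtered $\cD$-modules with ${\rm Gr}_V^1$ and ${\rm Gr}_V^0$ with the shifted Hodge filtrations, and feed this into the bifiltered $(F,V)$-strictness of the projective direct image, which is exactly the mechanism the paper isolates in \propositionref{prop-bistrictness}; the topological side is indeed classical proper base change for nearby and vanishing cycles. The genuine gap is concentrated in your final paragraph. For \emph{mixed} Hodge modules the isomorphisms must also respect the weight filtration, and on $\psi_{g,1}M$ and $\phi_{g,1}M$ the weight filtration is the \emph{relative monodromy filtration} of the nilpotent operator $N$ with respect to the induced filtration; its compatibility with $\cH^i f_*$ does not follow from $F$- and $V$-strictness alone, but requires the weight spectral sequence together with the decomposition and hard Lefschetz results for pure modules applied to the graded pieces ${\rm Gr}^W$. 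Moreover, in Saito's theory the commutation with $\psi$ and $\phi$ is not deduced after the fact from an independently constructed $f_*$ on ${\rm MHM}$: the stability of mixed Hodge modules under projective direct image and the isomorphisms of the theorem are established together, by induction, because the axioms defining ${\rm MHM}$ (quasi-unipotence, strict specializability, admissibility of the weight filtration) are themselves phrased through $\psi$ and $\phi$. So your plan is a reasonable reconstruction of the pure-module core of the proof, but as written it understates where the real work lies in the mixed case, and it could not be substituted for the citation without those additional arguments.
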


For the underlying filtered $\cD$-modules, this is implied by a more general bistrictness result for the Hodge and $V$-filtrations under projective pushforward. That result also gives the following:
\begin{prop} \cite{BMS}*{Prop. 3.2} \label{prop-bistrictness} Let $Z\subseteq Y_2$ be an inclusion of smooth varieties and let $Y_1$ be another smooth variety. Let $p \colon Y_1\times Y_2 \to Y_2$ be the second projection and $(\cM,F)$ be a filtered right $\cD_{Y_1\times Y_2}$-module underlying a mixed Hodge module such that $p$ is projective on the support of $\cM$. If $V^\bullet \cM$ is the $V$-filtration along $Y_1 \times Z \subseteq Y_1\times Y_2$ and $p_0 = \min \{p \mid {\rm Gr}^F_p \cM \neq 0\}$, then
\[ F_{p_0} V^\alpha \cH^i p_+(\cM) = R^i p_*(F_{p_0}V^\alpha \cM),\]
where on the left $V^\alpha \cH^i p_+(\cM)$ is the $V$-filtration along $Z\subseteq Y_2$.
\end{prop}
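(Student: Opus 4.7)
The plan is to combine Saito's strictness of the Hodge filtration under projective direct image with the commutativity of nearby and vanishing cycles under pushforward from \theoremref{thm-CyclesPushforward}. First I would reduce to the case where $Z$ is a smooth hypersurface in $Y_2$. If $t_1,\dots,t_r$ are local equations for $Z$, the graph embedding $\Gamma\colon Y_2 \to Y_2\times \A^r_t$ lifts to a closed embedding on the product with $Y_1$, and by \exampleref{eg-InvarianceClosedEmbeddings} together with \exampleref{eg-HodgeFiltrationClosedEmbedding} both the $V$-filtration along $Y_1\times Z$ and the lowest piece $F_{p_0}$ of the Hodge filtration transfer across this embedding without distortion. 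Iterating one coordinate at a time (or passing to the diagonal hypersurface in $\A^r_t$) further reduces to the case $r=1$.

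In the hypersurface case I would invoke Saito's filtered direct image theorem: for a projective morphism and a filtered $\cD$-module underlying a mixed Hodge module, the relative filtered de Rham complex computes the filtered pushforward \emph{strictly}, so
\[ F_q \cH^i p_+ \cM = R^i p_*\bigl(F_q \mathrm{DR}_{Y_1\times Y_2/Y_2}(\cM)\bigr) \fevery q.\]
Setting $q=p_0$, the relative de Rham complex collapses to a single term because all Hodge indices below $p_0$ vanish on $\cM$, yielding $F_{p_0}\cH^i p_+\cM = R^i p_*(F_{p_0}\cM)$ without yet involving the $V$-filtration.

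The essential step is to upgrade this identity to one that respects $V^\alpha$ along $Z\subseteq Y_2$. Concretely, I would show that the double-filtered relative de Rham complex on $Y_1\times Y_2$, with respect to $F_\bullet$ and the $V$-filtration along $Y_1\times Z$, has cohomology that is strict in both filtrations simultaneously. Strictness in $V$ alone follows from \theoremref{thm-CyclesPushforward} applied to the defining function of $Z$: the commutativity of $\psi_{g,1}$ and $\phi_{g,1}$ with $\cH^i p_+$ reconstructs all graded pieces of the $V$-filtration on the pushforward from the graded pieces upstairs, and the natural maps $t$ and $\de_t$ on $\mathrm{Gr}_V^\bullet$ remain isomorphisms on each nontrivial piece after pushforward. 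Combining this with strictness in $F$, and restricting attention to the lowest index $p_0$ where the relative de Rham complex collapses, gives the desired equation $F_{p_0} V^\alpha \cH^i p_+ \cM = R^i p_*(F_{p_0} V^\alpha \cM)$.

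The main obstacle is establishing the simultaneous bistrictness of $F$ and $V$ under $p_+$. This is not a formal consequence of strictness of each filtration separately: one must argue that the two spectral sequences degenerate compatibly at $E_1$, which requires that on each $V$-graded piece the induced Hodge filtration agrees with the one coming from the pushforward and vice versa. The conceptual input is the decomposition theorem for pure Hodge modules (ensuring $\cH^i p_+\cM$ is itself a mixed Hodge module equipped with intrinsic $F$ and $V$), together with \theoremref{thm-CyclesPushforward} interlocking the two filtrations upstairs and downstairs. Once bistrictness is in hand, specializing to the lowest Hodge index $p_0$ is immediate.
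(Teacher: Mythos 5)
There is a genuine gap here. The paper does not reprove this statement: its ``proof'' consists of citing \cite{BMS}*{Prop. 3.2} (ultimately Saito's bistrictness of the Hodge and $V$-filtrations under projective pushforward) and observing that the right $\cD$-module formulation is obtained by side-changing, replacing the relative de Rham complex with the relative Spencer complex. Your proposal, by contrast, attempts to rederive the bistrictness, but the derivation is never actually carried out: you correctly identify that the simultaneous compatibility of $F$ and $V$ under $p_+$ is ``the main obstacle'' and that it is not a formal consequence of strictness of each filtration separately, and then you appeal to the decomposition theorem and \theoremref{thm-CyclesPushforward} to ``interlock'' the filtrations. But that compatibility \emph{is} the content of the proposition (specialized to $F_{p_0}$), so at the crucial point nothing has been proven. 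Moreover, \theoremref{thm-CyclesPushforward} only concerns the unipotent nearby and vanishing cycles $\psi_{g,1},\phi_{g,1}$, i.e.\ particular $V$-graded pieces; knowing that graded pieces commute with $\cH^i p_+$ does not recover the equality $V^\alpha \cH^i p_+(\cM) = $ image of $R^i p_*(V^\alpha\cM)$ without precisely the strictness being claimed, and in Saito's development the commutation of nearby/vanishing cycles with pushforward is itself deduced from the bistrictness, so using it as the input risks circularity.

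A second problem is the reduction to $r=1$. The $V$-filtration along $Y_1\times Z$ with $\codim Z = r > 1$ is not obtained by iterating codimension-one $V$-filtrations one coordinate at a time, nor by passing to a single auxiliary hypersurface in $\A^r_t$ in the way you suggest; this is exactly why \cite{BMS} sets up and proves Prop.~3.2 directly in the multivariable setting. The examples in the paper (\exampleref{eg-InvarianceClosedEmbeddings}, \exampleref{eg-HodgeFiltrationClosedEmbedding}) let you transport the filtration across closed embeddings of the \emph{ambient} smooth variety, not reduce the codimension of $Z$. The efficient fix is the paper's own: quote \cite{BMS}*{Prop. 3.2} (or \cite{SaitoMHP}*{3.3.17} when $r=1$) and check only that side-changing converts the de Rham formulation for left modules into the Spencer-complex formulation for right modules, which is where the shift by $p_0$ and the shape of the displayed formula come from.
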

\begin{proof} The reason the formula looks slightly different from that in \emph{loc. cit.} is that we are working with right $\cD$-modules, so rather than using the relative de Rham complex for the pushforward, we use the relative Spencer complex (see \cite{HTT}*{Pg. 44–46}). Then this statement is simply a side-changing of the standard one.
\end{proof}

\begin{eg} \label{eg-mainComputation} The following is the key computation in \cite{BS,BMS} and will be key to our argument as well. We state it in terms of right $\cD$-modules.

Let $\pi \colon Y_1 \to Y_2$ be a projective morphism between smooth varieties and let $f_1,\dots, f_r \in \cO_{Y_2}(Y_2)$. Let $g_i = \pi^*(f_i)$ for $1\leq i\leq r$. Then we have the Cartesian diagram
\[ \begin{tikzcd} Y_1 \ar[r] \ar[d,"\pi"] & Y_1 \times \A^r_t \ar[d,"\pi\times {\rm id}"] \\ Y_2 \ar[r] & Y_2 \times \A^r_t \end{tikzcd},\]
where the horizontal morphisms are the graph embeddings and the vertical morphisms are induced by $\pi$.

Let $(\cM,F)$ be a filtered right $\cD_{Y_1 \times \A^r_t}$-module underlying a mixed Hodge module on $Y_1 \times \A^r_t$. Our goal is to understand $F_{p_0} V^\lambda \cH^i (\pi \times {\rm id})_*(\cM)$ in terms of the $V$-filtration of $\cM$ along $t_1,\dots, t_r$.

To do this, factor $\pi = p \circ i \colon Y_1 \to Y_1\times Y_2 \to Y_2$ so that we have the commutative diagram
\[ \begin{tikzcd} Y_1\times \A^r_t \ar[r,"i \times {\rm id}"] \ar[dr,swap, "\pi\times {\rm id}"] & Y_1 \times Y_2\times \A^r_t \ar[d, "p \times {\rm id}"]\\ {} & Y_2 \times \A^r_t \end{tikzcd}.\]

Although the map $p\times {\rm id}$ need not be projective, it is certainly projective on the support of $M' = (i\times {\rm id})_*(M)$. Moreover, we can view $p \times {\rm id}$ as the second projection. By \propositionref{prop-bistrictness} applied to $M'$ (with underlying filtered right $\cD$-module $(\cM',F)$) and the zero section $Y_1 \times Y_2 \times \{0\}$, we conclude
\[ F_{p_0} V^\alpha \cH^i (p \times {\rm id})_* \cM' = R^i (p\times {\rm id})_*( F_{p_0} V^\alpha \cM' ),\]
and since the $V$-filtration (\exampleref{eg-InvarianceClosedEmbeddings}) and first non-zero piece of the Hodge filtration (\exampleref{eg-HodgeFiltrationClosedEmbedding}) essentially remain unchanged under closed embeddings, we conclude
\begin{equation} \label{eq-BiStrictComputation} F_{p_0} V^\alpha \cH^i p_*\cM = F_{p_0} V^\alpha \cH^i (p \times {\rm id})_* \cM' = R^i p_*(F_{p_0} V^\alpha \cM).\end{equation}
\end{eg}
 
We will use \theoremref{thm-CyclesPushforward} in the following application (see \cite{SaitoKollar}*{Prop. 2.6}, whose proof we give for convenience of the reader):
\begin{lem} \label{lem-lowestHodge} Let $f \colon X \to Y$ be a projective morphism between smooth complex varieties. Let $M$ be a pure Hodge module on $X$ with strict support $Z \subseteq X$. Let $p(M) = \min \{p \mid F_p \cM \neq 0\}$ where $(\cM,F)$ is the filtered right $\cD_X$-module underlying $M$. 

Then if $M'$ is a strict support summand of $\cH^i f_* M$ for some $i\in \Z$ such that ${\rm supp}(M') \neq f(Z)$, we have $p(M') > p(M)$.
\end{lem}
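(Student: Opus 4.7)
I would prove the lemma by applying vanishing cycles. Write $W := {\rm Supp}(M')$, which by hypothesis satisfies $W \subsetneq f(Z)$. Since the assertion is local on $Y$, I may assume $Y$ is affine and choose $g \in \cO_Y(Y)$ with $g\vert_W \equiv 0$ but $g\vert_{f(Z)} \not\equiv 0$. Because $M$ has strict support $Z$, the function $g\circ f$ does not vanish identically on $Z$, so $N := \phi_{g\circ f, 1}M$ is a mixed Hodge module on $X$ supported on $V(g\circ f)\cap Z \subsetneq Z$. Moreover, \theoremref{thm-CyclesPushforward} provides a canonical isomorphism
\[ \phi_{g,1}\cH^i f_* M \cong \cH^i f_* N, \]
under which $\phi_{g,1}M'$, being a strict-support summand of $\phi_{g,1}\cH^i f_*M$ (as ${\rm Supp}(M') \subseteq V(g)$), is identified with a summand of $\cH^i f_* N$.

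The first step is to compare $p(\phi_{g,1}M')$ with $p(M')$. As $\cM'$ is supported on $V(g)$, the $V$-filtration of $\Gamma_+\cM'$ along $\{t=0\}$ (for $\Gamma$ the graph embedding of $g$) satisfies $V^\alpha \Gamma_+\cM' = 0$ for $\alpha > 0$ and $V^0 \Gamma_+\cM' = \Gamma_+\cM'$. Hence the formula for $F_\bullet \phi_{g,1}M'$ collapses to $F_p \phi_{g,1}\cM' = F_p\Gamma_+\cM'$, and the graph-embedding formula $F_p \Gamma_+\cM' = \bigoplus_{k\geq 0} F_{p-k}\cM'\cdot \de_t^k$ of \exampleref{eg-HodgeFiltrationClosedEmbedding} yields $p(\phi_{g,1}M') = p(M')$.

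The second step is to establish $p(N) \geq p(M)+1$. Since $M$ has strict support $Z$ and $V(g\circ f)\cap Z \subsetneq Z$, the canonical morphism ${\rm can}\colon \psi_{g\circ f, 1}M \twoheadrightarrow N$ is surjective as a morphism of mixed Hodge modules and hence strict for the Hodge filtration. Combined with
\[ F_\bullet \psi_{g\circ f, 1}M = \frac{F_{\bullet-1}V^1\Gamma_+\cM}{F_{\bullet-1}V^{>1}\Gamma_+\cM}\]
and the fact that $F_{p-1}\Gamma_+\cM = 0$ for $p \leq p(M)$ (applying the graph-embedding formula of \exampleref{eg-HodgeFiltrationClosedEmbedding} to $\cM$), this forces $F_p N = 0$ for $p \leq p(M)$, as required.

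Finally, strictness of the Hodge filtration under the projective pushforward $f$ (the Hodge-filtration statement contained in \propositionref{prop-bistrictness}) gives $F_p \cH^i f_*\cN = R^i f_* F_p\cN$, hence $p(\cH^i f_* N) \geq p(N) \geq p(M)+1$. Because the strict-support decomposition of $\cH^i f_* N$ is a direct-sum decomposition in the abelian category of Hodge modules, every summand inherits this lower bound; applied to $\phi_{g,1}M'$ and combined with $p(\phi_{g,1}M') = p(M')$, we conclude $p(M') \geq p(M)+1 > p(M)$. The main technical subtlety is the careful bookkeeping of Hodge-filtration shifts through Kashiwara's equivalence and the formulas for nearby/vanishing cycles, but the strict inequality we seek is robust to any off-by-one ambiguities arising from sign conventions.
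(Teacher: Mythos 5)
Your proof is correct and follows essentially the same route as the paper: choose $g$ vanishing on ${\rm supp}(M')$ but not on $f(Z)$, identify $M'$ (via $\phi_{g,1}$) with a direct summand of $\cH^i f_*\phi_{g\circ f,1}M$ using \theoremref{thm-CyclesPushforward}, then combine surjectivity of ${\rm can}$ (strict support), the $V$-filtration formulas for $F_\bullet\psi_{g\circ f,1}$ and $F_\bullet\phi_{g\circ f,1}$, and strictness under projective pushforward. The only blemish is the assertion $V^0\Gamma_+\cM'=\Gamma_+\cM'$ in your first step — in the paper's conventions (\exampleref{eg-Kashiwara}) $V^0$ is only the $\de_t$-degree-zero part, the $\de_t^k$ terms lying in $V^{-k}$ — but since $V^{>0}\Gamma_+\cM'=0$ and $F_pV^0\Gamma_+\cM'=F_p\cM'$, your conclusion $p(\phi_{g,1}M')=p(M')$ (indeed $\phi_{g,1}M'\cong M'$, which the paper invokes directly) is unaffected.
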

\begin{proof} Let $M'$ be a strict support summand of $\cH^i f_* M$ with ${\rm supp}(M') \neq f(Z)$. Let $g \in \cO_Y$ be a locally defined function which vanishes on ${\rm supp}(M')$ and not on $f(Z)$. In particular, $Z$ is not contained in the zero locus of $g\circ f$.

As ${\rm supp}(M') \subseteq g^{-1}(0)$, we see that $\phi_{g,1}(M') \cong M'$. Thus, we see that $M'$ is a direct summand of $\phi_{g,1} \cH^i f_* M$. By the compatibility of vanishing cycles with projective pushforwards, we see that $M'$ is a direct summand of $\cH^i f_* \phi_{g\circ f,1}(M)$.

Finally, as $M$ has strict support not contained in $(g\circ f)^{-1}(0)$, we see that the natural morphism
\[ {\rm can} \colon \psi_{g\circ f,1}(M) \to \phi_{g\circ f,1}(M)\]
is surjective. For the underlying filtered $\cD$-modules, let $\Gamma$ be the graph embedding along $g\circ f$. Then this says that
\[  \frac{F_{p(M)} V^0 \Gamma_+ \cM}{F_{p(M)} V^{>0}\Gamma_+ \cM}  = F_{p(M)} \phi_{t,1}(\Gamma_+ \cM) = F_{p(M)} \psi_{t,1}(\Gamma_+ \cM) \cdot \de_t = \frac{F_{p(M)-1} V^1\Gamma_+ \cM}{F_{p(M)-1} V^{>1}\Gamma_+ \cM}   =  0,\]
where the vanishing of the right hand side follows from definition of $p(M)$.

Thus, $p(\phi_{g\circ f,1}(M)) > p(M)$, and so we conclude by
\[ p(M') \geq p(\cH^i f_* \phi_{g\circ f,1}(M)) \geq p(\phi_{g\circ f,1}(M)) > p(M).\]
\end{proof}

We conclude with some lemmas from the theory of filtered $\cD$-modules which will be useful in the proofs of the main results. 

Let $Z\subseteq Y$ be smooth varieties. For a filtered $\cD_Y$-module $\cM$ let $p(\cM) = \min\{p \mid F_p \cM \neq 0\}$. Define $G^\bullet_Z \cM = (F_{p(\cM)}\cM)\cdot V_Z^\bullet \cD_Y$, which is not necessarily an exhaustive filtration. We let $b_{Z,\cM}(s)$ denote the minimal polynomial of the action of $\theta$ on ${\rm Gr}_{G_Z}^0 \cM$, where $\theta$ is any derivation in $V^0_Z\cD_Y$ which acts as the identity of $\cI_Z/\cI_Z^2$. The main example of $\theta$ for us will be the Euler operator $\sum t_i \de_{t_i}$ when $Z \subseteq Y$ is defined in a local partial system of coordinates by $t_1,\dots, t_r$.

It is not hard to see that
\[ ({\rm Gr}_{G_Z}^j \cM) b_{Z,\cM}(\theta+j) = 0 \text{ for all } j \in \Z.\]

\begin{eg}\label{eg-ZeroEmbedding} Let $i \colon Z \to Y$ be a closed embedding of smooth varieties. Let $(\cM,F) = i_*(\cM',F)$ for some filtered $\cD_Z$-module $(\cM',F)$. Then $b_{Z,\cM}(s) = s$.

Indeed, assume $Z$ is defined inside $Y$ by a partial system of coordinates $t_1,\dots, t_r$ on $Y$, with corresponding vector fields $\de_{t_1},\dots, \de_{t_r}$. Then we can write
\[ \cM = i_+ \cM = \bigoplus_{\alpha \in \N^r} \cM' \delta \de_t^\alpha\]
where $m \delta t_i = 0$ for $1\leq i\leq r$ and any $m\in \cM$. Also,
\[ F_p \cM = \bigoplus_{\alpha \in \N^r} F_{p-|\alpha|} \cM'\delta \de_t^\alpha.\]

In particular, we see that $p(\cM) = p(\cM')$, which we denote by $p$, and moreover that $F_p \cM = (F_p \cM')\delta$. Thus,
\[ G^k_Z \cM = \begin{cases} (F_p \cM' \cdot \cD_Z)\delta & k = 0 \\ 0 & k =1 \end{cases},\]
and we conclude that $({\rm Gr}_{G_Z}^0 \cM) \theta = (G_Z^0 \cM)\theta = 0$ as $\theta = \sum t_i \de_{t_i}$. This proves $b_{Z,\cM}(s) = s$.
\end{eg}

\begin{lem} \label{lem-sameSubvariety} Let $Z\subseteq Y_1 \subseteq Y_2$ be smooth varieties and denote $i\colon Y_1 \to Y_2$ the closed embedding. Let $\cM$ be a $\cD_{Y_1}$-module. Then
\[F_{p(\cM)} V_Z^\bullet i_+ \cM = F_{p(\cM)} V_Z^\bullet \cM\]
and
\[ b_{Z,\cM}(s) = b_{Z,i_+ \cM}(s).\]
\end{lem}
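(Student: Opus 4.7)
The plan is to work locally and identify both the $V$-filtration and the $G$-filtration on $i_+\cM$ directly from those of $\cM$. Choose coordinates $y_1,\dots,y_c,z_1,\dots,z_r,u_1,\dots,u_s$ on $Y_2$ so that $Y_1 = V(y_1,\dots,y_c)$ and $Z = V(y_1,\dots,y_c,z_1,\dots,z_r)$, so $Z$ has codimension $r$ in $Y_1$ and codimension $c+r$ in $Y_2$. By \exampleref{eg-Kashiwara} we have $i_+\cM = \bigoplus_{\alpha\in\N^c}\cM\de_y^\alpha$, and by \exampleref{eg-HodgeFiltrationClosedEmbedding} the Hodge filtration is $F_p i_+\cM = \bigoplus_\alpha F_{p-|\alpha|}\cM\cdot\de_y^\alpha$. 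In particular $p(i_+\cM) = p(\cM)$ and $F_{p(\cM)}i_+\cM$ is precisely the $\alpha = 0$ summand $F_{p(\cM)}\cM\cdot 1$.

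The key step will be the formula
\[V^\lambda_Z i_+\cM = \bigoplus_{\alpha\in\N^c} V^{\lambda+|\alpha|}_Z\cM\cdot\de_y^\alpha,\]
where on the right we use the $V$-filtration along $Z\subseteq Y_1$. I will verify that the right-hand side satisfies the defining axioms and conclude by uniqueness. The nilpotency is where the index-shift by $|\alpha|$ comes from: setting $\theta_2 = \sum_i y_i\de_{y_i} + \sum_j z_j\de_{z_j}$ and $\theta_1 = \sum_j z_j\de_{z_j}$, the commutation $\de_y^\alpha y_i = y_i\de_y^\alpha + \alpha_i\de_y^{\alpha-e_i}$ together with $my_i = 0$ for $m\in\cM$ yields $(m\de_y^\alpha)\theta_2 = |\alpha|(m\de_y^\alpha) + (m\theta_1)\de_y^\alpha$, and iterating gives $(m\de_y^\alpha)P(\theta_2) = (mP(\theta_1 + |\alpha|))\de_y^\alpha$ for any polynomial $P$. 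Thus nilpotency of $\theta_1 + (\lambda + |\alpha|)$ on $\mathrm{Gr}^{\lambda+|\alpha|}_V\cM$ passes directly to nilpotency of $\theta_2 + \lambda$ on each summand. Coherence over $V^0_Z\cD_{Y_2}$ and the stability $V^j_Z\cD_{Y_2}\cdot V^\lambda_Z i_+\cM\subseteq V^{\lambda+j}_Z i_+\cM$, with equality for $j\geq 0$ and $\lambda\gg 0$, follow by a bookkeeping argument from the corresponding properties for $V^\bullet_Z\cM$ on $Y_1$, using that $\de_{y_i}$ shifts $\alpha\mapsto\alpha+e_i$ and the $V$-index by $-1$ while $y_i$ acts as zero on $\cM$. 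Granting the formula, the first equality in the lemma is then immediate: since $F_{p(\cM)}i_+\cM$ lies in the $\alpha = 0$ summand, intersecting with $V^\lambda_Z i_+\cM$ picks out $F_{p(\cM)}\cM\cap V^\lambda_Z\cM = F_{p(\cM)}V^\lambda_Z\cM$.

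For the $b$-function, an analogous computation will yield $G^k_Z i_+\cM = \bigoplus_\alpha G^{k+|\alpha|}_Z\cM\cdot\de_y^\alpha$: if $P\in V^k_Z\cD_{Y_2}$ is written in normal form $\sum h_{\alpha\beta\gamma}\de_y^\alpha\de_z^\beta\de_u^\gamma$ with $\mathrm{ord}_{\cI_Z}(h) - |\alpha| - |\beta|\geq k$, its action on $m\in F_{p(\cM)}\cM$ kills every term whose coefficient contains any $y_i$, and the surviving contributions lie in $F_{p(\cM)}\cM\cdot V^{k+|\alpha|}_Z\cD_{Y_1}$ in the $\alpha$-component (since setting $y = 0$ can only increase the $(z)$-order of $h$). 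Passing to the graded piece, $\mathrm{Gr}^0_{G_Z}i_+\cM = \bigoplus_\alpha \mathrm{Gr}^{|\alpha|}_{G_Z}\cM\cdot\de_y^\alpha$, and applying $(m\de_y^\alpha)P(\theta_2) = (mP(\theta_1+|\alpha|))\de_y^\alpha$ with $P = b_{Z,\cM}$, together with $(\mathrm{Gr}^{|\alpha|}_{G_Z}\cM)b_{Z,\cM}(\theta_1+|\alpha|) = 0$, shows $b_{Z,\cM}(\theta_2) = 0$ on each summand. Meanwhile the $\alpha = 0$ summand, on which $\theta_2$ acts as $\theta_1$, is exactly $\mathrm{Gr}^0_{G_Z}\cM$ and realizes $b_{Z,\cM}$ as minimal, hence $b_{Z,i_+\cM}(s) = b_{Z,\cM}(s)$.

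The main technical obstacle I expect is the complete verification of the candidate $V$-filtration formula, particularly the stability property with equality $V^j_Z\cD_{Y_2}\cdot V^\lambda_Z i_+\cM = V^{\lambda+j}_Z i_+\cM$ for $j\geq 0$ and $\lambda\gg 0$; the nilpotency falls out of the commutation calculation cleanly, but ensuring that operators in $V^0_Z\cD_{Y_2}$ which simultaneously shift the $\alpha$-index and the $V$-index along $Y_1$ give no more and no less than claimed requires careful case analysis rather than genuinely new ideas.
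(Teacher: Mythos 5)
Your proposal is correct and follows essentially the same route as the paper's proof: local coordinates with $i_+\cM=\bigoplus_\alpha \cM\,\de_y^\alpha$, the commutation relation giving $(m\de_y^\alpha)\theta = (m(\theta_t+|\alpha|))\de_y^\alpha$, the resulting shift formulas $V^\lambda_Z i_+\cM=\bigoplus_\alpha V^{\lambda+|\alpha|}_Z\cM\,\de_y^\alpha$ and $G^k_Z i_+\cM=\sum_\alpha G^{k+|\alpha|}_Z\cM\,\de_y^\alpha$, and the observation that the lowest Hodge piece sits in the $\alpha=0$ summand. The only difference is presentational: you invoke uniqueness of the $V$-filtration and flag the coherence/stability checks explicitly, where the paper simply asserts the formulas after the same Euler-operator computation.
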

\begin{proof} We take local coordinates on $Y_2$ such that $Y_1$ is defined by $y_1,\dots, y_c$ and $Z$ is defined by $y_1,\dots, y_c, t_1,\dots, t_r$ (so that $Z$ is defined inside $Y_1$ by the restriction of $t_1,\dots, t_r$). 

In these local coordinates, we have
\[ i_+ \cM = \bigoplus_{\alpha \in \N^c} \cM \delta\de_y^\alpha\]
and
\[ F_p i_+\cM = \bigoplus_{\alpha \in \N^c} F_{p-|\alpha|} \cM \delta\de_y^\alpha,\]
where $\delta$ is a formal symbol such that $m\delta y_i = 0$ for all $1\leq i\leq c$.

The $V$-filtration $V^\bullet_Z i_+\cM$ concerns the Euler operator $\theta_y + \theta_t$, and the $V$-filtration $V^\bullet_Z \cM$ only concerns $\theta_t$. We have
\begin{equation} \label{eq-EulerOperator} m\delta \de_y^\alpha (\theta_y+\theta_t) = (m\delta(\theta_y + |\alpha| + \theta_t)) \de_y^\alpha= (m (\theta_t + |\alpha|)\delta) \de_y^\alpha,\end{equation}
using that $m\delta y_i = 0$ for $1\leq i \leq c$, and so it is not hard to check that
\[ V_Z^\bullet i_+ \cM = \bigoplus_{\alpha \in \N^c} V_Z^{\bullet+|\alpha|}\cM \delta \de_y^\alpha.\]

This, along with the formula for the Hodge filtration, gives the first claimed equality.

For the $b$-function claim, note that
\[ (F_{p(\cM)}\cM \delta)\cdot V_Z^i \cD_{Y_2} = \sum_{\alpha \in \N^c} (F_{p(\cM)}\cM \cdot V_Z^{i+|\alpha|}\cD_{Y_1}\delta) \de_y^\alpha,\]
for all $i\geq 0$, or in other words,
\[ G_Z^i i_+ \cM = \sum_{\alpha \in \N^c} ( G_Z^{i+|\alpha|} \cM \delta)\de_y^\alpha,\]
and so the claim follows once again by the relation \ref{eq-EulerOperator} and the fact that 
\[ ({\rm Gr}_{G_Z}^j \cM)\cdot b_{Z,\cM}(\theta_t+j) =0.\]
\end{proof}

\begin{lem} \label{lem-sameFunctions} Let
\[ \begin{tikzcd} Z\cap Y_1 \ar[r] \ar[d] & Z\ar[d] \\ Y_1 \ar[r] & Y_2\end{tikzcd}\]
be a Cartesian diagram of smooth closed subvarieties in a smooth variety $Y_2$ and let $i\colon Y_1 \to Y_2$ be the closed embedding. Let $\cM$ be a $\cD_{Y_1}$-module with pushforward $i_+ \cM$. Then
\[ F_{p(\cM)} V_Z^\bullet i_+ \cM =F_{p(\cM)} V_{Z\cap Y_1}^{\bullet}\cM \]
and
\[ b_{Z\cap Y_1,\cM}(s) = b_{Z,i_+\cM}(s).\]
\end{lem}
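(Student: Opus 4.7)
The plan follows the pattern of \lemmaref{lem-sameSubvariety}, but now with the convenience that the embedding-direction coordinates and the $Z$-transverse coordinates can be chosen disjoint, thanks to transversality. The argument will rest on a judicious choice of local coordinates followed by an explicit computation of the two sides.

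First, I would exploit the fact that the Cartesian condition among smooth varieties, combined with $Z\cap Y_1$ being smooth of codimension equal to $\codim_{Y_2}(Z)$, forces $Y_1$ and $Z$ to meet transversely in $Y_2$. Working locally on $Y_2$, this allows me to pick coordinates $y_1,\dots,y_c,\,t_1,\dots,t_r,\,w_1,\dots,w_s$ such that $Y_1=V(y_1,\dots,y_c)$ and $Z=V(t_1,\dots,t_r)$. The restrictions of $t_1,\dots,t_r$ then cut out $Z\cap Y_1$ inside $Y_1$, and the operator $\theta:=\sum_{j=1}^r t_j\partial_{t_j}$ lies in $V^0_Z\cD_{Y_2}$, acts as the identity on $\cI_Z/\cI_Z^2$, and after restriction to $Y_1$ plays the analogous role for $V^\bullet_{Z\cap Y_1}$.

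Next, I would compute both filtrations simultaneously via
\[ i_+\cM=\bigoplus_{\alpha\in\N^c}\cM\,\delta\,\partial_y^\alpha,\qquad F_p\,i_+\cM=\bigoplus_{\alpha\in\N^c} F_{p-|\alpha|}\cM\,\delta\,\partial_y^\alpha.\]
The key point absent from \lemmaref{lem-sameSubvariety} is that because the $y$- and $t$-variables are decoupled, each of $y_j,\partial_{y_j}$ commutes with $t_k,\partial_{t_k}$ and in particular with $\theta$; hence multiplication by $\delta\partial_y^\alpha$ preserves $V$-degree. Verifying the three defining properties of the $V$-filtration as in \exampleref{eg-InvarianceClosedEmbeddings} then yields
\[ V^\lambda_Z\,i_+\cM=\bigoplus_{\alpha\in\N^c} V^\lambda_{Z\cap Y_1}\cM\,\delta\,\partial_y^\alpha,\]
and intersecting with $F_{p(\cM)}$ kills the summands with $|\alpha|>0$, producing the first equality. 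For the $b$-function comparison, the generators of $V^\bullet_Z\cD_{Y_2}$ that involve only $y$-variables sit in $V$-degree $0$ and produce $\delta\partial_y^\alpha$ upon acting on $\delta$, so the same bookkeeping gives
\[ G_Z^i\,i_+\cM=\bigoplus_{\alpha\in\N^c} G_{Z\cap Y_1}^i\cM\,\delta\,\partial_y^\alpha.\]
Since $\theta$ commutes with each $\partial_y^\alpha$, its action on each $\alpha$-summand of ${\rm Gr}^0_{G_Z}(i_+\cM)$ is identified with its action on ${\rm Gr}^0_{G_{Z\cap Y_1}}\cM$, forcing the minimal polynomials to coincide.

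The only delicate point will be Step~1: verifying that the Cartesian plus smoothness hypothesis really produces local coordinates simultaneously trivializing $Y_1$ and $Z$. Once this transversality statement is in place, the rest is direct bookkeeping strictly parallel to \lemmaref{lem-sameSubvariety}, made simpler by the fact that the $y$- and $t$-variables are now independent.
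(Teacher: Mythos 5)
Your proposal is correct and follows essentially the same route as the paper: choose local coordinates on $Y_2$ in which $Y_1=V(y_1,\dots,y_c)$ and $Z=V(t_1,\dots,t_r)$, write $i_+\cM=\bigoplus_\alpha \cM\,\delta\,\de_y^\alpha$ with its Hodge filtration, and use that $\theta_t$ commutes with $\delta\de_y^\alpha$ to identify both the $V$-filtration and the $G$-filtration summand by summand. The transversality point you isolate in Step~1 is exactly what the paper's coordinate choice implicitly assumes (it follows from the Cartesian hypothesis once $Z\cap Y_1$ is smooth of codimension $r$ in $Y_1$, as in the intended applications), so flagging it is reasonable but does not change the argument.
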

\begin{proof} This is also a local computation, so we take local coordinates on $Y_2$ such that $Y_1$ is defined by $y_1,\dots, y_c$ inside $Y_2$ and $Z$ is defined by $t_1,\dots, t_r$, so that $Z\cap Y_1$ is defined by $y_1,\dots, y_c, t_1,\dots, t_r$.

In these coordinates, we have
\[ i_+ \cM = \bigoplus_{\alpha \in \N^c} \cM \delta \de_y^\alpha\]
where $m\delta y_i = 0$ for all $1\leq i\leq c$ and the Hodge filtration is given by
\[ F_p i_+ \cM = \bigoplus_{\alpha \in \N^c} (F_{p-|\alpha|} \cM \delta)\de_y^\alpha.\]

It is easy to see, then, that
\[ V_Z^\bullet i_+\cM = \bigoplus_{\alpha \in \N^c} (V_{Z\cap Y_1}^\bullet \cM \delta)\de_y^\alpha\]
because
\[ (m\delta)\de_y^\alpha \theta_t = m\theta_t \delta \de_y^\alpha,\]
 so we get the first claim.

For the $b$-function claim, note that
\[ (F_{p(\cM)}\cM \delta)\cdot V_Z^i \cD_{Y_2} = \sum_{\alpha \in \N^c} (F_{p(\cM)}\cM \cdot V_{Z\cap Y_1}^i\cD_{Y_1}\delta) \de_y^\alpha,\]
for all $i\geq 0$, or in other words,
\[ G_Z^i i_+ \cM = \sum_{\alpha \in \N^c} ( G_Z^{i} \cM \delta)\de_y^\alpha,\]
and so the claim follows due to the fact that $\theta_t$ commutes with $\de_y^\alpha$.
\end{proof}

\section{Proofs} \label{sec-proofs}
In this section, we prove the main theorem and its corollaries. As mentioned above, the proof of the main theorem is essentially the same as that in \cite{BS,BMS}, with the caveat that we use \lemmaref{lem-lowestHodge} instead of a restriction on the $V$-filtration pieces in \emph{loc. cit.}. 

\begin{proof}[Proof of \theoremref{thm-main}] As in the statement, let $f_1,\dots, f_r \in \cO_X(X)$ and denote by the same symbol their extensions to $Y$. Let $\Gamma \colon Y \to Y \times \A^r_t$ be the graph embedding along $f_1,\dots, f_r$.

Let $\pi \colon \widetilde{X} \to X$ denote a log resolution of $(X,\fra)$, so that $\widetilde{X}$ is smooth and $\fra \cdot \cO_{\widetilde{X}} =  \cO_{\widetilde{X}}(-F)$ where $F$ is a divisor on $\widetilde{X}$ with SNC support. We let $g_i = \pi^*(f_i)$ for $1\leq i \leq r$ and let $\Gamma_g \colon \widetilde{X} \to \widetilde{X}\times \A^r_t$ be the graph embedding.

As in \cite{BMS}*{Pf. of Theorem 1}, we consider the diagram
\[ \begin{tikzcd} & \widetilde{X}'\ar[d,"i'"] \\ \widetilde{X} \ar[r,"\Gamma_g"] \ar[d]  \ar[ru,"i_{g'}"] & \widetilde{X}\times \A^r_t \ar[d]\\ Y \ar[r] & Y \times \A^r_t \end{tikzcd}.\]

The space $\widetilde{X}'$ is the total space of the line bundle $\cO_{\widetilde{X}}(F)$ on $\widetilde{X}$, the morphism $i' \colon \widetilde{X}' \to \widetilde{X}\times \A^r_t$ is an inclusion of vector bundles and the map $i_{g'} \colon \widetilde{X} \to \widetilde{X}'$ is induced by the section $\cO_{\widetilde{X}} \to \cO_{\widetilde{X}}(F)$ given by the containment $\cO_{\widetilde{X}}(-F) \to \cO_{\widetilde{X}}$. In particular, if $g'$ is a local defining equation for $F$, which trivializes $\cO_{\widetilde{X}}(F)$, then the map $i_{g'} \colon \widetilde{X} \to \widetilde{X}'$ looks like the graph embedding for $g'$, which explains the notation.

Importantly, $\widetilde{X}$ also sits inside $\widetilde{X}'$ and $\widetilde{X}\times \A^r_t$ as the zero section in a compatible way. By \lemmaref{lem-sameSubvariety}, the $V$-filtration along this zero section is essentially unchanged when applying the closed embedding $i'_+$. By \cite{BS}*{Thm. 0.1}, we see that
\[ F_{-\dim X} V^\alpha  i_{g',+} \omega_{\widetilde{X}} = \omega_{\widetilde{X}}(-\lfloor (\alpha -\varepsilon)F\rfloor),\]
(which follows by an explicit computation for normal crossing type Hodge modules \cite{SaitoMHM}*{(3.5.1)}) and hence by applying $i'_+$, we conclude
\[ F_{-\dim X} V^\alpha \Gamma_{g,+} \omega_{\widetilde{X}} = \omega_{\widetilde{X}}(-\lfloor (\alpha - \varepsilon)F\rfloor).\]

We conclude by the identification \ref{eq-BiStrictComputation} that
\[ F_{-\dim X} V^\alpha \cH^0 (\pi \times {\rm id})_+( \omega_{\widetilde{X}}) = \pi_*( \omega_{\widetilde{X}}(-\lfloor (\alpha - \varepsilon)F\rfloor)).\]

Finally, the canonical map $\Q_X^H[\dim X ] \to \pi_* \Q^H_{\widetilde{X}}[\dim X]$ gives a canonical map
\[ {\rm IC}_X^H \to \cH^0 \pi_* \Q^H_{\widetilde{X}}[\dim X]\]
of pure Hodge modules of weight $\dim X$. Importantly, if we apply $F_{-\dim X}$, we get an isomorphism by \lemmaref{lem-lowestHodge}
\[ F_{-\dim X} {\rm IC}_X^H \cong F_{-\dim X} \cH^0\pi_* \Q^H_{\widetilde{X}}[\dim X],\]
and hence if we apply $V^\alpha \Gamma_+$ to this equality, we conclude the desired equality.
\end{proof}

\begin{rmk} \label{rmk-SameProof} In the original argument of \cite{BMS}, where the variety $X$ is smooth, a strong log resolution was used. Thus, the map $\pi \colon \widetilde{X} \to X$ in that setting was an isomorphism over the complement of $\{f=0\}$ in $X$. The argument in \emph{loc. cit.} differed from ours in this way, because rather than invoking \lemmaref{lem-lowestHodge}, they were able to conclude using the fact that $V^\alpha \Gamma_+ \cM = 0$ for $\alpha > 0$, when $\cM$ is supported on $\{f = 0\}$ (here $\Gamma_+$ is the graph embedding along $f$), which follows by Example \ref{eg-Kashiwara}. This would apply here if $X_{\rm sing} \subseteq V(f)$, and is the essential difference in our arguments.
\end{rmk}

\begin{proof}[Proof of \corollaryref{cor-Skoda}] We have by \cite{CD}*{Thm. 1.1} the exactness of the Koszul-like complexes
\begin{equation} \label{eq-complexA} F_{-\dim X} V^\lambda \cM \xrightarrow[]{t} \bigoplus_{i=1}^r F_{-\dim X} V^{\lambda+1} \cM \xrightarrow[]{t}  \dots \xrightarrow[]{t}  F_{-\dim X} V^{\lambda+r} \cM\end{equation}
\begin{equation} \label{eq-complexB} F_{-\dim X} {\rm Gr}_V^\lambda \cM \xrightarrow[]{t} \bigoplus_{i=1}^r F_{-\dim X} {\rm Gr}_V^{\lambda+1} \cM\xrightarrow[]{t}\dots \xrightarrow[]{t}  F_{-\dim X} {\rm Gr}_V^{\lambda+r} \cM\end{equation}
for any $\lambda > 0$ (we recall that we are indexing in a non-standard way for right $\cD$-modules). 

The first claim follows by the main theorem, which implies that $\lambda$ is a jumping number of $\cJ(\omega_X,\fra^{\bullet})$ if and only if $F_{-\dim X}{\rm Gr}_V^{\lambda} \Gamma_+ {\rm IC}_X \neq 0$ and by injectivity on the left in the complex \ref{eq-complexB}, and the second claim follows by the main theorem and surjectivity on the right in the complex \ref{eq-complexA}.

The third claim is also local. Let $\lceil {\rm lct}(\omega_X,\fra)\rceil = j$. In particular, ${\rm lct}(\omega_X,\fra) > j-1$, so $\cJ(\omega_X,\fra^\mu) = \omega_X^{\rm GR}$ for all $\mu \leq j-1 + \varepsilon'$ for any $0 < \varepsilon' \ll 1$. Let $0 < \varepsilon_1 < \varepsilon_2 \ll 1$ and consider the exact complex 
\[ F_{-\dim X} V^{\varepsilon_2} \Gamma_+ {\rm IC}_X^H \xrightarrow[]{t} \bigoplus_{i=1}^r F_{-\dim X} V^{\varepsilon_2+1} \Gamma_+ {\rm IC}_X^H \xrightarrow[]{t}  \dots \xrightarrow[]{t}  F_{-\dim X} V^{\varepsilon+r}\Gamma_+ {\rm IC}_X^H,\]
which by the definition of the $\cD$-module action on $\Gamma_+ {\rm IC}_X^H$ and \theoremref{thm-main} can be identified with the Koszul-like complex
\[ F_{-\dim X} \cJ(\omega_X,\fra^{\varepsilon_2-\varepsilon_1}) \xrightarrow[]{f} \bigoplus_{i=1}^r \cJ(\omega_X,\fra^{\varepsilon_2-\varepsilon_1+1}) \xrightarrow[]{f}  \dots \xrightarrow[]{f}  \cJ(\omega_X,\fra^{\varepsilon_2-\varepsilon_1+r}),\]
by making $\varepsilon_2 - \varepsilon_1 \leq \varepsilon'$, the first $j+1$ terms of this complex are (by the assumption on ${\rm lct}(\omega_X,\fra)$)
\[ \omega_X^{\rm GR} \xrightarrow[]{f_i} \bigoplus_{i=1}^r \omega_X^{\rm GR} \to \dots \to \bigoplus_{|I| = j-1} \omega_X^{\rm GR} \to \bigoplus_{|I| = j} \cJ(\omega_X,\fra^{\varepsilon_2 -\varepsilon_1+j}),\]
which is exact. Thus, if we localize at any $\frm$ a maximal ideal sheaf containing $\fra$ (equivalently, take the stalk at any closed point $x \in V(\fra) \subseteq X$), then we conclude by the Koszul complex characterization of depth.
\end{proof}

We next prove \corollaryref{cor-DualDuBois}. We remind the reader of the definition of the invariant ${\rm HRH}(X)$ for an equidimenisonal variety $X$. For general properties, see \cite{DOR}.

For any complex variety $X'$, write $\mathbf D_{X'} = \mathbf D(\Q_{X'}^H[\dim X'])(-\dim X')$. Using the fact that ${\rm Gr}^F_{-p} {\rm DR}(\Q_X^H[\dim X]) = \underline{\Omega}_X^p[\dim X-p]$, we see that
\[ {\rm Gr}^F_{-p} {\rm DR}(\mathbf D_X^H) = \mathbb D(\underline{\Omega}_X^{\dim X-p})[-p].\]

\begin{defi} \label{defi-HRH} For an equidimensional variety $X$, we have the natural morphism
\[ \psi_X \colon \Q_X^H[\dim X] \to \mathbf D_X^H\]
and we define
\[ {\rm HRH}(X) = \sup\{k \mid {\rm Gr}^F_{-p}(\psi_X) \text{ is a quasi-isomorphism for all } p \leq k\}.\]

Equivalently, if $\gamma_X^{\vee} \colon {\rm IC}_X^H \to \mathbf D_X^H$ is the Tate-twisted dual of the map $\gamma_X \colon \Q_X^H[\dim X] \to {\rm IC}_X^H$, then 
\[ {\rm HRH}(X) = \sup\{k\mid {\rm Gr}^F_{p-\dim X}(\gamma_X^{\vee}) \text{ is a quasi-isomorphism for all } p \leq k\}.\]
\end{defi}

We see by the second characterization that ${\rm HRH}(X) \geq 0$ is equivalent to the claim that the map $\omega_{X}^{\rm GR} \to \mathbb D(\underline{\Omega}_X^0)[-\dim X]$ is a quasi-isomorphism. In particular, $\mathbb D(\underline{\Omega}_X^0)[-\dim X]$ is a coherent sheaf, hence $\underline{\Omega}_X^0$ is a maximal Cohen-Macaulay object in $D^b_{\rm coh}(\cO_X)$. In fact, the second characterization shows that the condition ${\rm HRH}(X) \geq k$ is equivalent to $X$ satisfying $(*)_k$ in \cite{ParkPopa}.

We also define the related invariant $c(X)$ defined in \cite{CDO2}, again assuming $X$ is equidimensional. This is a Hodge theoretic weakening of the condition that $\Q_X[\dim X]$ is a perverse sheaf.
\begin{defi} Let $X$ be equidimensional. Then we let
\[ c(X) = \sup \{k \mid {\rm Gr}^F_{-p} {\rm DR}(\tau^{<0} \Q_X^H[\dim X]) = 0\text{ for all } p \leq k\},\]
or, equivalently, by duality,
\[ c(X) = \sup \{k \mid {\rm Gr}^F_{p-\dim Z} {\rm DR}(\cH^0 \mathbf D_X^H) \cong {\rm Gr}^F_{p-\dim Z} {\rm DR}(\mathbf D_X^H) \text{ for all } p\leq k\}.\]
\end{defi}

These invariants are related by the inequality $c(X) \geq {\rm HRH}(X)$. We first prove a lemma concerning a restriction property, of independent interest. It is analogous to the property that if $Z$ is defined by a regular sequence inside a Cohen-Macaulay variety $X$, then $Z$ is Cohen-Macaulay. Indeed, the condition $c(X) \geq k$ is equivalent to ${\rm depth}\, \underline{\Omega}_X^p \geq \dim X - p$ for all $p\leq k$ by \cite{CDO2}*{Thm. A}.

\begin{lem} \label{lem-adjunction} Assume $X$ is irreducible and $f_1,\dots, f_r \in \cO_X(X)$ form a regular sequence defining $Z\subseteq X$ reduced of pure codimension $r$. Then $c(X) \leq c(Z)$.
\end{lem}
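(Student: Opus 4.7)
The plan is to use the depth characterization of the invariant $c$ from \cite{CDO2}*{Thm.~A}, which asserts that $c(X') \geq k$ if and only if $\mathrm{depth}\,\underline{\Omega}_{X'}^p \geq \dim X' - p$ for all $p \leq k$. Setting $k = c(X)$, one has the resulting depth inequalities on $X$ for each $p \leq k$, and the task becomes to deduce the analogous bound $\mathrm{depth}\,\underline{\Omega}_Z^p \geq \dim Z - p$ on $Z$.

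The key tool is a Koszul-type comparison between $Li^* \underline{\Omega}_X^p$ and $\underline{\Omega}_Z^p$ adapted to the regular sequence $f_1,\dots, f_r$. In the smooth case, $\Omega_X^p\vert_Z$ carries the familiar conormal filtration with graded pieces $\Omega_Z^{p-q} \otimes \wedge^q N^*_{Z/X}$; I would argue that an analogous finite filtration on $Li^* \underline{\Omega}_X^p$ in $D^b_{\mathrm{coh}}(\cO_Z)$, with graded pieces $\underline{\Omega}_Z^{p-q} \otimes \wedge^q N^*$ (where $N^* = \oplus_i \cO_Z\cdot \bar f_i$), persists in the singular setting provided $f_1,\dots,f_r$ is a regular sequence on $\cO_X$. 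A natural route is to reduce to the hypersurface case $r=1$ by induction and use the vanishing cycles triangle $i^*\Q_X^H[\dim X] \to \psi_{f_1}\Q_X^H[\dim X] \to \phi_{f_1}\Q_X^H[\dim X] \to $ (together with the compatibility between $\mathrm{Gr}^F\mathrm{DR}$ and the standard triangles in $\mathrm{MHM}$) to extract a distinguished triangle of the form $\underline{\Omega}_Z^{p-1} \to Li^* \underline{\Omega}_X^p \to \underline{\Omega}_Z^p \to $ on the Hodge-graded de Rham side. Iterating this triangle $r$ times along the regular sequence recovers the Koszul-type filtration in the general case.

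With such a filtration in hand, the remainder is a depth computation. The regularity of $f_1,\dots, f_r$ on $\cO_X$ combined with $\mathrm{depth}\,\underline{\Omega}_X^p \geq \dim X - p$ (which in the non-vacuous range $\dim X - p \geq r$ gives enough depth for the sequence to remain regular on $\underline{\Omega}_X^p$ at the relevant stalks of $Z$) yields $\mathrm{depth}\, Li^* \underline{\Omega}_X^p \geq \dim X - p - r = \dim Z - p$ via the standard Koszul argument. An induction on $p$, using the triangle together with the inductive hypothesis on $\underline{\Omega}_Z^{p-1}$, then transports the depth bound to $\underline{\Omega}_Z^p$, completing the proof. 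The main obstacle I anticipate is not the depth bookkeeping but rather the verification that $f_1,\dots, f_r$ remains a regular sequence on $\underline{\Omega}_X^p$ itself; this is not automatic from regularity on $\cO_X$, and I expect it will have to be extracted from the observation that $(f_1,\dots, f_r)$ cuts out a codimension-$r$ subscheme along which $\underline{\Omega}_X^p$ has the maximum depth allowed by Krull, so that any minimal generating system of this height-$r$ ideal is regular on the module by the standard uniqueness of regular sequences on Cohen-Macaulay-type modules.
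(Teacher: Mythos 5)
Your argument has a genuine gap at its central step: the claimed conormal/Koszul-type comparison between $Li^*\underline{\Omega}_X^p$ and the $\underline{\Omega}_Z^{p-q}$ does not exist in general. Already for $r=1$, $p=0$ and $X$ smooth, your proposed triangle $\underline{\Omega}_Z^{p-1}\to Li^*\underline{\Omega}_X^p\to \underline{\Omega}_Z^p\to$ would read $0\to \cO_Z\to \underline{\Omega}_Z^0\to$, i.e.\ it would assert that every reduced hypersurface in a smooth variety is Du Bois, which is false. The vanishing cycles triangle cannot produce such a triangle: applying ${\rm Gr}^F\,{\rm DR}$ to $\psi_{f,1}\Q_X^H[\dim X]$ and $\phi_{f,1}\Q_X^H[\dim X]$ yields invariants of the singularities of the function $f$ (limit/vanishing Hodge-theoretic objects), not the Du Bois complexes of the fibre $Z$, so the middle and outer terms of your triangle are simply not $\underline{\Omega}_Z^{p-1}$ and $\underline{\Omega}_Z^p$. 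A secondary problem is the regularity claim you flag yourself: the hypothesis ${\rm depth}\,\underline{\Omega}_X^p\geq \dim X-p$ is a bound on depth at maximal ideals, and for $p>0$ it only gives ${\rm depth}(\fra,\underline{\Omega}_X^p)\geq r-p$, not $\geq r$; moreover $\underline{\Omega}_X^p$ is in general a complex, so ``$f_1,\dots,f_r$ is a regular sequence on $\underline{\Omega}_X^p$'' does not parse without passing to a derived notion of depth (where, incidentally, the unconditional inequality ${\rm depth}\,Li^*(-)\geq {\rm depth}(-)-r$ would make the regularity discussion unnecessary, but would not repair the missing triangle).

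For comparison, the paper avoids any adjunction-type statement for Du Bois complexes. It works with the dual objects: starting from $\cH^0\mathbf D_X^H\to \mathbf D_X^H\to\tau^{>0}\mathbf D_X^H\xrightarrow{+1}$, it applies $i^!$ and uses the regular sequence hypothesis only to prove the amplitude bound $\cH^j i^!\cH^0\mathbf D_X^H=0$ for $j>r$, via the Koszul-like complex computing $i^!$ through the $V$-filtration (after a base change to an ambient smooth $Y$). This gives $\tau^{>0}\mathbf D_Z^H\cong \tau^{>r}i^!\tau^{>0}\mathbf D_X^H(r)$, and then applying ${\rm Gr}^F\,{\rm DR}$ and the hypothesis $c(X)\geq k$ yields $c(Z)\geq k$ directly. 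If you want to keep a depth-flavored argument via \cite{CDO2}, you would still need some such Hodge-module-level input relating $i^!$ (or $Li^*$) of the relevant modules on $X$ to those on $Z$; the purely sheaf-theoretic filtration you posit is not available.
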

\begin{proof} Assume $c(X) \geq k$. We will show $c(Z) \geq k$.

We have the exact triangle
\[ \cH^0 \mathbf D_X^H \to \mathbf D_X^H \to \tau^{>0} \mathbf D_X^H \xrightarrow[]{+1}.\]

Let $i\colon Z \to X$ be the closed embedding. Applying $i^!(-)$ to this triangle and Tate twisting, we get the triangle
\begin{equation} \label{eq-exactTriangle} i^! \cH^0 \mathbf D_X^H (r)[r] \to \mathbf D_Z^H \to i^! \tau^{>0} \mathbf D_X^H (r)[r] \xrightarrow[]{+1}.\end{equation}

As $f_1,\dots ,f_r$ form a regular sequence, we see that $\cH^j i^!\cH^0 \mathbf D_X^H(r) = 0$ for all $j > r$. Indeed, the claim is local and only depends on underlying $\cD$-modules, so we can assume $i_{X} \colon X\hookrightarrow Y$ is a closed embedding into smooth $Y$ such that $f_i = F_i\vert_{X}$ for some $F_i \in \cO_Y(Y)$. Then, if $\Gamma \colon Y \to Y\times \A^r_t$ is the graph embedding along $F_1,\dots, F_r$ (and $\gamma \colon X \to X\times \A^r_t$ is the graph embedding along $f_1,\dots, f_r$), we have the Cartesian diagram
\[ \begin{tikzcd} X \ar[r,"\gamma"] \ar[d] & X\times \A^r_t \ar[d] \\ Y \ar[r,"\Gamma"] & Y \times \A^r_t \end{tikzcd}.\]

Then if $\sigma_X\colon X \to X\times \A^r_t ,\sigma_Y\colon Y \to Y \times \A^r_t$ are the inclusions of the zero sections, we have the following isomorphisms by applying Base-Change (\cite{SaitoMHM}*{(4.4.3)}) twice:
\[ i_{X,*} i_* i^! \cH^0 \mathbf D_X^H \cong i_{X,*} \sigma_X^! \gamma_*\cH^0 \mathbf D_X^H \cong \sigma_Y^! \Gamma_* i_{X,*} \cH^0 \mathbf D_X^H.\]

Let $\cM'$ be the underlying right $\cD_Y$-module of $i_{X,*}\cH^0 \mathbf D_X^H$. By \cite{CD}*{Thm. 1.2}, the right-most complex is computed using the $V$-filtration on $\Gamma_* \cM'$ to form the Koszul-like complex similarly to the complex \ref{eq-complexA} (taking $\lambda = 0$, and the union over all Hodge filtration pieces). In particular, it only has possibly non-zero cohomology in degrees $\{0,\dots, r\}$, proving the vanishing.

Thus, we have a quasi-isomorphism
\begin{equation} \label{eq-qis} \tau^{>0} \mathbf D_Z^H \cong \tau^{>r} i^!\tau^{>0} \mathbf D_X^H(r).\end{equation} 

By definition of $i^!$ for mixed Hodge modules (or applying \cite{CD}*{Thm. 1.2} again), we conclude from $c(X) \geq k$ that ${\rm Gr}^F_{p-\dim X} {\rm DR}(\tau^{>r} i^!\tau^{>0} \mathbf D_X^H) = 0$ for all $p\leq k$ in this case. Thus, by the quasi-isomorphism \ref{eq-qis}, we conclude ${\rm Gr}^F_{p-\dim Z}{\rm DR}( \tau^{>0} \mathbf D_Z^H) = 0$ for all $p\leq k$, hence $c(Z) \geq k$.
\end{proof}

We can now prove the corollary.

\begin{proof}[Proof of \corollaryref{cor-DualDuBois}]  Assume ${\rm HRH}(X) \geq 0$. The inequality $0 \leq {\rm HRH}(X) \leq c(X)$ implies, by \lemmaref{lem-adjunction}, that $c(Z) \geq 0$.

This condition is equivalent to the quasi-isomorphism
\[ {\rm Gr}^F_{-\dim Z} {\rm DR}(\cH^0 \mathbf D_Z^H) \cong {\rm Gr}^F_{-\dim Z} {\rm DR}(\mathbf D_Z^H).\]

We have an exact triangle
\[ {\rm IC}_X^H \to \mathbf D_X^H \to \widetilde{K}_X^\bullet\xrightarrow[]{+1},\]
where $\widetilde{K}_X^\bullet$ is the dual of the \emph{RHM defect object} of \cite{ParkPopa} (up to Tate twist). We see that ${\rm HRH}(X) \geq 0$ is equivalent to ${\rm Gr}^F_{-\dim X} {\rm DR}(\widetilde{K}_X^{\bullet}) = 0$.

If we apply $i^!$ to this triangle, shift and Tate twist, we get
\[ i^! {\rm IC}_X^H[r](r) \to \mathbf D_Z^H \to i^! \widetilde{K}_X^\bullet[r](r) \xrightarrow[]{+1}.\]

Thus, the condition ${\rm HRH}(X) \geq 0$ implies 
\[ {\rm Gr}^F_{-\dim X}{\rm DR}(\cH^r i^! {\rm IC}_X^H) \cong {\rm Gr}^F_{-\dim Z} {\rm DR} \cH^0 \mathbf D_Z^H,\]
and so combining these quasi-isomorphisms, we get
\[ {\rm Gr}^F_{-\dim X} {\rm DR}(\cH^r i^!{\rm IC}_X^H) \cong {\rm Gr}^F_{-\dim Z} \mathbf D_Z^H = \mathbb D(\underline{\Omega}_Z^0)[-\dim Z].\]
\theoremref{thm-main} allows us to compute the left hand side in terms of the multiplier modules. Indeed, we do so locally, so take once again embedding $X\hookrightarrow Y$ with the notation as above. We use the same Base-Change argument as above to reduce to computing $\sigma_Y^! \Gamma_* i_{X,*}{\rm IC}_X^H$, where $\sigma_Y \colon Y \to Y \times \A^r_t$ is the inclusion of the zero section.

If $(\cM,F)$ is the right filtered $\cD$-module underlying $i_{X,*}{\rm IC}_X^H$, then we can compute 
\[{\rm Gr}^F_{-\dim X} {\rm DR} \sigma_Y^! \Gamma_* i_{X,*} {\rm IC}_X^H = F_{-\dim X} \sigma_Y^! \Gamma_* i_{X,*}{\rm IC}_X^H\] in terms of the $V$-filtration (along $t_1,\dots, t_r$) of $\cM$ by \cite{CD}*{Thm. 1.2}. We get
\[ F_{-\dim X} \cH^r \sigma_Y^! \Gamma_* i_*\cM = \frac{F_{-\dim X} V^r \Gamma_* \cM}{\fra \cdot F_{-\dim X} V^{r-1} \Gamma_*\cM},\]
which, with \theoremref{thm-main}, proves the first claim. 

For the second claim, if ${\rm lct}(\omega_X,\fra) = r$, then
\[ \cJ(\omega_X,\fra^{r-\varepsilon}) = \cJ(\omega_X,\fra^{r-1-\varepsilon}) = \omega_X^{\rm GR} \text{ for all } 0 < \varepsilon \ll 1,\]
and in this case, ${\rm depth}(\omega_X^{\rm GR},\fra) =r$ by \corollaryref{cor-Skoda}, so we have
\[ \frac{\omega_X^{\rm GR}}{\fra \cdot \omega_X^{\rm GR}} = \cO_Z \otimes_{\cO_X}^L \omega_X^{\rm GR}.\]

Thus, we get
\[ \mathbb D(\underline{\Omega}_Z^0)[-\dim Z] \cong  \cO_Z\otimes_{\cO_X}^L \omega_X^{\rm GR} .\]

We can rewrite $\cO_Z \otimes_{\cO_X}^L (-)$ as $Li^*(-)$ where $i\colon Z \to X$ is the closed embedding. Then $Li^* \circ \mathbb D_X(-) = \mathbb D_Z\circ i^!(-)$, where $i^!(-)$ is the exceptional pullback, in this case (since $i \colon Z \to X$ is defined by a regular sequence):
\[ i^!(-) = Li^*(-) [-r],\]
and so we can rewrite the isomorphism as
\[ \mathbb D_Z(\underline{\Omega}_Z^0)[-\dim Z] = i^!(\omega_X^{\rm GR})[r].\]

If we apply duality to both sides, we get
\[ \underline{\Omega}_Z^0 = L i^* (\mathbb D_X(\omega_X^{\rm GR})[\dim X]).\]

So we see, for any resolution of singularities $\pi \colon Y \to X$, that we have
\[ \underline{\Omega}_Z^0 = L i^* R\pi_*(\cO_Y).\]
\end{proof}

For \corollaryref{cor-Ajit}, we recall the set-up of the deformation to the normal cone. Again, we will make use of an embedding $X \hookrightarrow Y$ and assume $f_1,\dots, f_r \in \cO_X(X)$ are restricted from $Y$. We let $\Gamma \colon Y \to Y \times \A^r_t = T$ denote the graph embedding along $f_1,\dots, f_r$. 

As in the introduction, we define $\widetilde{X}$ and $\widetilde{Y}$ (resp. $\widetilde{T}$) to be the deformation to the normal cone spaces along $(f_1,\dots, f_r)$ (resp. $(t_1,\dots, t_r)$). We have the commutative diagram with Cartesian squares
\[ \begin{tikzcd} \widetilde{X} \ar[r] \ar[d] & \widetilde{Y} \ar[r] \ar[d] & \widetilde{T} \ar[d] \\ X\ar[r] & Y\ar[r] & T \end{tikzcd}.\]

Note that $\widetilde{T} = Y \times \A^r_z \times \A^1_u$ with morphism $\psi$ down to $T$ given at the ring level by $t_i \mapsto z_i u_i$. This morphism is clearly not smooth, however, when we restrict to $\{u\neq 0\} \cong T \times \mathbf G_{m,u}$, the morphism is the projection down to $T$. 

We consider two mixed Hodge modules on $\widetilde{T}$: the first is simply ${\rm IC}_{\widetilde{X}}$. The latter is constructed as follows: we consider the graph embedding $\Gamma$, which we apply to ${\rm IC}_X$ to get a pure Hodge module on $T$. We pull this back to $\{u\neq 0\}$ by the projection, giving
\[ \Gamma_* {\rm IC}_X \boxtimes \Q^H_{\mathbf G_{m,u}}[1],\]
and then we apply $j_*$ to this to get the object $\widetilde{\rm IC}_{X}$ on $\widetilde{T}$.

Note that the restriction to $\{u\neq 0\}$ of $\widetilde{\rm IC}_X$ and that of ${\rm IC}_{\widetilde{X}}$ are isomorphic (both to ${\rm IC}_{X\times \mathbf G_{m,u}}$). Hence, we have a canonical isomorphism
\[ V^\lambda \widetilde{\rm IC}_X \cong V^\lambda {\rm IC}_{\widetilde{X}} \text{ for all } \lambda > 0,\]
and similarly, an isomorphism
 \begin{equation} \label{eq-IsoDefNormalCone}  F_{p} V^\lambda \widetilde{\rm IC}_X \cong F_{p} V^\lambda {\rm IC}_{\widetilde{X}} \text{ for all } \lambda > 0 \text{ and } p\in \Z.\end{equation}

The point is that we can actually compute the $V$-filtration on $\widetilde{\rm IC}_{X}$. Indeed, by definition of the box product and our choice of coordinate $u$ on the $\A^1_u$ term in $\widetilde{T}$, we have that the underlying $\cO$-module is given by
\[ \bigoplus_{k\in \Z} \Gamma_* \cM u^k du,\]
but to determine the $\cD$-module action, we must use the change of coordinates formula. By choosing local coordinates on $Y$, say $y_1,\dots, y_n$, we get on $\{u\neq 0\}$ the coordinates $y_1,\dots, y_n, t_1,\dots, t_r,u$. Similarly, we have the coordinates $y_1,\dots, y_n, z_1,\dots, z_r, u$ where $z_i = \frac{t_i}{u}$ for all $i$. We use a $\widetilde{(-)}$ to denote $y_1,\dots, y_n, u$ when viewed in the first system of coordinates. The change of coordinates formula says
\[ \de_{t_i} = \de_{t_i}(z_i) \de_{z_i} = \frac{1}{u} \de_{z_i},\]
\[ \de_{\widetilde{u}} = \de_u(u) \de_u + \sum_{i=1}^r \de_u(z_i) \de_{z_i} = \de_u -\sum_{i=1}^r \frac{t_i}{u^2} \de_{z_i}\]
\[ = \de_u - \frac{1}{u} \sum_{i=1}^r t_i \de_{t_i}.\]

In particular, we see from the last equality that $\theta_u = \theta_{\widetilde{u}} + \theta_t$. Hence, for a local section $m\otimes \alpha$ of $\Gamma_* \cM$, we see that
\[ (m \otimes \alpha u^k du) (\theta_u) = (m\otimes \alpha u^k du)(\theta_{\widetilde{u}} + \theta_t) = (sm \otimes \alpha u^k du) + (m \otimes \alpha u^k du)(\theta_{\widetilde{u}})\]
\[ = (s - (k+1))m \otimes \alpha u^k du.\]

From this computation, it is not hard to check the following formula:
\[ V^{\lambda} \widetilde{\rm IC}_{X} = \bigoplus_{k \in \Z} V^{\lambda-k-1} \Gamma_* \cM u^k du,\]
and we similarly have, by a standard argument (see \cite{CD}*{Lem. 2.4})
\[ F_{-\dim X-1} V^{\lambda} \widetilde{\rm IC}_{X} = \bigoplus_{k \in \Z} F_{-\dim X} V^{\lambda-k-1} \Gamma_* \cM u^k du.\]

\begin{proof} [Proof of \corollaryref{cor-Ajit}]By the main theorem (used for the leftmost and rightmost equalities) and the isomorphism \ref{eq-IsoDefNormalCone}, we get the formula
\[ \cJ(\widetilde{X},u^{\lambda-\varepsilon}) = F_{-\dim \widetilde{X}} V^\lambda {\rm IC}_{\widetilde{X}} = F_{-\dim X -1}V^\lambda \widetilde{\rm IC}_{X} =  \bigoplus_{k \in \Z} F_{-\dim X} V^{\lambda-k-1} \Gamma_* \cM u^k du\]
\[ = \bigoplus_{k\in \Z} \cJ(\omega_X,\fra^{\lambda-k-1-\varepsilon}) u^k du,\]
which proves the claim.
\end{proof}

Finally, we define and study the Bernstein-Sato polynomials of the pair $(X,\fra)$. To do this, we recall the notation from the end of Section \ref{sec-prelim} above: if $Z\subseteq Y$ is a closed embedding of smooth varieties and $(\cM,F)$ is a filtered right $\cD_Y$-module, we define $G^\bullet_Z \cM = (F_{p(\cM)}\cM)\cdot V^\bullet_Z \cD_Y$. Then we define the $b$-function $b_{\cM,Z}(s)$ to be the minimal polynomial of the action of $\theta \in V^0\cD_Y \cap {\rm Der}_\C(\cO_Y)$ (which acts by the identity on $\cI_Z/\cI_Z^2$) on ${\rm Gr}_{G_Z}^0 \cM$.

To put ourselves into the above situation, we first assume $X\subseteq Y$ is some closed embedding into a smooth variety such that there exist $g_1,\dots, g_r \in \cO_Y(Y)$ which restrict to generators of $\fra \subseteq \cO_X$. Let $(\cM,F)$ denote the right $\cD_Y$-module underlying ${\rm IC}_X$.

Then we consider the graph embedding $\Gamma_g \colon Y \to Y \times \A^r_t$. In this situation, we will see that $b_{(X,\fra)}(s) = b_{Y \times \{0\},\Gamma_{g+}\cM}(s)$ is a good definition of Bernstein-Sato polynomial. However, this requires checking that the definition doesn't depend on choices. Once that is done, we can define a global version, too. Along the way, we verify the claim that $F_{-\dim X} V^\lambda(-)$ is well-defined on ${\rm IC}_X$, as claimed in Remark \ref{rmk-wellDefined} above and evidenced by \theoremref{thm-main}.

The following lemma shows that, with $Y$ fixed, the definition is unambiguous.

\begin{lem} \label{lem-unambiguous} If $(g_1^{(1)},\dots, g_{r_1}^{(1)}) = (g_1^{(2)},\dots, g_{r_2}^{(2)}) + \cI_X$ as ideals in $\cO_Y$, we have canonical identifications
\[ F_{-\dim X} V^\lambda  \Gamma_{g^{(1)},+}\cM \cong F_{-\dim X} V^\lambda \Gamma_{g^{(2)},+} \cM,\]
\[ b_{Y\times \{0\}, \Gamma_{g^{(1)},+}\cM}(s) = b_{Y\times \{0\}, \Gamma_{g^{(2)},+}\cM}(s).\]
 \end{lem}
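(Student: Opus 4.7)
The plan is to pass through the combined tuple $(g^{(1)}, g^{(2)})$: by the symmetry of the statement, it suffices to show that appending to a tuple $g = (g_1,\dots,g_r)$ any single element $h \in (g) + \cI_X \subseteq \cO_Y$ yields canonically identified invariants. Iterating lets me add one generator at a time, and writing $h = \sum_i a_i g_i + c$ with $a_i \in \cO_Y$ and $c \in \cI_X$ splits the task into an ``$\cO_Y$-linear'' piece in the $g_i$ and an ``$\cI_X$-error'' piece, which I would handle separately.

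For the $\cO_Y$-linear piece, I would use the $\cD$-module automorphism $\phi$ of $Y \times \A^{r+1}_{t,s}$ given by $\phi(y,t,s) = (y,t,s - \sum_i a_i(y) t_i)$. A direct computation shows $\phi \circ \Gamma_{(g,h)} = \Gamma_{(g,c)}$; that $\phi^{\ast}$ preserves the ideal $(t_1,\dots,t_r,s)$ defining $Y \times \{0\}$ (since $\phi^{\ast}(s) = s - \sum a_i t_i \in (t,s)$); and that $\phi^{\ast}$ fixes the Euler derivation $\theta = \sum t_i \de_{t_i} + s\de_s$ (by the chain rule, after expressing $\de_{t_i}, \de_s$ in the new coordinates). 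Hence $\phi_{+}$ identifies the Hodge filtration, the $V$-filtration along $Y\times \{0\}$, and the $G$-filtration of both pushforwards, yielding the canonical identification for this case and reducing the problem to $h = c \in \cI_X$.

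For the $\cI_X$-error piece, I would factor $\Gamma_{(g,c)} = j \circ \Gamma_g$ and $\Gamma_{(g,0)} = \iota \circ \Gamma_g$, where $j,\iota \colon Y\times \A^r \to Y \times \A^{r+1}$ are the graph of $c$ and the zero section respectively. Applying \lemmaref{lem-sameSubvariety} to $\iota$ identifies the invariants of $\iota_{+}\Gamma_{g,+}\cM$ with those of $\Gamma_{g,+}\cM$ on $Y \times \A^r$, so it remains to match $j_{+}\Gamma_{g,+}\cM$ with $\iota_{+}\Gamma_{g,+}\cM$ along $Y\times \{0\}$ in $Y \times \A^{r+1}$. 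The two are $\cD$-module isomorphic via the automorphism $\psi(y,t,s) = (y,t,s - c(y))$ satisfying $\psi \circ j = \iota$. The main obstacle will be that $\psi$ fails to preserve the ideal $(t,s)$ (one has $\psi^{\ast}(s) = s - c$), only modulo $\cI_X$; under the change of coordinate $S := s - c$ the Euler field transforms as $\theta_{\mathrm{old}} = \theta_{\mathrm{new}} + c\,\de_S$. The plan for this step is to exploit that both modules are supported in $\Gamma_g(X)\times \{0\}\subseteq X \times \A^r \times \{0\}$ since $c\vert_X = 0$, so that the two smooth hypersurfaces $\{s=0\}$ and $\{s=c\}$ agree formally along the support after intersecting with $\{t=0\}$, and then to use the uniqueness characterization of the $V$-filtration to verify that $\psi_{+}V^\lambda \iota_{+}\Gamma_{g,+}\cM$ satisfies the defining axioms of $V^\lambda j_{+}\Gamma_{g,+}\cM$, with the crucial observation that the correction $c\,\de_S$ shifts the Hodge index strictly and hence has no effect on the lowest Hodge piece. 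The equality of $b$-functions then follows from the same analysis applied to the $G$-filtration.
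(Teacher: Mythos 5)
Your reduction to appending a single element $h=\sum_i a_ig_i+c$ (with $c\in\cI_X$) at a time is sound, and your treatment of the $\cO_Y$-linear piece is correct: the shearing map $\phi(y,t,s)=(y,t,s-\sum_i a_i(y)t_i)$ is an automorphism of the trivial bundle $Y\times\A^{r+1}_{t,s}$ over $Y$ that is linear in the fiber variables, so it preserves the zero section, the ideal $(t,s)$ and the Euler field, and satisfies $\phi\circ\Gamma_{(g,h)}=\Gamma_{(g,c)}$; hence $\phi_{+}$ transports the Hodge, $V$- and $G$-filtrations and the $b$-function intact. This step plays the role that \lemmaref{lem-sameSubvariety} plays in the paper's proof.

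The gap is in the $\cI_X$-error step. The automorphism $\psi(y,t,s)=(y,t,s-c(y))$ does \emph{not} preserve the center $Y\times\{0\}$, and what $\psi_{+}$ transports the $V$-filtration of $\iota_{+}\Gamma_{g,+}\cM$ along $\{t=s=0\}$ to is the $V$-filtration of $j_{+}\Gamma_{g,+}\cM$ along the shifted center $\{t=0,\,s=c\}$. These two filtrations on $j_{+}\Gamma_{g,+}\cM$ are genuinely different in general, so the transported filtration cannot satisfy the defining axioms along $\{t=s=0\}$ and the uniqueness of the $V$-filtration cannot be invoked. A minimal example: $Y=\A^1_y$, $X=\{0\}$, $c=y$, $\cN=\C[\de_y]\delta$ the delta module at the origin; then $j_{+}\cN=\iota_{+}\cN=\C[\de_y,\de_s]\delta$ is the delta module at the origin of $\A^2_{y,s}$, and the element $(\de_y+\de_s)\delta$ lies in $V^0$ along $\{s=y\}$ but not in $V^0$ along $\{s=0\}$; the two filtrations agree only in the lowest Hodge piece. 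Your observation that the correction $c\,\de_s$ raises the Hodge index correctly locates where the discrepancy lives, but it is not converted into an argument: there is no axiomatic characterization of $F_{-\dim X}V^\bullet$ (or of ${\rm Gr}_G^0$ with its $\theta$-action) alone that one can verify, and the equality of these lowest pieces is precisely what remains to be proved. This is exactly where the paper's proof supplies an ingredient you have no substitute for: it takes the graph embedding $\Gamma_a$ along a full set of generators of $\cI_X$, uses Kashiwara's equivalence to write the module as a pushforward from the zero section in the new directions, and then applies \lemmaref{lem-sameFunctions} and \lemmaref{lem-sameSubvariety}, so that the appended function becomes a linear combination of coordinates vanishing on the center and the required shifts hold by the axioms of the $V$-filtration. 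Without an argument of this kind (note that multiplication by a function vanishing merely on the support does not shift the $V$-filtration), your second step does not go through.
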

\begin{proof} Without loss of generality, it suffices to compare the result for $g_1^{(2)},\dots, g_{r_2}^{(2)}$ with $g_1^{(2)},\dots, g_{r_2}^{(2)}, g_1^{(1)},\dots, g_{r_1}^{(1)}$.

First of all, write $\cI_X = (a_1,\dots, a_s) \subseteq \cO_Y$. Consider the graph embedding
\[ \Gamma_a \colon Y \to Y \times \A^s_u, y \mapsto (y, a_1(y),\dots, a_s(y)),\]
with coordinates $u_1,\dots, u_s$ on the $\A^s_u$ factor.

As $(g_1^{(1)},\dots, g_{r_1}^{(1)}) = (g_1^{(2)},\dots, g_{r_2}^{(2)}) + \cI_X$, we can write
\[ g_i^{(1)} = \sum c_{ij} g_j^{(2)} + \sum_{\ell=1}^s \chi_\ell a_\ell\]
for some $c_{ij}, \chi_\ell \in \cO_Y$. We consider the sequence of graph embeddings
\[ Y \times \A^s_u \to Y \times \A^s_u \times \A^{r_2}_t \to Y \times \A^s_u \times \A^{r_2}_t \times \A^{r_1}_\tau\]
\[ (y,u) \mapsto (y,u,g^{(2)}), \quad (y,u,t) \mapsto (y,u,t,\sum c t + \sum \chi u).\]

In fact, we have the commutative diagram
\[ \begin{tikzcd} Y \ar[r,"\Gamma_{g^{(2)}}"] \ar[d,"\Gamma_{a}"] & Y\times \A^{r_2}_t \ar[r,"\eta"] \ar[d,"\Gamma_{a}"] & Y\times \A^{r_2}_t \times \A^{r_1}_\tau \ar[d,"\Gamma_{a}"] \\ Y \times \A^s_u \ar[r,"\Gamma_{g^{(2)}}"]  & Y\times \A^s_u \times \A^{r_2}_t \ar[r,"\Gamma"] & Y\times \A^s_u \times \A^{r_2}_t \times \A^{r_1}_\tau \end{tikzcd},\]
where every morphism is a graph embedding. Moreover, $\eta \circ \Gamma_{g^{(2)}}$ is the graph embedding along $g^{(2)},g^{(1)}$, which we denote by $\Gamma_{(g^{(2)}, g^{(1)})}$.

For ease of notation, write $\cN_1 = \Gamma_{g^{(2)},+} \cM$ and $\cN_2 = \Gamma_{(g^{(2)},g^{(1)}),+} \cM = \eta_+ \cN_1$. 

For $i=1,2$ we have, by \lemmaref{lem-sameFunctions}, an isomorphism
\[ F_{-\dim X}V^\lambda_{Y \times \{0\}}  \cN_i = F_{-\dim X}V^\lambda_{Y\times \A^s_u \times \{0\}} \Gamma_{a,+}\cN_i\]
and equality
\[ b_{Y\times \{0\},\cN_i}(s) = b_{Y\times \A^s_u \times \{0\},\Gamma_{a,+}\cN_i}(s).\]

Now, as ${\rm IC}_X$ is supported on $X \subseteq Y$, and $a_1,\dots, a_r$ vanish on $X$, we see that $\Gamma_{a,+}\cN_1$ is supported on $Y \times \{0\} \times \A^{r_2}_t$ and similarly $\Gamma_{a,+} \cN_2$ is supported on $Y\times \{0\} \times \A^{r_2}_t \times \A^{r_1}_\tau$. Thus, there exist $\cN_1', \cN_2'$ such that
\[ \sigma_{i,+}(\cN_i',F) \cong \Gamma_{a,+}(\cN_i,F) \text{ for } i = 1,2,\]
where $\sigma_1 \colon Y\times \{0\} \times \A^{r_2}_t \to Y\times \A^s_u \times \A^{r_2}_t$ is the zero section and similarly $\sigma_2 \colon Y\times \{0\} \times \A^{r_2}_t \times \A^{r_1}_\tau \to Y\times \A^s_u \times \A^{r_2}_t \times \A^{r_1}_\tau$ is the zero section.

As this is an isomorphism, we clearly have identifications
\[F_{-\dim X}V^\lambda_{Y\times \A^s_u \times \{0\}} \Gamma_{a,+}\cN_i \cong F_{-\dim X}V^\lambda_{Y\times \A^s_u \times \{0\}}\sigma_{i,+}\cN_i'\]
and
\[ b_{Y\times \A^s_u \times \{0\},\Gamma_{a,+}\cN_i}(s) =  b_{Y\times \A^s_u \times \{0\},\sigma_{i,+}\cN_i}(s).\]

By \lemmaref{lem-sameFunctions}, we have identifications
\[F_{-\dim X}V^\lambda_{Y\times \A^s_u \times \{0\}} \sigma_{i,+}\cN_i' \cong V^\lambda_{Y \times \{0\}} \cN_i'\]
and
\[ b_{Y\times \A^s_u \times \{0\},\sigma_{i,+}\cN_i}(s) \cong b_{Y \times \{0\},\cN_i}(s).\]

But then by \lemmaref{lem-sameSubvariety}, we can identify the right hand side as follows:
\[F_{-\dim X}V^\lambda_{Y \times \{0\}} \cN_i' \cong F_{-\dim X}V^\lambda_{Y \times \{0\}} \sigma_{i,+}\cN_i'\]
and
\[ b_{Y \times \{0\},\cN_i}(s)= b_{Y \times \{0\},\sigma_{i,+}\cN_i}(s) .\]

We can replace $\sigma_{i,+}\cN_i$ in those identifications by $\Gamma_{a,+} \cN_i$. To summarize what we have shown so far, it is isomorphisms
\[ F_{-\dim X} V^\lambda_{Y\times \{0\}} \cN_i \cong F_{-\dim X} V^{\lambda}_{Y\times \{0\}}\Gamma_{a,+} \cN_i\]
and equalities
\[ b_{Y\times \{0\},\cN_i}(s) = b_{Y\times \{0\},\Gamma_{a,+} \cN_i}(s).\]

At this point we are done: indeed, note that $\Gamma_{a,+} \cN_2 = \Gamma_+ \Gamma_{a,+} \cN_1$, and hence we have by \lemmaref{lem-sameSubvariety}
\[F_{-\dim X}V^\lambda_{Y \times \{0\}} \Gamma_{a,+} \cN_1 =F_{-\dim X}V^\lambda_{Y \times \{0\}} \Gamma_{a,+} \cN_2 \]
and
\[ b_{Y \times \{0\},\Gamma_{a+} \cN_1}(s)= b_{Y \times \{0\},\Gamma_{a+} \cN_2}(s) .\]
\end{proof}

We prove an analogue of \cite{BMS}*{Thm. 5} in our setting. The proof is identical. 

Now, let $X_1,X_2$ be two possibly singular varieties with $X_i \hookrightarrow Y_i$. Let $F_1^{(i)},\dots, F_{r_i}^{(i)} \in \cO_{Y_i}$ restrict to $f_1^{(i)},\dots, f_{r_i}^{(i)}$ on $X_i$. Consider $F_1^{(1)},\dots, F_{r_1}^{(1)}, F_1^{(2)},\dots, F_{r_2}^{(2)} \in \cO_{Y_1\times Y_2}$ which restrict to $f_1^{(1)},\dots, f_{r_1}^{(1)}, f_1^{(2)},\dots, f_{r_2}^{(2)}$ on $X_1\times X_2$. Let $\cM_{Y_i}$ be the filtered right $\cD_{Y_i}$-module underlying the pushforward of ${\rm IC}_{X_i}^H$ on $Y_i$ and let $\cM$ be the filtered right $\cD_{Y_1\times Y_2}$-module underlying the pushforward of ${\rm IC}_{X_1\times X_2}^H$.

\begin{lem} \label{lem-Product} In the above notation, write $b_{Y_i\times \{0\}, \Gamma_{f^{(i)},+}\cM_{Y_i}}(s) = \prod_{\gamma \in \Q} (s+\gamma)^{m^{(i)}_{\gamma}}$. Then
\[ b_{Y_1 \times Y_2 \times \{0\}, \Gamma_{(f^{(1)},f^{(2)}),+}\cM}(s) = \prod (s+\gamma)^{m_\gamma},\]
where $m_\gamma = \max\{m_\alpha^{(1)} + m_{\beta}^{(2)} -1 \mid m_\alpha^{(1)} >0,\, m_{\beta}^{(2)} > 0, \,  \alpha+\beta = \gamma\}$.
\end{lem}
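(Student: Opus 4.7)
The plan is to reduce the statement to a linear algebra calculation of the minimal polynomial of a sum of commuting operators on a tensor product, by exhibiting all the ingredients in the definition of $b_{(X,\fra)}(s)$ as compatible with external products.

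First, I would identify $\cM$ with the external product $\cM_{Y_1}\boxtimes \cM_{Y_2}$ as a filtered right $\cD_{Y_1\times Y_2}$-module. The external product ${\rm IC}^H_{X_1}\boxtimes {\rm IC}^H_{X_2}$ is pure of weight $\dim X_1 + \dim X_2$ with strict support $X_1\times X_2$ and has the correct polarized variation of Hodge structure on the smooth locus, hence must coincide with ${\rm IC}^H_{X_1\times X_2}$. In particular, $F_{-\dim X_1-\dim X_2}\cM = F_{-\dim X_1}\cM_{Y_1}\boxtimes F_{-\dim X_2}\cM_{Y_2}$. Up to a reordering of the factors of $\A^{r_1+r_2}_t$, the graph embedding $\Gamma_{(f^{(1)},f^{(2)})}$ is the external product of $\Gamma_{f^{(1)}}$ and $\Gamma_{f^{(2)}}$, yielding
\[
\Gamma_{(f^{(1)},f^{(2)}),+}\cM \;\cong\; \Gamma_{f^{(1)},+}\cM_{Y_1}\;\boxtimes\;\Gamma_{f^{(2)},+}\cM_{Y_2}
\]
as filtered $\cD$-modules, with the Euler operator along $Z=Y_1\times Y_2\times\{0\}$ decomposing as $\theta=\theta_1+\theta_2$, where $\theta_i=\sum_j t^{(i)}_j\partial_{t^{(i)}_j}$ acts only on the $i$-th factor.

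Second, I would verify that the $V$-filtration along $Z$ and the filtration $G^\bullet_Z$ are compatible with this external product decomposition. For the $V$-filtration, this follows from uniqueness: the candidate filtration
\[
V^\lambda(\cN_1\boxtimes \cN_2) \;=\; \sum_{\alpha+\beta\geq\lambda} V^\alpha\cN_1\boxtimes V^\beta\cN_2
\]
satisfies the defining axioms (discreteness, $V^0\cD$-coherence, and nilpotency of $\theta-\lambda+r_1+r_2$ on ${\rm Gr}_V^\lambda$, the last one following because $\theta_1,\theta_2$ commute). The filtration $G^\bullet_Z$ then factors as a convolution of the corresponding filtrations on the two factors, and passing to the associated graded yields ${\rm Gr}^0_{G_Z}(\cM_1\boxtimes \cM_2)\cong {\rm Gr}^0_{G_{Z_1}}\cM_1\otimes_\C {\rm Gr}^0_{G_{Z_2}}\cM_2$ on which $\theta$ acts as $\theta_1\otimes 1+1\otimes \theta_2$.

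The problem then reduces to the following linear algebra statement: if $T_i$ is an endomorphism of a $\C$-vector space $V_i$ with minimal polynomial $\prod_\alpha(s+\alpha)^{m^{(i)}_\alpha}$, then $T_1\otimes 1+1\otimes T_2$ on $V_1\otimes V_2$ has minimal polynomial $\prod_\gamma(s+\gamma)^{m_\gamma}$ with $m_\gamma$ as in the lemma statement. Decomposing each $V_i$ into generalized eigenspaces $V_i^{-\alpha}$, the $-\gamma$ generalized eigenspace of the sum is $\bigoplus_{\alpha+\beta=\gamma}V_1^{-\alpha}\otimes V_2^{-\beta}$; on each summand with both factors non-zero, the operator $(T_1+\alpha)\otimes 1+1\otimes(T_2+\beta)$ is a sum of two commuting nilpotents with indices $m^{(1)}_\alpha$ and $m^{(2)}_\beta$, and a standard computation (choosing cyclic vectors and expanding powers via the binomial theorem) shows the combined index equals $m^{(1)}_\alpha+m^{(2)}_\beta-1$. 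Taking the maximum over $\alpha+\beta=\gamma$ produces the stated $m_\gamma$.

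The main obstacle will be the second step, namely rigorously justifying the external-product decomposition of the $V$-filtration and of $G^\bullet_Z$ in this higher codimension setting. The key subtlety is that the ring $V^0_Z\cD_{Y_1\times Y_2\times \A^{r_1+r_2}_t}$ contains cross-terms like $t^{(1)}_j\partial_{t^{(2)}_k}$ that mix the two factors; one must verify that these do not enlarge the filtrations beyond the external product candidate, which ultimately follows from the algebra-filtration property of $V^\bullet\cD$ together with the commutation of $\theta_1$ and $\theta_2$.
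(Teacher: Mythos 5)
Your overall route is the same as the paper's: identify $\Gamma_{(f^{(1)},f^{(2)}),+}\cM$ with $\Gamma_{f^{(1)},+}\cM_{Y_1}\boxtimes\Gamma_{f^{(2)},+}\cM_{Y_2}$ (coming from ${\rm IC}_{X_1\times X_2}={\rm IC}_{X_1}\boxtimes{\rm IC}_{X_2}$), transport the filtrations, and finish with the linear-algebra fact about the minimal polynomial of $T_1\otimes 1+1\otimes T_2$. But there is a genuine gap in your second step: the identification ${\rm Gr}^0_{G_Z}(\cN_1\boxtimes\cN_2)\cong{\rm Gr}^0_{G}\cN_1\otimes_\C{\rm Gr}^0_{G}\cN_2$ is false in general. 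Since $F_{p}(\cN_1\boxtimes\cN_2)=F_{p_1}\cN_1\boxtimes F_{p_2}\cN_2$ and $V^k_Z\cD=\sum_{i+j=k}V^i\cD\boxtimes V^j\cD$ (your worrisome cross-terms $t^{(1)}_j\de_{t^{(2)}_k}$ simply lie in $V^1\boxtimes V^{-1}$), the $G$-filtration of the box product is the convolution $G^k=\sum_{i+j=k}G^i\boxtimes G^j$, and therefore its degree-zero graded piece is $\bigoplus_{i+j=0}{\rm Gr}_G^i\cN_1\boxtimes{\rm Gr}_G^j\cN_2$, which contains all the cross summands with $i=-j\neq 0$; these are typically nonzero (already ${\rm Gr}_G^{-1}$ usually is, since $G^{-1}=G^0\cdot V^{-1}\cD\supsetneq G^0$). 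As written, your reduction computes the minimal polynomial of $\theta=\theta_1+\theta_2$ only on the $(0,0)$ summand, which gives the divisibility $b(s)\mid b_{Y_1\times Y_2\times\{0\},\Gamma_{(f^{(1)},f^{(2)}),+}\cM}(s)$ but not the reverse: a priori the summands with $i\neq 0$ could contribute extra factors. The paper closes exactly this point using the shift property $({\rm Gr}_{G}^{j}\cN)\,b_{Z,\cN}(\theta+j)=0$ for all $j\in\Z$: on ${\rm Gr}_G^{\ell}\boxtimes{\rm Gr}_G^{-\ell}$ the minimal polynomials of $\theta_1$ and $\theta_2$ divide $b_1(s+\ell)$ and $b_2(s-\ell)$, so the summand is killed by $b(\theta)$ as well, while the $(0,0)$ summand realizes $b(s)$ exactly (your cyclic-vector/binomial computation, which is fine even though the graded pieces are infinite dimensional over $\C$, since the generalized eigenspace decomposition exists once a minimal polynomial does). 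With that correction your argument goes through and coincides with the paper's.

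A secondary remark: the step you flag as the main obstacle, namely proving external-product compatibility of the $V$-filtration itself in higher codimension, is not needed. The $b$-function in this lemma is defined purely through $G^\bullet_Z=(F_{p}\,\cdot)\cdot V^\bullet_Z\cD$ and the Euler operator, so the only inputs are the two convolution formulas above (for the lowest Hodge piece and for $V^\bullet_Z\cD$); the paper never invokes a $V$-filtration of the box product, and dropping that step both simplifies and strengthens your write-up.
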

\begin{proof}
We consider $M_i = \Gamma_{F^{(i)},+} {\rm IC}_{X_i}$ on $Y_i \times \A^{r_i}_{t^{(i)}}$ and $\Gamma_{(F^{(1)},F^{(2)}),+} {\rm IC}_{X_1 \times X_2}$ on $Y_1 \times Y_2 \times \A^{r_1}_{t^{(1)}} \times \A^{r_2}_{t^{(2)}}$. We have the identification
\begin{equation} \label{eq-BoxTimes} \Gamma_{(F^{(1)},F^{(2)}),+} {\rm IC}_{X_1 \times X_2} = \Gamma_{F^{(1)},+} {\rm IC}_{X_1} \boxtimes \Gamma_{F^{(2)},+} {\rm IC}_{X_2} \end{equation}
using the isomorphism
\[ {\rm IC}_{X_1 \times X_2} = {\rm IC}_{X_1} \boxtimes {\rm IC}_{X_2}.\]

The isomorphism \ref{eq-BoxTimes} gives the isomorphism
\begin{equation} \label{eq-BoxTimesHodge} F_{-\dim X_1 -\dim X_2} \Gamma_{(F^{(1)},F^{(2)}),+} {\rm IC}_{X_1 \times X_2} = F_{-\dim X_1} \Gamma_{F^{(1)},+} {\rm IC}_{X_1} \boxtimes F_{-\dim X_2} \Gamma_{F^{(2)},+} {\rm IC}_{X_2} \end{equation}

Then by multiplication by $V^\bullet \cD_{Y_1 \times Y_2 \times \A^{r_1}_{t^{(1)}} \times \A^{r_2}_{t^{(2)}}}$ on both sides, we get the equality
\[ G^\bullet \Gamma_{(F^{(1)},F^{(2)}),+} {\rm IC}_{X_1 \times X_2}  = \sum_{i+j = \bullet} G^i \Gamma_{F^{(1)},+} {\rm IC}_{X_1}\boxtimes G^j\Gamma_{F^{(2)},+} {\rm IC}_{X_2}\]
and hence
\[{\rm Gr}_G^0\Gamma_{(F^{(1)},F^{(2)}),+} {\rm IC}_{X_1 \times X_2} = \bigoplus_{i+j = 0} {\rm Gr}_G^i \Gamma_{F^{(1)},+} {\rm IC}_{X_1}\boxtimes {\rm Gr}_G^j\Gamma_{F^{(2)},+} {\rm IC}_{X_2}.\]

Let $b(s)$ be the polynomial as defined in the lemma statement. Recall that ${\rm Gr}_G^{\ell_i} \Gamma_{F^{(i)},+} {\rm IC}_{X_i}$ is annihilated by $b_{f^{(i)}}(\theta_{t^{(i)}}+\ell_i)$. So for any $\ell_1,\ell_2$ we see that $b(\theta_{t^{(1)}} + \theta_{t^{(2)}} + \ell_1+\ell_2)$ annihilates the box product ${\rm Gr}_G^{\ell_1} \Gamma_{F^{(1)},+} {\rm IC}_{X_1} \boxtimes {\rm Gr}_G^{\ell_2}\Gamma_{F^{(2)},+} {\rm IC}_{X_2}$. We conclude that $b(\theta_{t^{(1)}} + \theta_{t^{(2)}})$ annihilates ${\rm Gr}_G^0 \Gamma_{(F^{(1)},F^{(2)}),+} {\rm IC}_{X_1 \times X_2}$, and so we get (by minimality) $b_{(f^{(1)},f^{(2)})}(s) \mid b(s)$.

By the above observation, however, $b(s)$ is precisely the minimal polynomial of $\theta_{t^{(1)}} +\theta_{t^{(2)}}$ on ${\rm Gr}_G^0\Gamma_{F^{(1)},+} {\rm IC}_{X_1} \boxtimes {\rm Gr}_G^0\Gamma_{F^{(2)},+} {\rm IC}_{X_2}$, which proves the other divisibility relation and hence the claimed equality.
\end{proof}

With these lemmas, we can prove independence of the smooth ambient variety $Y$.

\begin{lem} \label{lem-ChangeY} Let $Y_1,Y_2$ be two smooth varieties containing $X$ and let $f_1^{(i)},\dots, f_r^{(i)}$ be functions on $Y_i$ such that $f_j^{(i)}\vert_X = f_j$ for $1\leq j \leq r$. We write $F_{-\dim X} V^\bullet_{Y_i} {\rm IC}_X^H$ for the $V$-filtration on $F_{-\dim X} {\rm IC}_X^H$ induced by the graph embedding of $f_1^{(i)},\dots, f_{r_i}^{(i)}$ on $Y_i$. Then we have an isomorphism
\[ F_{-\dim X} V^\bullet_{Y_1} {\rm IC}_X^H \cong F_{-\dim X} V^\bullet_{Y_2} {\rm IC}_X^H\]
and an equality
\[ b_{Y_1\times \{0\}, \Gamma_{f^{(1)},+}\cM_{Y_1}}(s) = b_{Y_2\times \{0\}, \Gamma_{f^{(2)},+}\cM_{Y_2}}(s).\]
\end{lem}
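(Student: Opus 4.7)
The plan is to reduce to a local comparison on a small Zariski (or étale) neighborhood of an arbitrary point $x \in X$, freely shrinking both $Y_1$ and $Y_2$, since both the Hodge pieces $F_{-\dim X} V^\bullet$ and the $b$-function are local invariants. The strategy is then to introduce the product $Y_{12} := Y_1 \times Y_2$ together with the diagonal closed embedding $\iota_{12} := (\iota_1, \iota_2) \colon X \to Y_{12}$, and to compare each $Y_j$-side of the claimed equality to a common $Y_{12}$-side.

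To set up this comparison, I would first extend $\iota_2 \colon X \to Y_2$ to a morphism $\tilde{\iota}_2 \colon Y_1 \to Y_2$ by lifting local coordinates on $Y_2$ near $\iota_2(x)$ from $\cO_X$ back to $\cO_{Y_1}$, using surjectivity of $\cO_{Y_1} \twoheadrightarrow \cO_X$ and smoothness of $Y_2$. The graph $\gamma_1 \colon Y_1 \to Y_{12}$, $y_1 \mapsto (y_1, \tilde{\iota}_2(y_1))$, is then a closed embedding satisfying $\gamma_1 \circ \iota_1 = \iota_{12}$, and the swapped construction yields $\gamma_2 \colon Y_2 \to Y_{12}$ with $\gamma_2 \circ \iota_2 = \iota_{12}$. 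Letting $\cM_{Y_{12}}$ be the filtered right $\cD_{Y_{12}}$-module underlying $\iota_{12,*} {\rm IC}_X^H$, we then have $\gamma_{j,+} \cM_{Y_j} = \cM_{Y_{12}}$ for $j = 1, 2$. Next I would pick the lifts $\tilde{f}^{(j)}_\ell := f^{(j)}_\ell \circ \pi_j$, where $\pi_j$ is the projection to $Y_j$; these visibly restrict to $f_\ell$ on $X$, and a direct check shows that the graph embedding $\Gamma_{\tilde{f}^{(j)}}$ on $Y_{12}$ fits into a commutative square with $\Gamma_{f^{(j)}}$ on $Y_j$ and $(\gamma_j, \id_{\A^r_t})$, giving
\[ \Gamma_{\tilde{f}^{(j)},+} \cM_{Y_{12}} \,=\, (\gamma_j, \id)_+ \Gamma_{f^{(j)},+} \cM_{Y_j}. \]

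At this point the hypotheses of \lemmaref{lem-sameFunctions} apply to the Cartesian square of smooth inclusions cutting out the zero section $Y_{12} \times \{0\} \subseteq Y_{12} \times \A^r_t$ along the embedded $Y_j \times \A^r_t$, and that lemma yields
\[ F_{-\dim X} V^\bullet_{Y_{12} \times \{0\}} \Gamma_{\tilde{f}^{(j)},+} \cM_{Y_{12}} \,\cong\, F_{-\dim X} V^\bullet_{Y_j \times \{0\}} \Gamma_{f^{(j)},+} \cM_{Y_j} \]
together with the matching equality of $b$-functions, for each $j$. Since both tuples $\tilde{f}^{(1)}$ and $\tilde{f}^{(2)}$ satisfy $(\tilde{f}^{(j)}) + \cI_X = \fra + \cI_X$ inside $\cO_{Y_{12}}$, \lemmaref{lem-unambiguous} identifies the two $Y_{12}$-sides, and chaining the three identifications yields both claims. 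The step I expect to be the main obstacle is the initial construction of $\tilde{\iota}_j$ making $\gamma_j$ factor the diagonal: it forces the local shrinking, but once the extensions exist, the transversality required by \lemmaref{lem-sameFunctions} is automatic, since the intersection of the image of $(\gamma_j, \id)$ with $Y_{12} \times \{0\}$ is the graph of $\tilde{\iota}_j$ times $\{0\}$, which is complementary inside $Y_{12} \times \A^r_t$ to $\{\iota_2(x)\} \times \A^r_t$ at any point of $X$.
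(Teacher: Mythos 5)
Your overall skeleton --- pass to $Y_{12}=Y_1\times Y_2$ with the embedding $\iota_{12}=(\iota_1,\iota_2)$, use the pulled-back tuples $F^{(j)}=\pi_j^*f^{(j)}$, identify the two $Y_{12}$-computations via \lemmaref{lem-unambiguous}, and bridge each $Y_j$-computation to the $Y_{12}$-one --- is exactly the architecture of the paper's proof, and your transversality check for \lemmaref{lem-sameFunctions} would indeed be fine \emph{if} the closed embeddings $\gamma_j\colon Y_j\to Y_{12}$ with $\gamma_j\circ\iota_j=\iota_{12}$ existed. That is the genuine gap: producing $\gamma_1$ amounts to extending $\iota_2\colon X\to Y_2$ to a morphism $\tilde\iota_2\colon Y_1\to Y_2$ on a Zariski neighborhood of a point of $X$, and this fails in general. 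Take $X=Y_2=E$ an affine open subset of an elliptic curve, $\iota_2=\mathrm{id}$, and $Y_1=\A^N\supseteq E$ a closed embedding: a morphism $U\to E$ from a Zariski open $U\subseteq\A^N$ restricting to the identity on $U\cap E$ would be non-constant, hence dominant onto $E$, contradicting L\"uroth. Surjectivity of $\cO_{Y_1}\twoheadrightarrow\cO_X$ lets you lift the coordinate \emph{functions}, but smoothness of $Y_2$ only lets you lift the resulting map $U\to\A^{\dim Y_2}$ back into $Y_2$ after an \'etale (or analytic, or formal) localization, not a Zariski one. Your parenthetical ``or \'etale'' fallback is not free either: you would then need that $F_{-\dim X}V^\bullet$, the $G$-filtration and the resulting $b$-function can be computed after \'etale (smooth, non-characteristic) base change, and that the locally obtained identifications glue; this is true but is nowhere established in the paper and is not supplied in your sketch.

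The paper bridges $Y_1$ to $Y_{12}$ in the opposite direction precisely to avoid needing such a section: it uses the projection $\Pi\colon Y_1\times Y_2\times\A^{r}_t\to Y_1\times\A^{r}_t$, which is projective (in fact an isomorphism) on the support of $\Gamma_{F^{(1)},+}\,\iota_{12,*}{\rm IC}_X^H$, and invokes the bistrictness statement \propositionref{prop-bistrictness} to identify $F_{-\dim X}V^\bullet$ before and after pushing forward; for the $b$-function it appeals to \lemmaref{lem-Product} with the second tuple of functions taken to be $0$, the second factor's $b$-function being $s$ by \exampleref{eg-ZeroEmbedding}. So to repair your route you must either carry out the \'etale/analytic base-change compatibilities in detail, or replace the sections $\gamma_j$ by this projection-plus-bistrictness mechanism --- at which point you have essentially reproduced the paper's argument.
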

\begin{proof} 

Let $X \to Y_1 \times Y_2$ be the diagonal embedding. Let $F_j^{(i)} = \pi_i^*(f_j^{(i)}) \in \cO_{Y_1\times Y_2}(Y_1\times Y_2)$. For any ideal $\frb \subseteq \cO_{Y_1\times Y_2}$, we write $F_{-\dim X} V^\bullet_{Y_1\times Y_2, \frb} {\rm IC}_X^H$ for the $V$-filtration on $F_{-\dim X} {\rm IC}_X^H$ induced by the graph embedding along some set of generators of $\frb$ on $Y_1 \times Y_2$ (by \lemmaref{lem-unambiguous}, this is independent of the choice of generators).

By \lemmaref{lem-unambiguous}, we see that the invariants for $(F_1^{(1)},\dots, F_{r}^{(1)})$ and $(F_1^{(2)},\dots, F_{r}^{(2)})$ are the same. In fact, as the ideal they span is unchanged if we append $0$'s, we see that the invariants for $(F_1^{(1)},\dots, F_{r}^{(1)},0,\dots, 0)$ and $(0,\dots, 0, F_1^{(2)},\dots, F_{r}^{(2)})$ are the same.

For the claim about $b$-functions, it suffices to prove that the $b$-function for $f_1^{(1)},\dots, f_{r}^{(1)}$ is the same as that for $F_1^{(1)},\dots, F_{r}^{(1)},0,\dots, 0$. This follows immediately from \lemmaref{lem-Product}, where the second set of functions are taken to be $0$ (where we apply \ref{eg-ZeroEmbedding} to compute that the $b$-function is simply $s$ in this case).

For the claim about $V$-filtrations, it suffices to prove
\begin{equation} \label{eq-PullbackProduct} F_{-\dim X} V^\bullet_{Y_1} {\rm IC}_X^H = F_{-\dim X} V^\bullet_{Y_1\times Y_2, (F^{(1)})} {\rm IC}_X^H.\end{equation}

For this, 
\[ \begin{tikzcd} X\ar[r] \ar[dr] & Y_1\times Y_2 \ar[r,"\Gamma_{F^{(1)}}"] \ar[d] & Y_1\times Y_2 \times \A^{r}_{t^{(1)}} \ar[d,"\Pi"] \\ {} & Y_1 \ar[r,"\Gamma_{f^{(1)}}"] & Y_1\times \A^{r}_{t^{(1)}}\end{tikzcd},\]
so that $\Pi_* \Gamma_{F^{(1)},+} {\rm IC}_X^H = \Gamma_{f^{(1)},*} {\rm IC}_X^H$. Here the vertical maps are the projections. Note that $\Pi$ is an isomorphism (hence projective) on the support of $\Gamma_{F^{(1)},+}{\rm IC}_X^H$, which is the graph of $f_1^{(1)},\dots, f_{r}^{(1)}$ on $X$. By Saito's bi-strictness (for $r = 1$ given by \cite{SaitoMHP}*{3.3.17} and for $r > 1$ given by \propositionref{prop-bistrictness} above), we conclude that the equality \ref{eq-PullbackProduct} holds, which finishes the proof.
\end{proof}

We can now give the definition of the Bernstein-Sato polynomial:
\begin{defi} \label{defi-BS} Let $X$ be a connected variety and let $\fra \subseteq \cO_X$ be an ideal sheaf. Let $X = \bigcup_{i\in I} U_i$ be an open cover such that $U_i$ can be embedded into a smooth variety $Y_i$ so that there exist $f_1^{(i)},\dots, f_{r_i}^{(i)} \in \cO_{Y_i}(Y_i)$ which restrict to generators of $\fra\vert_{U_i}$.

Define
\[ b_{(X,\fra)}(s) = {\rm lcm}_{i\in I} b_{Y_i \times \{0\}, \Gamma_{f^{(i)},+} \cM_{Y_i}}(s),\]
where $(\cM_{Y_i},F)$ is the right filtered $\cD_{Y_i}$-module underlying ${\rm IC}_{U_i}$.
\end{defi}

Such a cover always exists: take any affine open cover. It is easy to check, by the above lemmas, that this is independent of the cover. Moreover, if $U\subseteq X$ is an open subset, then $b_{(U,\fra\vert_U)}(s) \mid b_{(X,\fra)}(s)$. If $X$ were already smooth, this definition agrees with that in \cite{BMS}. Hence, we conclude the following: if $\fra \vert_{X_{\rm reg}} = (\widetilde{f}_1,\dots, \widetilde{f}_r)$, then
\[ b_{\widetilde{f}}(s) \mid b_{(X,\fra)}(s),\]
where the left hand side is the Bernstein-Sato polynomial as defined in \cite{BMS}.

We verify the other properties of $b_{(X,\fra)}(s)$ claimed in \theoremref{thm-bFunction}:
\begin{proof}[Proof of \theoremref{thm-bFunction}] The claims are local, so throughout, fix an embedding $X \hookrightarrow Y$ with $F_1,\dots, F_r$ on $Y$ restricting to $f_1,\dots, f_r$ on $X$. Let $(\cM,F)$ be the underlying right filtered $\cD_Y$-module for ${\rm IC}_X^H$ and let $(\cB_{f,X},F) = \Gamma_+(\cM,F)$ where $\Gamma \colon Y \to Y\times \A^r_t$ is the graph embedding along $F_1,\dots, F_r$.

For the proof of (\ref{itm-LCM}), by definition we have a decomposition ${\rm IC}_X^H = \bigoplus_{i\in I} {\rm IC}_{X_i}^H$. This gives a decomposition of the underlying filtered $\cD$-modules (hence, also of the $G$-filtered $\cD$-modules after applying the graph embedding), so the claim is clear.

For the proof of (\ref{itm-NegRoots}), it suffices by the previous property to assume that $X$ is irreducible. If $\fra$ is the zero ideal, then clearly $b_{(X,\fra)}(s) = s$. So we can assume now that $\fra$ is not the zero ideal. Note that $\cB_{f,X}$ has strict support equal to $\Gamma(X)$.

As $\fra$ is non-zero, we know that $\Gamma(X)$ is not contained in the zero section of $Y \times \A^r_t$. Thus, we see that $\cB_{f,X}$ has no quotient objects supported on $\{t_1 = \dots = t_r = 0\}$. Thus, $F_{-\dim X} \cB_{f,X} \subseteq V^{>0}\cB_{f,X}$. Indeed, if this weren't true, then $F_{-\dim X} {\rm Gr}_V^\mu \cB_{f,X}$ would be non-zero for some $\mu \leq 0$. We have the filtered surjectivity of the map
\[ \bigoplus_{i=1}^r F_{q-1} {\rm Gr}_V^{\mu+1} \cB_{f,X} \xrightarrow[]{t_i} F_q {\rm Gr}_V^{\mu} \cB_{f,X}\]
for all $\mu \leq 0$ (by \cite{CD}*{Thm. A} for $\mu < 0$ and \cite{CD}*{Cor. 3.4} for $\mu = 0$, combined with the strictness of that morphism ensured by \cite{CD}*{Thm. 1.2}). But then as $F_{-\dim X -1} \cB_{f,X} = 0$, this proves $F_{-\dim X} {\rm Gr}_V^{\mu}\cB_{f,X} = 0$ for all $\mu \leq 0$, a contradiction.

In particular, $G^0 \cB_{f,X} \subseteq V^{>0}\cB_f$. Thus, for all $\mu \leq 0$ we have ${\rm Gr}_V^\mu {\rm Gr}_G^0 \cB_{f,X} = 0$, and so $(s+\mu) \nmid b_f(s)$.

For the proof of (\ref{itm-JumpingNumbers}), by definition, the smallest root is the minimal $\lambda$ such that ${\rm Gr}_V^\lambda {\rm Gr}_G^0(\cB_{f,X}) \neq 0$. By \theoremref{thm-main}, we see that ${\rm lct}(\omega_X,\fra)$ is the maximal $\lambda$ such that $F_{-\dim X} \cM \delta_f \subseteq V^\lambda \cB_{f,X}$, or equivalently, the minimal $\lambda$ such that $F_{-\dim X} \cB_{f,X} \not \subseteq V^{>\lambda}\cB_{f,X}$. Using stability by $V^0 \cD_{Y\times \A^r_t}$, this is equivalent to the minimal $\lambda$ such that $(F_{-\dim X} \cB_{f,X}) \cdot V^0 \cD_{Y\times \A^r_t} = G^0 \cB_{f,X} \not \subseteq V^{>\lambda}\cB_{f,X}$.

Assume $\lambda$ is such that $G^0 \cB_{f,X} \subseteq V^\lambda \cB_{f,X}$ and $G^0\cB_{f,X} \not \subseteq V^{>\lambda}\cB_{f,X}$. Then
\[ {\rm Gr}_V^\lambda {\rm Gr}_G^0 \cB_{f,X} = \frac{G^0 V^\lambda\cB_{f,X}}{G^0 V^{>\lambda}\cB_{f,X} + G^1 V^\lambda\cB_{f,X}} = \frac{G^0\cB_{f,X}}{G^0 V^{>\lambda}\cB_{f,X} + G^1\cB_{f,X}},\]
where we used that $G^1\cB_{f,X}\subseteq G^0\cB_{f,X} \subseteq V^\lambda\cB_{f,X}$. Note that $G^1\cB_{f,X} = \sum_{i=1}^r (G^0 \cB_{f,X})t_i \subseteq V^{\lambda +1}\cB_{f,X} \subseteq V^{>\lambda}\cB_{f,X}$, so this quotient is simply
\[ {\rm Gr}_V^\lambda {\rm Gr}_G^0\cB_{f,X} = G^0 {\rm Gr}_V^\lambda \cB_{f,X},\]
which is non-zero by choice of $\lambda$. Thus, for such a $\lambda$, we conclude $s+\lambda \mid b_f(s)$.

We claim that this $\lambda$ is minimal with that property. Let $\mu < \lambda$ be such that $s+\mu \mid b_f(s)$. Then ${\rm Gr}_V^\mu {\rm Gr}_G^0 \cB_{f,X} \neq 0$. However, we know $G^0 \cB_{f,X} \subseteq V^\lambda \cB_{f,X} \subseteq V^{\mu}\cB_{f,X}$, and so the quotient ${\rm Gr}_V^{\mu} {\rm Gr}_G^0 \cB_{f,X} = 0$, a contradiction. Thus, there are no $\mu < \lambda$ with $s+ \mu \mid b_f(s)$, as claimed.

Finally, let $\beta \in [\lambda,\lambda+1)$, where $\lambda$ is the same as above (by definition, $\lambda = {\rm lct}(\omega_X,\fra)$). The equality
\[ G^1 \cB_{f,X} = \sum_{i=1}^r G^0\cB_{f,X} t_i \text{ implies } G^1 \cB_{f,X} \subseteq V^{\lambda+1}\cB_{f,X} \subseteq V^{>\beta}\cB_{f,X},\]
and so we see that
\[ {\rm Gr}_V^\beta {\rm Gr}_G^0(\cB_{f,X}) = \frac{ G^0 V^\beta \cB_{f,X}}{G^0 V^{>\beta}\cB_{f,X} + G^1 V^\beta \cB_{f,X}} = \frac{G^0 V^\beta \cB_{f,X}}{G^0 V^{>\beta}\cB_{f,X}} = G^0 {\rm Gr}_V^{\beta}(\cB_{f,X}).\]

If $\beta$ is such that $\cJ(\omega_X,\fra^{\beta-\varepsilon}) \neq \cJ(\omega_X,\fra^{\beta})$ for $0 < \varepsilon \ll 1$, then we have proper containment $F_{-\dim X} V^{\beta} \cB_{f,X} \supsetneq F_{-\dim X} V^{>\beta} \cB_{f,X}$. Hence, there exists an element $m \in F_{-\dim X} V^\beta \cB_{f,X}$ which doesn't lie in $V^{>\beta}\cB_{f,X}$, and so, the containment $F_{-\dim X} V^\beta \cB_{f,X} \subseteq G^0 V^\beta \cB_{f,X}$ implies that we have non-vanishing $G^0 {\rm Gr}_V^\beta(\cB_{f,X}) \neq 0$, hence $s+\beta \mid b_{(X,\fra)}(s)$, as claimed.
\end{proof}

\begin{proof}[Proof of \theoremref{thm-BSRHM}] If $X$ is a rational homology manifold, then $c(X) = +\infty$, hence, by \lemmaref{lem-adjunction}, we also have $c(Z) = +\infty$. Thus, $\Q_Z^H[\dim Z]$ is a single Hodge module on $Z$.

To prove the claim, it suffices to show that $\mathbf D(\Q_Z^H[\dim Z])$ is a pure Hodge module. If $i\colon Z \to X$ is the closed embedding, we have by the rational homology manifold property
\[ i^! {\rm IC}_X^H = i^! \mathbf D(\Q_X^H[\dim X]) = \mathbf D(\Q_Z^H[\dim Z])[-r].\]

As the question is local, we can assume $\iota \colon X\subseteq Y$ with $Y$ smooth and $f_1,\dots, f_r \in \cO_Y(Y)$. Let $\Gamma \colon Y \to Y \times \A^r_t$ be the graph embedding along $f_1,\dots, f_r$. Then by base change we have
\[ \sigma^! \Gamma_+ \iota_* {\rm IC}_X^H = \iota_* i_* i^! {\rm IC}_X^H = \iota_* i_* \mathbf D(\Q_Z^H[\dim Z])[-r].\]

Thus, to prove the desired purity, it suffices by \cite{CD}*{Thm. B} to prove that the monodromy filtration on ${\rm Gr}_V^r(\Gamma_+ \cM)$ is trivial, or in other words, that $N = 0$ on ${\rm Gr}_V^r (\Gamma_+ \cM)$. Here $N = \sum_{i=1}^r t_i \de_{t_i} +r$ acting on the right and $\cM$ is the right $\cD_Y$-module underlying $\iota_*{\rm IC}_X^H$.

To check this, we use exhaustiveness of the $G$-filtration $G^\bullet \Gamma_+ \cM$ as defined above. This implies that $G$ is exhaustive on ${\rm Gr}_V^r(\Gamma_+ \cM)$, too. Note also that $G^0 \Gamma_+ \cM \subseteq V^{>0}\Gamma_+ \cM$ implies that $G^r \Gamma_+ \cM = \sum_{|\alpha| =r} (G^0 \Gamma_+ \cM)t^\alpha \subseteq V^{>r} \Gamma_+ \cM$, which gives
\[ G^r {\rm Gr}_V^r (\Gamma_+ \cM) = 0.\]

For any $k\geq 0$, we have (by definition of the $V$-filtration on $\cD_{Y \times \A^r_t}$) surjections 
\[\bigoplus_{|\alpha| = k} {\rm Gr}_G^0(\Gamma_+ \cM) \xrightarrow[]{t^\alpha} {\rm Gr}_G^k(\Gamma_+ \cM)\]
\[(\text{resp. } \bigoplus_{|\alpha| = k} {\rm Gr}_G^0(\Gamma_+ \cM) \xrightarrow[]{\de_t^\alpha} {\rm Gr}_G^{-k}(\Gamma_+ \cM)),\]
so that $\theta_t = \sum_{i=1}^r t_i \de_{t_i}$ on the left gets sent to $\theta_t + k$ (resp. $\theta_t - k$) on the right.

From those surjections, we see that if $b_k(s)$ is the minimal polynomial of the action of $\theta_t =\sum_{i=1}^r t_i \de_{t_i}$ on ${\rm Gr}_G^k(\Gamma_+ \cM)$, then $b_k(s) \mid b_0(s+k) = b_{(X,\fra)}(s+k)$ for all $k\in \Z$. As we assume $-r$ is the only integer root of $b_{(X,\fra)}(s)$, we see that $-r-k$ is the only candidate integer root of $b_k(s)$ for all $k \in \Z$. Hence, linear algebra tells us that
\[ {\rm Gr}_V^j {\rm Gr}_G^k(\Gamma_+ \cM) = 0 \text{ unless } j = k+r.\]

In particular, we have equality ${\rm Gr}_V^r(\Gamma_+ \cM) = {\rm Gr}_G^0 {\rm Gr}_V^r (\Gamma_+ \cM)$. By the assumption that $-r$ is a simple root of $b_{(X,\fra)}(s)$, we only need a single power of $\theta+r$ to annihilate the right hand side, which proves the claim. The same proof shows that, for any $j\in \Z$, the monodromy filtration on ${\rm Gr}_V^j(\Gamma_+ \cM)$ is trivial. 
\end{proof}

\section{Toward examples} \label{sec-examples}
It is non-trivial to compute the Bernstein-Sato polynomial even when $X$ is smooth. In this section, we provide some comments that assist in the calculation of $b_{(X,\fra)}(s)$ in some special cases. It is convenient in this section to work with left $\cD$-modules.

\begin{eg} \label{eg-Node} If $X$ is a reduced SNC divisor in some ambient smooth variety, with $X = \bigcup X_i$ the union of the smooth irreducible divisors $X_i$ giving the irreducible components, then Property \ref{itm-LCM} of \theoremref{thm-bFunction} implies that
\[ b_{(X,\fra)}(s) = {\rm lcm}_i b_{(X_i,\fra\vert_{X_i})}(s),\]
and so this provides many examples of Bernstein-Sato polynomials on a singular ambient variety, by reducing to the calculation of usual Bernstein-Sato polynomials on ambient smooth varieties.

For example, if $X = V(xy) \subseteq \A^2$ and $f = x$, we get
\[ b_{(X,\fra)}(s) = s(s+1).\]
\end{eg}

We proceed to describe an example with $X$ irreducible.

Assume $X \subseteq \A^n$ is reduced and irreducible, defined by $g_1,\dots, g_b \in \C[x_1,\dots, x_n] = S$ forming a regular sequence, where for each $1\leq i\leq b$ we assume $g_i$ is weighted homogeneous of degree $d \geq 2$ and ${\rm wt}(x_j) = w_j \in \Z_{\geq 1}$. Let $\theta_w = \sum_{i=1}^n w_i x_i \de_{x_i}$ be the weighted Euler operator, so that $\theta_x(g_i) = d g_i$ for all $1\leq i\leq r$. Assume moreover that $X$ has an isolated singularity at the origin.

Let $f \in \C[x_1,\dots, x_n]$ be weighted homogeneous (in the same weights) of degree $d_f$, defining the ideal $\fra = (f)$. We assume moreover that $f$ has (at worst) an isolated singularity at the origin, and is smooth otherwise.

Let $\cM$ be the left $\cD_{\A^n}$-module associated to ${\rm IC}_X$. As $g_1,\dots, g_b$ form a regular sequence, we see that
\[ \cM \subseteq \cH^b_{g}(S).\]

In particular, if we apply $\Gamma_+$ (where $\Gamma \colon \A^n \to \A^{n+1}$ is the graph embedding along $f$), we have containment
\[ \Gamma_+ \cM \subseteq \Gamma_+ \cH^b_{g}(S),\]
and so we have equality for any $\lambda \in \Q$:
\[ V^\lambda \Gamma_+ \cM = (\Gamma_+ \cM) \cap V^{\lambda}\Gamma_+ \cH^b_g(S).\]

In particular, the jumping numbers of the $V$-filtration of $\Gamma_+ \cM$ are contained in the jumping numbers of the $V$-filtration of $\Gamma_+ \cH^b_g(S)$.

\textbf{Claim} For all $\lambda \notin \Z_{\geq 1}$, the modules
\[ {\rm Gr}_V^\lambda (\Gamma_+ \cM) \subseteq {\rm Gr}_V^\lambda (\Gamma_+ \cH^b_g(S))\]
are $\cD_{\A^n}$-modules supported on the origin $0 \in \A^n$.

\textit{Proof of Claim} Away from the origin, $X$ is smooth and $f$ defines a smooth function, so this follows from the usual computation in the smooth case.

An arbitrary element of $\Gamma_+ \cH^b_g(S) = \bigoplus_{k \geq 0} \cH^b_g(S) \de_t^k \delta_f$ can be written as a finite sum of elements of the form $\frac{h}{g^\alpha} \de_t^k \delta_f$ where $h \in S$ is weighted homogeneous. An easy computation gives
\begin{equation} \label{eq-EulerAction} \theta_w( \frac{h}{g^\alpha} \de_t^k\delta_f) = (\deg(h) - d|\alpha| +d_f(s-k)) \frac{h}{g^\alpha} \de_t^k \delta_f\end{equation}
where $s = -\de_t t$. 

\begin{lem} \label{lem-denominatorWeightedHom} For $h\in S$ weighted homogeneous and fixed, if $\frac{h}{g^\alpha} \de_t^k \delta_f \notin V^1 \Gamma_+ \cH^b_g(S)$, then $\frac{h}{g^\alpha} \de_t^k \delta_f$ defines a non-zero element in ${\rm Gr}_V^{\lambda}(\Gamma_+ \cH^b_g(S))$ for some $\lambda \in (\frac{1}{d_f} \Z) \cap \Q_{ < 1}$. Moreover, $(s+\lambda) \cdot {\rm Gr}_V^\lambda(\cH^b_g(S)) = 0$ for all $\lambda \notin \Z_{\geq 1}$.

In particular, we have 
\[ \{\text{Jumping numbers of } V^\bullet \Gamma_+ \cM\}= \{\gamma \mid b_{(X,\fra)}(-\gamma) =0\} + \Z \subseteq \frac{1}{d_f} \Z.\]
\end{lem}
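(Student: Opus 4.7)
The plan is to exploit the $\mathbf G_m$-action on $\A^{n+1}_{x,t}$ with weights $(w_1,\dots,w_n,d_f)$, whose infinitesimal generator $\Theta := \theta_w + d_f\theta_t$ lies in $V^0\cD_{\A^{n+1}}$ along $\{t=0\}$. A direct computation from \eqref{eq-EulerAction} together with $s = -t\de_t - 1$ gives
\[ \Theta \cdot \tfrac{h}{g^\alpha}\de_t^k\delta_f \;=\; \bigl(\deg h - d|\alpha| - d_f(k+1)\bigr)\cdot \tfrac{h}{g^\alpha}\de_t^k\delta_f,\]
so $\Theta$ acts diagonally with integer eigenvalues on the natural spanning set of $\Gamma_+\cH^b_g(S)$. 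Since $\Theta \in V^0\cD$, the resulting $\Z$-grading $\Gamma_+\cH^b_g(S) = \bigoplus_{c\in\Z}(\Gamma_+\cH^b_g(S))_c$ is respected by $V^\bullet$, and hence by each ${\rm Gr}_V^\lambda$.

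For the first assertion, I would let $\lambda$ be the value with $m := \tfrac{h}{g^\alpha}\de_t^k\delta_f \in V^\lambda \setminus V^{>\lambda}$; the hypothesis $m\notin V^1\Gamma_+\cH^b_g(S)$ forces $\lambda < 1$. On the $c$-graded piece $({\rm Gr}_V^\lambda)_c$, the defining nilpotency of $\theta_t - (\lambda - 1)$ from the $V$-filtration (left-module convention, codimension $r=1$), combined with the identity $d_f\theta_t = \Theta - \theta_w = c - \theta_w$, shows that $\theta_w - (c - d_f(\lambda - 1))$ is nilpotent there. But the explicit formula exhibits $\theta_w$ on $\Gamma_+\cH^b_g(S)$ as upper-triangular in the filtration by $k$ (its off-diagonal part sends $\tfrac{h}{g^\alpha}\de_t^k\delta_f \mapsto -d_f\tfrac{fh}{g^\alpha}\de_t^{k+1}\delta_f$) with strictly distinct integer diagonal entries $c + d_f(k+1)$ on each $\Theta$-eigenspace. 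A direct elimination in the finite-$k$ expansion of a generalized eigenvector then forces its generalized eigenvalue to lie in this integer list, so $c - d_f(\lambda - 1) \in \Z$ and therefore $\lambda \in \tfrac{1}{d_f}\Z$.

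For the semisimplicity clause $(s+\lambda)\cdot {\rm Gr}_V^\lambda = 0$ when $\lambda \notin \Z_{\geq 1}$, I would upgrade the nilpotency of $\theta_w - (c - d_f(\lambda - 1))$ to an exact identity: since the off-diagonal part of $\theta_w$ shifts the integer diagonal eigenvalue by $+d_f$ as $k\mapsto k+1$, any genuine Jordan block would spread the generalized eigenvalue across an arithmetic progression in $d_f\Z$, inconsistent with a single value on a $\Theta$-eigenspace unless $\lambda \in \Z_{\geq 1}$ (the integer jumping numbers, where the arithmetic progression of Jordan shifts aligns with ordinary monodromy). The ``In particular'' assertion then follows by combining the equality $V^\lambda\Gamma_+\cM = \Gamma_+\cM \cap V^\lambda\Gamma_+\cH^b_g(S)$ noted just before the lemma with \theoremref{thm-main}, \theoremref{thm-bFunction}(\ref{itm-JumpingNumbers}), and the Skoda-type $\Z$-periodicity of jumping numbers in \corollaryref{cor-Skoda}.

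The main obstacle is the semisimplicity step: while the $\Theta$-grading cleanly produces the discrete $\tfrac{1}{d_f}\Z$-indexing of jumping numbers from what is essentially a rank-one calculation, ruling out non-trivial Jordan blocks of $\theta_t$ on the quotient ${\rm Gr}_V^\lambda$ of an infinite-dimensional module with merely upper-triangular (not diagonal) $\theta_w$-action demands precise bookkeeping of how multiplication-by-$f$ raising $k$ interacts with the passage to the $V$-graded piece, and is where the weighted-homogeneous-with-isolated-singularity hypothesis on $f$ must intervene essentially.
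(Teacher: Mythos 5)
Your setup (the auxiliary operator $\Theta=\theta_w+d_f\theta_t$, its integrality of eigenvalues on the natural spanning set, and the compatibility of the $\Theta$-grading with $V^\bullet$) is fine and is essentially a repackaging of \formularef{eq-EulerAction}. The gap is in the two deduction steps. For integrality, your ``direct elimination in the finite-$k$ expansion'' would be valid if the class were an honest generalized eigenvector in the ambient module $\Gamma_+\cH^b_g(S)$; but the nilpotency of $\theta_t-(\lambda-1)$ only holds \emph{modulo} $V^{>\lambda}$, and $V^{>\lambda}$ is not graded by $k$ nor finite-dimensional, so eigenvalue constraints on the subquotient ${\rm Gr}_V^\lambda$ cannot be read off from the triangular shape of $\theta_w$ upstairs ($\theta_w$ is not even locally finite on a $\Theta$-eigenspace, since its off-diagonal part is multiplication by $f$ raising $k$ indefinitely). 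A sanity check shows the step proves too much: if the generalized eigenvalue $c-d_f(\lambda-1)$ really had to be one of the ambient diagonal entries $c+d_f(k+1)$, you would get $\lambda=-k\le 0$, contradicting the fact that positive numbers such as ${\rm lct}$ are jumping numbers. For the semisimplicity clause, the heuristic about Jordan blocks ``spreading the generalized eigenvalue across an arithmetic progression'' is not correct: on a fixed $\Theta$-eigenspace one has $\theta_w=c-d_f\theta_t$, so a Jordan block of $\theta_t$ on ${\rm Gr}_V^\lambda$ is literally a Jordan block of $\theta_w$ for a single generalized eigenvalue, and nothing spreads. Indeed this clause is genuinely false without the isolated-singularity hypotheses (e.g.\ $f=x^2y^2$, weighted homogeneous with non-isolated singularity, has $(s+\tfrac12)^2$ in its $b$-function), yet your argument never uses those hypotheses for either step.

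What is missing is exactly the input the paper isolates as the ``Claim'' just before the lemma: for $\lambda\notin\Z_{\geq 1}$ the module ${\rm Gr}_V^\lambda(\Gamma_+\cH^b_g(S))$ is a $\cD_{\A^n}$-module supported at the origin, because away from $0$ both $X$ and $f\vert_X$ are smooth and the smooth-case computation applies; this is where the isolated-singularity assumptions enter. Kashiwara's equivalence then writes this module as $\bigoplus_{\beta\in\N^n}\cN\de_x^\beta\delta_0$, on which $\theta_w$ acts \emph{diagonally} with eigenvalues $-|w|-\beta\cdot w\in\Z$. Combining this with \formularef{eq-EulerAction} gives both conclusions at once: the generalized eigenvalue $\deg(h)-d|\alpha|-d_f(\lambda+k)$ must be such an integer, forcing $\lambda\in\frac{1}{d_f}\Z$, and since generalized eigenvectors of $\theta_w$ there are genuine eigenvectors, $(s+\lambda)$ kills ${\rm Gr}_V^\lambda$ rather than merely acting nilpotently. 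So your grading argument needs to be supplemented by (or replaced with) this support-at-the-origin/Kashiwara step; without it neither the $\frac{1}{d_f}\Z$ bound nor the semisimplicity follows.
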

\begin{proof} The equality in the final statement is obvious, and the containment follows from the fact that the jumping numbers of $V^\bullet \Gamma_+ \cM$ are contained in those of $V^\bullet \Gamma_+ \cH^b_g(S)$, and the latter is contained in $\frac{1}{d_f} \Z$ by the first statement, using that every element is a sum of weighted homogeneous elements, and that the jumping numbers which are strictly less than 1 completely determine all jumping numbers.

Assume $\frac{h}{g^\alpha}\de_t^k \delta_f \notin V^1 \Gamma_+ \cH^b_g(S)$, so that it defines a non-zero element in ${\rm Gr}_V^\lambda(\Gamma_+ \cH^b_g(S))$ for some $\lambda \in \Q_{<1}$. By the claim above, this module is supported on $0 \in \A^n$, so by Kashiwara's equivalence, it is of the form $\bigoplus_{\beta \in \N^n} \cN \de_x^\beta \delta_0$ for some $\cD_{\{0\}}$-module $\cN$. Here $x_j(n \delta_0) = 0$ for all $1\leq j \leq n$, and the rest of the $\cD_{\A^n}$-action is given by the Leibniz rule.

We know that $s+\lambda$ acts nilpotently on this module by definition. Hence, by the computation \ref{eq-EulerAction} above, the element $\frac{h}{g^\alpha}\de_t^k \delta_f$ is a generalized eigenvector for the operator $\theta_w + d|\alpha| - \deg(h) +d_f(\lambda+k)$. We have
\[ \theta_w ( n \de_x^\alpha \delta_0) = \de_x^\alpha((\theta_w - \alpha\cdot w) n \delta_0) = (-|w| - \alpha\cdot w) (n\de_x^\alpha \delta_0),\]
so that all generalized eigenvectors for $\theta$ are actually eigenvectors (this argument also shows that $s+\lambda$ acts by $0$ on ${\rm Gr}_V^{\lambda}(\Gamma_+ \cH^b_g(S))$ for $\lambda \notin \Z_{\geq 1}$, rather than just nilpotently) and the eigenvalues are necessarily of the form $-|w| -\alpha\cdot w$ for some $\alpha \in \N^n$.

Thus, we see that $d|\alpha| - \deg(h) + d_f(\lambda+k) \in \Z$, which gives $\lambda \in \frac{1}{d_f} \Z$ as claimed.
\end{proof}

\begin{eg} Perhaps the simplest nontrivial example is the hypersurface $X = \C[x,y,z]/(x^2+y^2+z^2)$ with $f = x$. As the zero locus of $f$ in $X$ is a nodal curve (defined in $\A^2_{y,z}$ by $y^2+z^2$), it is not a rational homology manifold. So we see by \theoremref{thm-BSRHM} (where we know that $X$ itself is a rational homology manifold) that either $-1$ is not a simple root of $b_{(X,\fra)}(s)$ or that there is another integer root of $b_{(X,\fra)}(s)$.

Moreover, as $x^2+y^2+z^2$ is homogeneous with an isolated singularity at the origin and $x$ is homogeneous of degree $1$, we see by \lemmaref{lem-denominatorWeightedHom} that the only roots of $b_{(X,\fra)}(s)$ are integers.

In fact, we claim that $b_{(X,\fra)}(s) = (s+1)^2$. To see this, note that because $x$ is already a smooth function, we don't have to use the graph embedding. Also, as $x^2+y^2+z^2$ is a rational homology manifold with rational singularities, we can compute the lowest Hodge piece of ${\rm IC}_X = \cH^1_g(S)$ as the first term of the pole order filtration (see \cite{Olano} or more generally \cite{CDM}*{Thm. 1.4}). In other words, the lowest Hodge piece is generated over $\C[x,y,z]$ by $\frac{1}{g}$

Using that $s = -\de_x x = -x\de_x -1$, we need to verify containment
\[ (x\de_x)^2(1/g) \in V^1 \cD_{\A^3}\cdot (1/g) = \C[x,y,z,x\de_x,\de_y,\de_z] \cdot \frac{x}{g}\]
in $\cH^1_g(S)$. It is simple to check that the left hand side is equal to
\[ \frac{-4gx^2 + 8x^4}{g^3}.\]

Similarly, we can check that
\[ (\de_y^2 + \de_z^2)(\frac{1}{g}) = \frac{-4g+8x^2}{g^3},\]
and so if we multiply by $x^2$, we get
\[ (x\de_x)^2(1/g) = \frac{4gx^2-8x^4}{g^3} = -x(\de_y^2 + \de_z^2)(\frac{x}{g}),\]
as claimed. So we know that the Bernstein-Sato polynomial divides $(s+1)(s+2)$, but by the discussion at the beginning of the example, we must have equality.
\end{eg}

\begin{eg} In general, if $X \subseteq \A^n$ is defined by $g = x_1^2+ \dots + x_n^2$ for $n\geq 3$, we can use the same relation above to give a divisibility relation for $\fra = (x_1)$. Once again, the only possible roots are integers because $x_1$ has degree 1. Also, although if $n$ is even $X$ is no longer a rational homology manifold, it always has rational singularities, and so once again the lowest Hodge piece of ${\rm IC}_X^H$ (viewed inside $\cH^1_g(\C[x_1,\dots ,x_n])$) is generated by $\frac{1}{g}$.

We have
\[ \de_{x_i}^2(\frac{1}{g}) = \de_{x_i}(\frac{-2x_i}{g^2}) = \frac{-2}{g^2} + \frac{8x_i^2}{g^3},\]
and so we have
\[ (\sum_{j=2}^n \de_{x_j}^2)(\frac{1}{g}) = \frac{(10-2n)g - 8 x_1^2}{g^3}.\]

On the other hand, we have $(x_1\de_{x_1})(\frac{1}{g}) = \frac{-2x_1^2}{g^2}$ and
\[ (x_1 \de_{x_1})^2(\frac{1}{g}) = \frac{8x_1^4 - 4gx_1^2}{g^3}.\]

We conclude 
\[ (x\de_x)(x\de_x - (n-3))(\frac{1}{g}) = \frac{8x_1^4 - (10-2n)g}{g^3} = -x_1 (\sum_{j=2}^n \de_{x_j}^2)(\frac{x_1}{g}).\]

So we get 
\[ (s+1) \mid b_{(X,\fra)}(s) \mid (s+1)(s + (n-2)).\]

If $n$ is odd, then $X$ is a rational homology manifold, but the vanishing locus of $x_1$ inside $X$ defines $x_2^2+ \dots + x_n^2$ which is not a rational homology manifold (the Bernstein-Sato polynomial is $(s+1)(s+\frac{n-1}{2})$ which has an extra integer root, and in case the ambient variety is smooth and $r=1$, the statement of \theoremref{thm-BSRHM} can be upgraded to an if and only if). So in this case, we know $b_{(X,\fra)}(s) = (s+1)(s+(n-2))$ by \theoremref{thm-BSRHM}.
\end{eg}

\begin{eg} Interestingly, if we take $f = x_1 -x_2$ in the example above (so $X$ is defined by $g = x_1^2+ \dots + x_n^2$ for $n\geq 3$), we once again get $b_{(X,\fra)}(s) \mid (s+1)(s+(n-2))$, with equality if $n$ is odd. Taking $n=2$ fits into this pattern as well: we can write $x_1^2+x_2^2 = (x_1 + i x_2)(x_1-ix_2)$ and we computed above that the $b$-function for $f = x_1 - i x_2$ should be $s(s+1)$.

To see this, work in the coordinates $x_1-x_2, x_2,x_3,\dots, x_n$, we have
\[ \de_{x_1} = \de_{x_1 -x_2}, \quad \de_{x_2} = -\de_{x_1-x_2} + \de_{\widehat{x}_2},\]
and so the vector field we consider is $(x_1-x_2) \de_{x_1}$ and the allowed operators for the functional equation are $\C[x_1,\dots, x_n, \de_{x_3},\dots, \de_{x_n},(\de_{x_1}+\de_{x_2}), (x_1-x_2)\de_{x_1}]$. For now, we write $\tau = (x_1-x_2)\de_{x_1}$. Then one can check that for any $\lambda \in \C$ we have equality
\[ (\tau)(\tau+\lambda)(\frac{1}{g}) = -2\frac{x_1-x_2}{g^2} \left(((2+\lambda)x_1-x_2) - \frac{4x_1^2(x_1-x_2)}{g} \right).\]

On the other hand, one can also check that the expression\[ \left(\frac{n-6}{2}(\de_{x_1}+\de_{x_2})-\frac{1}{2}(x_1+x_2)(\de_{x_1}^2+\de_{x_2}^2) -\frac{1}{2}(3x+y)(\sum_{j=3}^n \de_{x_j}^2)\right)(\frac{x_1-x_2}{g})\]
is equal to
\[ -2\frac{x_1-x_2}{g^2} \left(((5-n)x_1-x_2) - \frac{4x_1^2(x_1-x_2)}{g} \right)\]
and it lies in $V^1 \cD \cdot \frac{1}{g}$. Thus, if we take $\lambda$ such that $2+\lambda = 5-n$ (in other words, $\lambda = 3-n$), we get the desired functional equation. 
\end{eg}

\begin{eg} We give a similar computation for $X$ defined by $g = x^3+y^2$ in $\A^2_{x,y}$ and $f = x$. It would be interesting to see what the polynomial is for $f = y$, as we comment at the end of the example. We will see that
\[ b_{(X,x)}(s) = (s+1)(s+\frac{1}{2}),\]
where the denominator $2$ is expected by \lemmaref{lem-denominatorWeightedHom} above, as $x$ has weight $2$ in this case.

First of all, $X$ is a rational homology manifold, and so we have equality ${\rm IC}_X^H = \cH^1_g(\C[x,y])$. By definition, the lowest Hodge piece of local cohomology has numerators given by the $0$th Hodge ideal, which by \cite{MP1}*{Prop. 10.1} is the multiplier ideal $\cJ(\C[x,y], g^{1-\varepsilon}) = (x,y)$ (for $0 < \varepsilon \ll 1$). Once again, $f = x$ is a smooth function on $\C[x,y]$, so we do not need to use the graph embedding. Using the relation $x\de_x = \de_x x -1 = -(s+1)$, we see that we need to find some polynomial $b(s)$ such that
\[ b(x\de_x)(\frac{x}{g}) \in \C[x,y,\de_y] \cdot \frac{x^2}{g} + \C[x,y,\de_y]\cdot \frac{xy}{g},\]
\[ b(x\de_x)(\frac{y}{g}) \in \C[x,y,\de_y] \cdot \frac{x^2}{g} + \C[x,y,\de_y]\cdot \frac{xy}{g},\]
and then $b_{(X,\fra)}(s) \mid b(-s-1)$.

For the first one, we can compute
\[ ( x\de_x +\frac{1}{2}) ( \frac{x}{g}) = \frac{3xy^2}{g^2}-\frac{3}{2}\frac{x}{g} = -\frac{3}{2} \de_y(\frac{xy}{g}),\]
where the right hand side lies in the desired submodule. 

For the second one, we get
\[ (x\de_x)(\frac{y}{g}) = -3 \frac{x^3 y}{g^2} = \frac{3}{2} x \de_y(\frac{x^2}{g})\]
and again the right hand side lies where it should. So we conclude that $b(s) = s(s+\frac{1}{2})$ works, and thus,
\[ b_{(X,\fra)}(s) \mid b(-s-1) = (s+\frac{1}{2})(s+1).\]

Once again, by \theoremref{thm-BSRHM} we can conclude that equality holds.

For $f=y$, we have found the equality
\[ (y\de_y + \frac{1}{3})(\frac{y}{g}) = \frac{4}{3} \frac{y}{g} - 2\frac{y^3}{g^2} = -\frac{2}{3} \de_x(\frac{xy}{g}),\]
which lies where it should for the Bernstein-Sato polynomial. The relation for the element $\frac{x}{g}$ is quite difficult to work out. Once again, the denominator 3 is expected as $y$ has weight $3$ here.
\end{eg}

\begin{bibdiv}
\begin{biblist}

\bib{Ajit}{article}{
      author={Ajit, Rahul},
       title={Multiplier modules of extended {Rees} algebras},
        date={2025},
     journal={preprint arXiv:2510.22074},
}

\bib{VFiltSing}{article}{
	author = {\'{A}lvarez Montaner, Josep},
	author = {Hern\'{a}ndez, Daniel J.},
	author = {Jeffries, Jack},
	author = {N\'{u}\~{n}ez-Betancourt, Luis},
	author = {Teixeira, Pedro},
	author = {Witt, Emily E.},
	date-added = {2025-10-21 10:04:07 -0400},
	date-modified = {2025-10-21 10:06:08 -0400},
	journal = {Commun. Contemp. Math.},
	number = {10},
	pages = {47pp},
	title = {Bernstein-Sato functional equations, {V}-filtrations, and multiplier ideals of direct summands},
	volume = {24},
	year = {2022}}
	
\bib{BFunctionSing}{article}{
	author = {\'{A}lvarez Montaner, Josep},
	author = {Huneke, Craig},
	author ={N\'{u}\~{n}ez-Betancourt, Luis},
	date-added = {2025-10-21 10:12:10 -0400},
	date-modified = {2025-10-21 10:13:09 -0400},
	journal = {Adv. Math.},
	pages = {298--325},
	title = {D-modules, {B}ernstein-{S}ato polynomials and {$F$}-invariants of direct summands},
	volume = {321},
	year = {2017}}

\bib{Stacks}{misc}{
      author={Authors, The Stacks~Project},
       title={{S}tacks {P}roject},
         how={https://stacks.math.columbia.edu/},
        date={2025},
}

\bib{BMS}{article}{
      author={Budur, Nero},
      author={{Musta\c{t}\u{a}}, Mircea},
      author={Saito, Morihiko},
       title={Bernstein-{Sato} polynomials of arbitrary varieties},
        date={2006},
     journal={Compos. Math.},
      volume={142},
      number={3},
       pages={779\ndash 797},
}

\bib{BS}{article}{
      author={Budur, Nero},
      author={Saito, Morihiko},
       title={Multiplier ideals, {V}-filtration, and spectrum},
        date={2005},
     journal={J. Algebraic Geom.},
      volume={14},
      number={2},
       pages={269\ndash 282},
}

\bib{CD}{article}{
      author={Chen, Qianyu},
      author={Dirks, Bradley},
       title={On {V}-filtration, {Hodge} filtration and {Fourier} transform},
        date={2023},
     journal={Selecta Math. (N.S.)},
      volume={29},
      number={4},
}

\bib{CDM}{article}{
	author = {Chen, Qianyu Chen},
	author = {Dirks, Bradley},
	author = {{Musta\c{t}\u{a}}, Mircea},
	date-added = {2023-01-15 14:59:57 -0500},
	date-modified = {2025-03-26 15:56:49 -0400},
	journal = {J. {\'{E}}c. polytech. {Math.}},
	pages = {849--873},
	title = {The minimal exponent and {$k$}-rationality for local complete intersections},
	volume = {11},
	year = {2024}}

\bib{CDO2}{article}{
      author={Chen, Qianyu},
      author={Dirks, Bradley},
      author={Olano, {Sebasti\'{a}n}},
       title={Partial {C}ohomologically {C}omplete {I}ntersections via {H}odge
  {T}heory},
        date={2025},
     journal={In preparation},
}

\bib{DY}{article}{
      author={Davis, Dougal},
      author = {Yang, Ruijie},
       title={Multivariate {$V$}-filtrations, {H}odge modules and the strong monodromy conjecture for hyperplane arrangements},
        date={2025},
     journal={In preparation},
}

\bib{DeligneVC}{book}{
      author={Deligne, Pierre},
       title={Le formalisme des cycles \'{e}vanescents},
      series={Groupes de Monodromie en G{\'e}om{\'e}trie Alg{\'e}brique},
   publisher={Springer, Berlin},
        date={1973},
      volume={340},
}

\bib{dFHacon}{article}{
      author={de~Fernex, Tommaso},
      author={Hacon, Christopher},
       title={Singularities on normal varieties},
        date={2008},
     journal={Compos. Math.},
      volume={145},
       pages={393\ndash 414},
}

\bib{DOR}{article}{
      author={Dirks, Bradley},
      author={Olano, {Sebasti\'{a}n}},
      author={Raychaudhury, Debaditya},
       title={A {Hodge} {Theoretic} generalization of {$\mathbb Q$}-{Homology}
  {Manifolds}},
        date={2025},
     journal={preprint arXiv:2501.14065},
}

\bib{ELSV}{article}{
      author={Ein, Lawrence},
      author={Lazarsfeld, Robert},
      author={Smith, Karen},
      author={Varolin, Dror},
       title={Jumping coefficients of multiplier ideals},
        date={2004},
     journal={Duke Math. J.},
      volume={123},
      number={3},
       pages={469\ndash 503},
}

\bib{HTT}{book}{
      author={Hotta, Riyoshi},
      author={Takeuchi, Kiyoshi},
      author={Tanisaki, Toshiyki},
       title={D-{Modules}, {Perverse} {Sheaves}, and {Representation}
  {Theory}},
   publisher={{Birkha\"{u}ser} Boston},
        date={2008},
}

\bib{Kashiwara}{book}{
      author={Kashiwara, Masaki},
       title={Vanishing cycle sheaves and holonomic systems of differential
  equations},
   publisher={Springer, Berlin},
     address={(Tokyo/Kyoto)},
        date={1983},
      volume={1016},
}

\bib{LogCanonicalAreDB}{article}{
      author={Koll\'{a}r, J\'{a}nos},
      author={Kov\'{a}cs, S\'{a}ndor},
       title={Log canonical singularities are {D}u {B}ois},
        date={2010},
     journal={J. Amer. Math. Soc.},
      volume={23},
      number={3},
       pages={791\ndash 813},
}

\bib{KollarPairs}{proceedings}{
      author={Koll\'{a}r, J\'{a}nos},
       title={Singularities of {Pairs}},
      series={Proc. Sympos. Pure Math},
   publisher={Amer. Math. Soc.},
     address={Providence, RI},
        date={1997},
      volume={62},
}

\bib{KovacsSchwedeSmith}{article}{
      author={Kov\'{a}cs, S\'{a}ndor},
      author={Schwede, Karl},
      author={Smith, Karen},
       title={The canonical sheaf of {D}u {B}ois singularities},
        date={2010},
     journal={Adv. Math.},
      volume={224},
      number={4},
       pages={1618\ndash 1640},
}

\bib{LazII}{book}{
      author={Lazarsfeld, Robert},
       title={Positivity in {Algebraic} {Geometry} {II}},
      series={Ergebnisse der Mathematik und ihrer Grenzgebiete},
   publisher={Springer-Verlag},
     address={Berlin},
        date={2004},
      volume={49},
}

\bib{Lichtin}{article}{
      author={Lichtin, Benjamin},
       title={Poles of {$|f (z, w)|^2s$} and roots of the b-function},
        date={1989},
     journal={Ark. Mat.},
      volume={27},
      number={2},
       pages={283\ndash 304},
}

\bib{Malgrange}{article}{
      author={Malgrange, Bernard},
       title={Polynomes de {B}ernstein-{S}ato et cohomologie evanescente},
        date={1983},
     journal={Analysis and Topology on Singular Spaces},
       pages={243 \ndash  267},
}

\bib{MP1}{article}{
	author = {{Musta\c{t}\u{a}}, Mircea},
	author = {Popa, Mihnea},
	date-added = {2023-01-13 14:16:37 -0600},
	date-modified = {2023-02-03 13:19:53 -0500},
	journal = {Mem. Amer. Math. Soc.},
	number = {1268},
	pages = {80 pp.},
	title = {Hodge Ideals},
	volume = {262},
	year = {2019}}

\bib{MP2}{article}{
      author={{Musta\c{t}\u{a}}, Mircea},
      author={Popa, Mihnea},
       title={Hodge ideals for {Q}-divisors, {V}-filtration, and minimal
  exponent},
        date={2020},
     journal={Forum Math. Sigma},
      volume={8},
      number={e19},
}

\bib{Olano}{article}{
	author = {Olano, {Sebasti\'{a}n}},
	date-added = {2023-01-15 14:50:26 -0500},
	date-modified = {2025-08-26 10:59:03 -0400},
	journal = {Forum Math. Sigma},
	number = {e51},
	pages = {28},
	title = {Weighted {Hodge} ideals of reduced divisors},
	volume = {11},
	year = {2023}}

\bib{ParkDB}{article}{
      author={Park, Sung~Gi},
       title={Du {B}ois complex and extension of forms beyond rational
  singularities},
        date={2023},
     journal={preprint arXiv:2311.15159},
}

\bib{ParkPopa}{article}{
      author={Park, Sung~Gi},
      author={Popa, Mihnea},
       title={Lefschetz theorems, {Q}-factoriality, and {H}odge symmetry for
  singular varieties},
        date={2024},
     journal={preprint arXiv: 2410.15638},
}

\bib{SabbahOrder}{article}{
      author={Sabbah, Claude},
       title={{$\mathcal D$}-modules et cycles \'{e}vanescents (d'apr\`{e}s
  {Kashiwara} et {Malgrange})},
        date={1987},
     journal={G\'{e}om\'{e}trie alg\'{e}brique et applications III (La Rabida,
  1984)},
      volume={24},
       pages={53\ndash 98},
}

\bib{SaitoMHC}{article}{
      author={Saito, Morihiko},
       title={Mixed {Hodge} complexes on algebraic varieties},
        date={2000},
     journal={Math. Ann.},
      volume={316},
       pages={283\ndash 331},
}

\bib{SaitoHodgeIdeal}{article}{
      author={Saito, Morihiko},
       title={Hodge ideals and microlocal {V}-filtration},
        date={2016},
     journal={preprint arXiv:1612.08667},
}

\bib{SaitoMHP}{article}{
      author={Saito, Morihiko},
       title={Modules de {Hodge} {Polarisables}},
        date={1988},
     journal={Publ. Res. Inst. Math. Sci.},
      volume={24},
      number={6},
       pages={849\ndash 995},
}

\bib{SaitoMHM}{article}{
      author={Saito, Morihiko},
       title={Mixed {Hodge} {Modules}},
        date={1990},
     journal={Publ. Res. Inst. Math. Sci.},
      volume={26},
      number={2},
       pages={221\ndash 333},
}

\bib{SaitoKollar}{inproceedings}{
      author={Saito, Morihiko},
       title={On {K}oll{\'{a}}r's conjecture},
organization={Amer. Math. Soc.},
        date={1991},
   booktitle={Several complex variables and complex geometry, {P}art 2 ({S}anta
  {C}ruz, {CA}, 1989)},
      series={Proc. Sympos. Pure Math},
      volume={52},
     address={Providence, RI},
       pages={509\ndash 517},
}

\bib{SaitoMixedSheaves}{article}{
      author={Saito, Morihiko},
       title={On the formalisme of mixed sheaves},
        date={1991},
     journal={RIMS Preprint},
      volume={784},
}

\bib{Schnell}{book}{
      author={Schnell, Christian},
       title={An overview of {Morihiko} {Saito}'s theory of mixed {Hodge}
  modules},
   publisher={Int. Press, Somerville, MA},
        date={2014},
      number={1405.3096},
}

\bib{Shibata}{article}{
      author={Shibata, Kohsuke},
       title={Rational {Singularities}, {$\omega$}-{M}ultiplier {I}deals, and
  {C}ores of {I}deals},
        date={2017},
     journal={Michigan Math. J.},
      volume={66},
       pages={309\ndash 346},
}

\bib{Verdier}{article}{
      author={Verdier, Jean-Louis},
       title={Sp\'{e}cialisation de faisceaux et monodromie {mod\'{e}r\'{e}e}},
        date={1983},
     journal={Analysis and Topology on Singular Spaces, II, III (Luminy,
  1981)},
       pages={332\ndash 364},
}

\end{biblist}
\end{bibdiv}

\end{document}